\documentclass[11pt]{article}
\usepackage{graphicx,epsfig}
\usepackage{fancyhdr,fancybox,float}
\usepackage[title]{appendix}
\usepackage{indentfirst}
\usepackage{verbatim}
\usepackage[sort&compress, numbers]{natbib}
\usepackage{geometry}
\usepackage{extarrows,chemarrow,xypic}
\usepackage{color}
\usepackage{caption}
\usepackage{bbding}
\usepackage{microtype}
\usepackage{amssymb}
\usepackage{makecell}
\usepackage{booktabs}
\usepackage{arydshln}
\usepackage{multirow}
\usepackage{array}
\usepackage{mathtools}
\DisableLigatures[f]{encoding = *, family = *}
\usepackage{times}

\usepackage{amsthm}
\usepackage{mathrsfs}
\usepackage{bm}
\usepackage{bbm}
\usepackage{hyperref}

\usepackage{subfigure}
\usepackage[T1]{fontenc}
\usepackage{float}

\newcommand{\paperfont}{\fontsize{10pt}{1.1\baselineskip}\selectfont}
\definecolor{blue}{RGB}{0,50,200}
\definecolor{red}{RGB}{170,33,44}

\begin{document}

\theoremstyle{definition}
\makeatletter
\thm@headfont{\bf}
\makeatother
\newtheorem{definition}{Definition}
\newtheorem{example}{Example}
\newtheorem{theorem}{Theorem}
\newtheorem{lemma}{Lemma}
\newtheorem{corollary}{Corollary}
\newtheorem{remark}{Remark}
\newtheorem{proposition}{Proposition}

\lhead{}
\rhead{}
\lfoot{}
\rfoot{}

\renewcommand{\refname}{References}
\renewcommand{\figurename}{Figure}
\renewcommand{\tablename}{Table}
\renewcommand{\proofname}{Proof}
\def\code#1{\texttt{#1}}

\newcommand{\diag}{\mathrm{diag}}
\newcommand{\tr}{\mathrm{tr}}
\newcommand{\re}{\mathrm{Re}}
\newcommand{\one}{\mathbbm{1}}
\newcommand{\Pnum}{\mathbb{P}}
\newcommand{\Enum}{\mathbb{E}}
\newcommand{\Rnum}{\mathbb{R}}
\newcommand{\Nnum}{\mathbb{N}}
\newcommand{\Znum}{\mathbb{Z}}
\newcommand{\dnum}{\mathrm{d}}
\newcommand{\hyper}{{}_2F_1}
\newcommand{\confl}{{}_1F_1}

\title{\textbf{Long-time behavior of reducible stochastic linear reaction networks: a spectral and structural classification}}
\author{Bingjie Wu$^{1}$, \quad Hao Kang$^{1}$, \quad Chen Jia$^{2,3,*}$\\
\footnotesize $^1$ Center for Applied Mathematics and KL-AAGDM, Tianjin University, Tianjin 300072, China. \\
\footnotesize $^2$ School of Mathematics, Harbin Institute of Technology, Harbin 150001, China. \\
\footnotesize $^3$ Beijing Research Institute, Harbin Institute of Technology, Beijing 100144, China. \\
\footnotesize $^*$ Correspondence: chenjia@hit.edu.cn}
\date{}
\maketitle
\thispagestyle{empty}

\paperfont
\captionsetup{font=footnotesize}

{\abstract Exponential ergodicity of stochastic reaction networks has attracted considerable attention in recent years [SIAM J. Appl. Dyn. Syst. 24, 1668-1710 (2025)]. Here, we provide a structural classification of the long-time behavior of reducible stochastic linear reaction networks under the $L^1$-Wasserstein and total variation distances. The classification is determined by the maximal eigenvalue $\lambda_{\max}$ of the first-order influx matrix $A$, the position of the zero-order influx vector $b$ relative to the left nullspace of $A$, and the conservation-law structure of the network restricted to the persistent species. We first prove that every stochastic linear reaction network is non-explosive. In the stable regime $\lambda_{\max}<0$, the process converges exponentially fast to a unique stationary distribution. In the critical regime $\lambda_{\max}=0$, when the zero eigenvalue of $A$ is semisimple and $b$ is orthogonal to the left nullspace of $A$, exponential convergence occurs if and only if a regularity condition on the restricted network is satisfied. If these two spectral conditions hold but the regularity condition fails, convergence occurs only in total variation and is non-exponential. In the divergent regimes, no closed irreducible positive recurrent class can contain an interior state, and any stationary distribution, if it exists, must be supported on the boundary. Moreover, by employing a coupling method, we obtain the optimal convergence rate in the exponentially convergent cases.}

\section{Introduction}\label{sec:introduction}
Stochastic chemical reaction networks provide a standard mesoscopic framework for describing biochemical and intracellular processes, in which molecular copy numbers evolve according to a continuous-time Markov chain on a high-dimensional nonnegative integer lattice \cite{anderson2015stochastic}. They have been widely used in chemistry, molecular biology, systems biology, synthetic biology, infectious disease dynamics, and ecology, particularly in settings where intrinsic stochastic fluctuations and low-copy-number effects cannot be neglected. When molecule numbers become sufficiently large, stochastic fluctuations can often be neglected according to the law of large numbers, and the evolution of reaction networks can be approximated by deterministic reaction-rate equations \cite{kampen1961power, kurtz1978strong}. The systematic study of the deterministic and stochastic dynamics of reaction networks, together with their relationships to network topology, constitutes the foundation of modern reaction network theory \cite{horn1972general, feinberg1972complex, feinberg1987chemical, craciun2013persistence}.

Linear reaction networks form an important subclass in which every reaction is either zeroth-order or involves a single reactant molecule \cite{xu2026first}. This subclass includes monomolecular reaction networks \cite{gans1960open, darvey1966stochastic} and many important models of stochastic gene expression \cite{paulsson2005models} as special cases. Owing to their affine propensity functions, a number of exact analytical results have been obtained for stochastic linear reaction networks. Under mild conditions, it is well known that a closed monomolecular reaction network, consisting only of reactions of the form $S_i\rightarrow S_j$, admits a multinomial stationary distribution, whereas an open monomolecular reaction network, consisting of synthesis reactions $\varnothing\rightarrow S_i$, degradation reactions $S_i\rightarrow \varnothing$, and conversion reactions $S_i\rightarrow S_j$, admits a product-form Poisson stationary distribution \cite{kelly1979reversibility, gadgil2005stochastic, heuett2006grand, jahnke2007solving}. These results have been further extended to general stochastic reaction networks that are complex balanced. In fact, the stationary distribution of a complex-balanced stochastic reaction network is also given by a product-form Poisson-like distribution \cite{anderson2010product, cappelletti2016product, pascual2022algebraic}. When complex balance is not satisfied, exact stationary distributions have been analytically derived for hierarchic and more general first-order reaction networks \citep{reis2018general, li2021steady}. In the context of stochastic gene expression, the stationary distributions of mRNA and protein copy numbers have been exactly characterized in terms of hypergeometric functions \cite{peccoud1995markovian, shahrezaei2008analytical, bokes2012exact, zhou2012analytical, pendar2013exact, jia2023analytical}.

Although many analytical results have been obtained for stochastic linear reaction networks, they do not address the existence of a unique stationary distribution (positive recurrence) or whether convergence to the stationary distribution occurs at an exponential rate (exponential ergodicity). The Foster-Lyapunov theory provides a fundamental framework for establishing these properties for continuous-time Markov chains \cite{meyn1993stability, down1995exponential}. In the reaction-network setting, structural criteria for positive recurrence and exponential ergodicity have been recently developed \cite{anderson2018some, briat2020ergodicity, anderson2022mixing, anderson2025new}. In particular, for linear and asymptotically linear reaction networks, exponential ergodicity has been established under irreducibility assumptions \cite{gupta2014scalable, cappelletti2025stability}; exponential ergodicity has also been obtained for first-order reaction networks satisfying an endotactic structural condition \cite{xu2026exponential}. However, existing results on exponential ergodicity of linear reaction networks rely on additional irreducibility or structural assumptions, and a complete theoretical classification of the long-time behavior of reducible systems remains unavailable. Moreover, most existing studies on positive recurrence and exponential ergodicity of reaction networks are based on convergence in the total variation distance. Results on convergence in the Wasserstein distance and the optimal convergence rate remain largely unexplored. Compared with the total variation distance, the Wasserstein distance is more closely connected to optimal transport theory and is better suited for characterizing sharp exponential convergence rates \cite{chen2020limit, gaillard2026quantitative}.

In the present paper, we provide a structural classification of the long-time convergence behavior of reducible stochastic linear reaction networks under both the $L^1$-Wasserstein and total variation distances. Our classification is based on (i) the spectrum of the first-order influx matrix $A$, (ii) the position of the zero-order influx vector $b$ relative to the left nullspace of $A$, and (iii) whether every left null vector of $A$ corresponds to a conservation law when the reaction network is restricted to the set of persistent species. Based on these features, we identify the conditions characterizing three types of long-time behaviors: (i) exponential convergence to a stationary limiting distribution, (ii) non-exponential convergence to a stationary limiting distribution, and (iii) the absence of positive recurrent states in the interior of the state space. Moreover, using coupling arguments, we obtain the optimal convergence rate in the exponentially convergent cases. Finally, we present a variety of examples, including monomolecular reaction systems, gene expression systems, birth-death processes, and other representative models, to illustrate the main results.

The structure of the present paper is organized as follows. In Section~\ref{sec:prelim}, we recall the deterministic and stochastic dynamics of linear reaction networks and characterize the long-time behavior of the deterministic system. Section~\ref{sec:main} presents the main results of the paper. Specifically, we prove that stochastic linear reaction networks are non-explosive and provide a structural classification of the long-time convergence behavior of reducible stochastic linear reaction networks under the $L^1$-Wasserstein and total variation distances. The detailed proofs of these results are given in Sections~\ref{sec:stable_proof}-\ref{sec:critical_irregular_proof}. Finally, Section~\ref{sec:conclusion} summarizes the conclusions and discusses directions for future research.

\section{Model}\label{sec:prelim}
Throughout the paper, we denote by $\mathbb{N}=\{0,1,2,\cdots\}$ the set of non-negative integers, $\mathbb{R}_{\ge0}$ the set of non-negative real numbers, and $\mathbb{C}$ the set of complex numbers. The real part of a complex number $x\in\mathbb{C}$ is denoted by $\operatorname{Re}(x)$. For a real vector $v$, let $\|v\|_1=\sum_i|v_i|$ denote its standard $\ell^1$-norm. The notation $v>0$ ($v\geq0$) means that every component of $v$ is strictly positive (non-negative).

\subsection{Linear reaction networks}\label{subsec:first_order}
Consider a chemical reaction system consisting of $N$ chemical species, $S_1,\cdots,S_N$, and $M$ chemical reactions
\begin{equation*}
R_j:\;\mu_1^jS_1+\cdots+\mu_N^jS_N\rightarrow\nu_1^jS_1+\cdots+\nu_N^jS_N,\quad j = 1,\cdots,M,
\end{equation*}
where $\mu_i^j$ and $\nu_i^j$ denote the numbers of molecules of species $S_i$ consumed and produced, respectively, in a single occurrence of reaction $R_j$. For simplicity, we write $\mu_j=(\mu_1^j,\cdots,\mu_N^j)\in\Nnum^N$ and $\nu_j=(\nu_1^j,\cdots,\nu_N^j)\in\Nnum^N$, which are referred to as the \emph{source and product complexes}, respectively. Source and product complexes are collectively referred to as chemical complexes, or simply \emph{complexes}. The vector $w_j=\nu_j-\mu_j$ is called the \emph{reaction vector} of $R_j$, and the integer $\|\mu_j\|_1=\sum_{i=1}^N\mu_i^j$ is called the \emph{order} of $R_j$.

\begin{definition}
Let $\mathcal{S}=\{S_i\}_{i=1,\cdots,N}$ be the collection of all chemical species, let $\mathcal{C}=\{\mu_j,\nu_j\}_{j=1,\cdots,M}$ be the collection of all complexes, and let $\mathcal{R}=\{R_j\}_{j=1,\cdots,M}$ be the collection of all reactions. The triplet $(\mathcal{S},\mathcal{C},\mathcal{R})$ is referred to as a \emph{chemical reaction network}. A reaction network is called \emph{linear} or \emph{first-order} if it consists only of zero-order and first-order reactions, that is, if $\|\mu_j\|_1=0$ or $1$ for all $j=1,\cdots,M$.
\end{definition}

\begin{definition}\label{subspace}
The vector space
\begin{equation*}
\mathcal{P}=\textrm{span}(w_1,\cdots,w_M)
\end{equation*}
spanned by all reaction vectors is called the \emph{stoichiometric subspace} of the reaction network. For any $x\in\mathbb{R}_{\geq0}^N$, the set
\begin{equation*}
\mathcal{P}_x=(x+\mathcal{P})\cap\mathbb{R}_{\geq0}^N
\end{equation*}
is called the \emph{stoichiometric compatibility class} containing $x$.
\end{definition}

Concretely, a linear reaction network can be written in the following general form:
\begin{equation}\label{eq:first_order_system}
\begin{split}
R_{0j}:&\; \varnothing\xrightarrow{k_{0j}}\nu^{0j}_1S_1+\nu^{0j}_2S_2+\cdots+\nu^{0j}_N S_N, \quad j = 1,\cdots, r_0,\\
R_{ij}:&\; S_i\xrightarrow{k_{ij}}\nu^{ij}_1S_1+\nu^{ij}_2S_2+\cdots+\nu^{ij}_NS_N, \quad i = 1,\cdots,N,\;j = 1,\cdots, r_i,
\end{split}
\end{equation}
where $R_{0j}$, $j=1,\cdots,r_0$, are all zero-order reactions in the system, $R_{ij}$, $j=1,\cdots,r_i$, are all first-order reactions with reactant $S_i$, and $k_{0j}$ and $k_{ij}$ are the corresponding positive rate constants. Let
\begin{equation*}
\nu_{0j} = (\nu^{0j}_1,\cdots,\nu^{0j}_N) \in \mathbb{N}^N, \quad
\nu_{ij} = (\nu^{ij}_1,\cdots,\nu^{ij}_N) \in \mathbb{N}^N,
\end{equation*}
denote the product complexes of the zero-order and first-order reactions, respectively, and let $e_i=(0,\cdots,1,\cdots,0)$ denote the $i$th standard basis vector of $\mathbb{R}^N$. By Definition~\ref{subspace}, the stoichiometric subspace of the linear reaction network is given by
\begin{equation*}
\mathcal{P} = \operatorname{span}(\{\nu_{0j},\;j=1,\cdots,r_0\}\cup\{\nu_{ij}-e_i,\;i=1,\cdots,N,\;j=1,\cdots,r_i\}),
\end{equation*}
where $\nu_{0j}$ are the reaction vectors of the zero-order reactions and $\nu_{ij}-e_i$ are the reaction vectors of the first-order reactions.

Note that linear reaction systems include synthesis reactions $\varnothing \rightarrow S_i$, degradation reactions $S_i\rightarrow \varnothing$, conversion reactions $S_i\rightarrow S_j$, catalytic reactions $S_i\rightarrow S_i+S_j$, and splitting reactions $S_i\rightarrow S_j+S_k$ as special cases. Because of their generality and flexibility, they are widely used to model a broad range of stochastic phenomena in biology, physics, and chemistry.

\subsection{Stochastic dynamics of linear reaction networks}\label{subsec:cme_invariant}
We now focus on the stochastic dynamics of a linear reaction network under mass-action kinetics \cite{anderson2015stochastic}. The microstate of the stochastic system is represented by a vector $n=(n_1,\cdots,n_N)\in\mathbb{N}^N$, where $n_i$ denotes the molecule number of species $S_i$. The evolution of the stochastic system is governed by a continuous-time Markov chain (also called a Markov jump process) $(X_t)_{t\ge0}$ on the state space $E=\mathbb{N}^N$, where $X_t=(X_t^1,\cdots,X_t^N)$ and $X_t^i$ represents the molecule number of species $S_i$ at time $t$. The propensity functions of the zero-order and first-order reactions are given by
\begin{equation*}
\begin{gathered}
f_{R_{0j}}(n)=k_{0j},\quad j=1,\cdots,r_0,\\
f_{R_{ij}}(n)=k_{ij}n_i,\quad i=1,\cdots,N,\quad j=1,\cdots,r_i.
\end{gathered}
\end{equation*}
We emphasize that in a linear reaction network, multiple reactions may have the same reaction vector (for example, the zero-order reaction $\varnothing\rightarrow S_2$ and the first-order reaction $S_1\rightarrow S_1+S_2$). Hence the transition rates of the process $X_t$ are given by
\begin{equation*}
q_{n,n'}=\sum_{\substack{R\in\mathcal{R}:\;n+w_R=n'}}f_R(n), \quad n'\neq n,
\end{equation*}
where $f_R(n)$ is the propensity function of reaction $R$ and $w_R$ is its reaction vector. Moreover, the total rate of leaving microstate $n$ is given by
\begin{equation}\label{eq:q(n)}
q(n)=\sum_{n'\neq n}q_{n,n'}=\sum_{j=1}^{r_0}k_{0j}+\sum_{i=1}^N\sum_{j=1}^{r_i}k_{ij}n_i,
\end{equation}
which is an affine function of $n$. As usual, we set $q_{n,n}=-q(n)$.

Let $p_t(n) = \mathbb{P}(X_t=n\mid X_0=n_0)$ denote the probability that the system is in microstate $n$ at time $t$, given the initial state $n_0$. Let $p_t=(p_t(n))_{n\in E}$ denote the probability distribution (as a row vector) of the system at time $t$. Then the time evolution of $p_t$ is governed by the chemical master equation
\begin{equation}\label{eq:cme}
\dot{p}_t(n) = \sum_{n'\neq n}\big[p_t(n')q_{n',n}-p_t(n)q_{n,n'}\big], \quad n\in E.
\end{equation}
Equivalently, the chemical master equation can be written in vector form as $\dot{p}_t=p_tQ$, where $Q=(q_{n,n'})_{n,n'\in E}$ denotes the generator matrix of the Markov chain.
A probability distribution $\pi=(\pi(n))_{n\in E}$ is referred to as a \emph{stationary distribution} (or \emph{invariant distribution}) of the system if it satisfies the steady-state balance equation $\pi Q=0$, that is,
\begin{equation}\label{eq:stationary}
\sum_{n'\neq n}\pi(n')q_{n',n} = \sum_{n'\neq n}\pi(n)q_{n,n'}, \quad n\in E.
\end{equation}

We next recall the following important concept \cite{anderson2015stochastic}.

\begin{definition}\label{def:conservation_law}
A nonzero vector $\alpha\in\mathbb{R}^N$ is called a \emph{conservation law} of the linear reaction network if it is orthogonal to all reaction vectors, that is,
\begin{equation*}
\begin{gathered}
\alpha\cdot\nu_{0j}=0,\quad j=1,\cdots,r_0,\\
\alpha\cdot(\nu_{ij}-e_i)=0,\quad i=1,\cdots,N,\;j=1,\cdots,r_i.
\end{gathered}
\end{equation*}
\end{definition}

Clearly, a nonzero vector $\alpha\in\mathbb{R}^N$ is a conservation law if and only if it is orthogonal to every vector in the stoichiometric subspace $\mathcal{P}$. Hence, the number of linearly independent conservation laws is given by
\begin{equation*}
\dim(\mathcal{P}^\perp)=N-\dim(\mathcal{P}).
\end{equation*}
In particular, if the linear reaction network admits no conservation laws, then $\mathcal{P}^\perp=\{0\}$ and hence $\mathcal{P}=\mathbb{R}^N$.

\subsection{Deterministic dynamics of linear reaction networks}\label{subsec:deterministic}
When the molecule numbers of all chemical species are large, the reaction kinetics are often described by deterministic differential equations. The deterministic mass-action kinetics of a linear reaction network is governed by the ordinary differential equation \cite{anderson2015stochastic}
\begin{equation}\label{detmodel}
\dot{z}(t) = \sum_{j=1}^{r_0}k_{0j}\nu_{0j} + \sum_{\ell=1}^N\sum_{j=1}^{r_\ell}k_{\ell j}z_\ell(t)(\nu_{\ell j}-e_\ell),
\end{equation}
where $z(t)=(z_1(t),\cdots,z_N(t))^{\mathsf T}\in\mathbb{R}_{\ge0}^N$, with $z_i(t)$ denoting the concentration of species $S_i$ at time $t$. For simplicity, we assume that the volume of the reaction vessel is fixed at $V=1$, so that the concentration coincides with the molecule number and there is no need to distinguish between them. Here $k_{0j}$ represents the flow (i.e. the number of reaction events occurring per unit time) of the zero-order reaction $R_{0j}$ and $\nu_{0j}$ is the associated reaction vector; $k_{\ell j}z_\ell(t)$ represents the flow of the first-order reaction $R_{\ell j}$ and $\nu_{\ell j}-e_\ell$ is the associated reaction vector. Note that the right-hand side of \eqref{detmodel} is an affine function of $z$, and hence \eqref{detmodel} can be rewritten in vector form as
\begin{equation}\label{eq:affine}
\dot{z}(t) = Az(t)+b,
\end{equation}
where $A=(a_{i\ell})\in\mathbb{R}^{N\times N}$ is a matrix and $b=(b_i)\in\mathbb{R}^N$ is a column vector. Explicitly, the entries of $A$ and $b$ are given by
\begin{equation}\label{eq:jacobian}
a_{i\ell} = \sum_{j=1}^{r_\ell}k_{\ell j}(\nu^{\ell j}_i-\delta_{i\ell}), \quad
b_i = \sum_{j=1}^{r_0}k_{0j}\nu^{0j}_i,
\end{equation}
where $\delta_{i\ell}$ denotes the Kronecker delta. It is a classical result \cite{horn1972general} that the trajectory of the deterministic system \eqref{eq:affine} with initial condition $z(0)\in\mathbb{R}_{\geq0}^N$ remains in the stoichiometric compatibility class $\mathcal{P}_{z(0)}$.

\begin{definition}
For a linear reaction network, the matrix $A$ in \eqref{eq:affine} is called the \emph{first-order influx matrix}, and the vector $b$ in \eqref{eq:affine} is called the \emph{zero-order influx vector}.
\end{definition}

For a linear reaction network, the mean $\mathbb{E}[X_t]$ of the stochastic dynamics satisfies exactly the same differential equation as the deterministic system \eqref{detmodel}. Indeed, multiplying the chemical master equation \eqref{eq:cme} by $n_i$ and summing over all microstates yields
\begin{equation}\label{eq:mean}
\frac{d}{dt}\mathbb{E}[X_t^i]
= \sum_{j=1}^{r_0}k_{0j}\nu^{0j}_i
+\sum_{\ell=1}^N \sum_{j=1}^{r_\ell}k_{\ell j}\mathbb{E}[X_t^{\ell}](\nu^{\ell j}_i-\delta_{i\ell}),
\end{equation}
which is identical to \eqref{detmodel}. Therefore, if the initial mean of the stochastic dynamics is chosen to coincide with the deterministic initial condition, i.e. $\mathbb{E}[X_0]=z(0)$, then $\mathbb{E}[X_t]=z(t)$ for all $t\ge0$. This fact will be used repeatedly throughout the following analysis.

\subsection{Long-time behavior of the deterministic dynamics}
Recall that a real-valued square matrix is called \emph{Metzler} if all its off-diagonal entries are non-negative \cite{berman1994nonnegative}. Note that the first-order influx matrix $A=(a_{i\ell})\in\mathbb{R}^{N\times N}$ in \eqref{eq:affine} is Metzler, because for any $i\neq \ell$,
\begin{equation*}
a_{i\ell}=\sum_{j=1}^{r_\ell}k_{\ell j}\nu^{\ell j}_i\ge0.
\end{equation*}
Let $\sigma(A)$ denote the spectrum of $A$, i.e. the set of all eigenvalues of $A$. By the Perron-Frobenius theorem for Metzler matrices \cite{berman1994nonnegative}, the maximal eigenvalue
\begin{equation*}
\lambda_{\max} := \max\{\operatorname{Re}(\lambda):\lambda\in\sigma(A)\}
\end{equation*}
of $A$ is a real number, and there exists a corresponding eigenvector with non-negative components.

\begin{definition}
An eigenvalue of $A$ is called \emph{semisimple} if it has equal algebraic and geometric multiplicities, or equivalently, if $A$ has no nontrivial Jordan block associated with this eigenvalue. If both the algebraic and geometric multiplicities are equal to $1$, then the eigenvalue is called \emph{simple}.
\end{definition}

We now analyze the long-time behavior of the deterministic dynamics \eqref{eq:affine}, which is determined by (i) the maximal eigenvalue $\lambda_{\max}$ of the first-order influx matrix $A$ and (ii) the position of the zero-order influx vector $b$ relative to the left nullspace of $A$. The main results are summarized in the following proposition, whose proof can be found in \cite{hirsch1974differential}.

\begin{proposition}\label{prop:det_longtime}
The deterministic dynamics \eqref{eq:affine} exhibits the following long-time behavior.
\begin{itemize}
\item[(i)] \textbf{Stable case: $\lambda_{\max}<0$.} The system has a unique globally attracting equilibrium $z^*=-A^{-1}b\ge0$. For any initial condition $z(0)\ge0$, the deterministic trajectory converges exponentially fast to $z^*$, with the convergence rate controlled by $-\lambda_{\max} > 0$. Specifically, for every $\varepsilon\in(0,-\lambda_{\max})$, there exists a constant $C(z(0),\varepsilon)>0$ such that
    \begin{equation*}
    \|z(t)-z^*\|_1\le C(z(0),\varepsilon)e^{-(-\lambda_{\max}-\varepsilon)t}, \quad \text{for all }t\ge0.
    \end{equation*}

\item[(ii)] \textbf{Critical case: $\lambda_{\max}=0$.} Suppose that the zero eigenvalue of $A$ is semisimple and that $b$ is orthogonal to the left nullspace of $A$. For any initial condition $z(0)\ge0$, the deterministic trajectory converges to an equilibrium $z^*\in\mathcal{P}_{z(0)}$ that may depend on the initial condition. The convergence is exponentially fast, with the convergence rate controlled by the spectral gap of $A$, defined by
    \begin{equation}\label{gap}
    \lambda_{\mathrm{gap}}=\inf\{|\operatorname{Re}(\lambda)|:\lambda\in\sigma(A),\;\lambda\neq0\}>0,
    \quad \inf\varnothing:=\infty.
    \end{equation}
    Specifically, for every $\varepsilon\in(0,\lambda_{\mathrm{gap}})$, there exists a constant
    $C(z(0),\varepsilon)>0$ such that
    \begin{equation*}
    \|z(t)-z^*\|_1\le C(z(0),\varepsilon)e^{-(\lambda_{\mathrm{gap}}-\varepsilon)t}, \quad \text{for all }t\ge0.
    \end{equation*}
    \emph{Throughout the paper, when $\lambda_{\rm gap}=\infty$, statements involving an exponential rate $\lambda_{\rm gap}-\varepsilon$ are understood as holding with any prescribed finite rate $\beta>0$.} If the zero eigenvalue of $A$ is not semisimple or if $b$ is not orthogonal to the left nullspace of $A$, then for any initial condition $z(0)>0$, the deterministic trajectory diverges to infinity polynomially fast, i.e. $\lim_{t\to\infty}\|z(t)\|_1=\infty$.

\item[(iii)] \textbf{Unstable case: $\lambda_{\max}>0$.} For any initial condition $z(0)>0$, the deterministic trajectory diverges to infinity exponentially fast, i.e. $\lim_{t\to\infty}\|z(t)\|_1=\infty$.
\end{itemize}
\end{proposition}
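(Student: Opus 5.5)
The approach is to use the variation-of-constants formula $z(t)=e^{At}z(0)+\int_0^t e^{As}b\,\dnum s$ and read off the asymptotics from the Jordan decomposition of $A$. Two facts about $A$ will be used throughout. Since $A$ is Metzler, $A+cI\ge0$ for large $c$, so $e^{At}=e^{-ct}e^{(A+cI)t}$ is entrywise non-negative. Consequently the orthant $\mathbb{R}_{\ge0}^N$ is forward invariant, because $b\ge0$ by \eqref{eq:jacobian}. Second, for every $\varepsilon>0$ the standard Jordan estimate gives $\|e^{At}\|\le C_\varepsilon e^{(\lambda_{\max}+\varepsilon)t}$.

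\emph{Stable case.} Here $A$ is invertible, so $z^*=-A^{-1}b$ is an equilibrium, and $z(t)-z^*=e^{At}(z(0)-z^*)$. The exponential estimate follows immediately from the Jordan bound. For non-negativity, write $-A^{-1}=\int_0^\infty e^{As}\,\dnum s\ge0$; this integral converges because $\lambda_{\max}<0$, and it gives $z^*\ge0$.

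\emph{Critical case with the two hypotheses.} Since $0$ is semisimple, $\mathbb{R}^N=\ker A\oplus\operatorname{ran}A$, and this splitting is $A$-invariant. On $\operatorname{ran}A$ every eigenvalue has real part at most $-\lambda_{\rm gap}$. The condition that $b$ is orthogonal to the left nullspace means $b\in(\ker A^{\mathsf T})^\perp=\operatorname{ran}A$, so $b=-Ac$ for some $c\in\operatorname{ran}A$. Then $y=z-c$ satisfies $\dot y=Ay$. Let $P$ be the spectral projection onto $\ker A$. We get $z(t)=c+Pz(0)-Pc+e^{At}(I-P)(z(0)-c)$, and the last term decays like $e^{-(\lambda_{\rm gap}-\varepsilon)t}$. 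So $z^*=c+P(z(0)-c)$ is an equilibrium, and $z^*\ge0$ as the limit of non-negative trajectories. Moreover $z(t)-z(0)=\int_0^t(Az+b)\,\dnum s$, and $Az+b=\sum_j k_{0j}\nu_{0j}+\sum k_{\ell j}z_\ell(\nu_{\ell j}-e_\ell)\in\mathcal{P}$ by \eqref{detmodel}. Since $\mathcal{P}$ is closed, $z^*\in z(0)+\mathcal{P}$, and hence $z^*\in\mathcal{P}_{z(0)}$.

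\emph{Divergent cases.} This is the main obstacle, because it requires a lower bound rather than an upper bound. I would use positivity through a non-negative left eigenvector. By Perron--Frobenius applied to $A^{\mathsf T}$, which is also Metzler, there is $u\ge0$, $u\ne0$, with $u^{\mathsf T}A=\lambda_{\max}u^{\mathsf T}$. Set $m(t)=u\cdot z(t)$. Then $\dot m=\lambda_{\max}m+u\cdot b$, and $m(0)>0$ because $z(0)>0$.
\begin{itemize}
\item[(a)] If $\lambda_{\max}>0$, then $m(t)\ge m(0)e^{\lambda_{\max}t}$, since $u\cdot b\ge0$. This gives exponential divergence.
\item[(b)] If $\lambda_{\max}=0$ and $u\cdot b>0$ for some non-negative left null vector $u$, then $m$ grows linearly.
\end{itemize}

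The remaining subcases need care. The first is when $b\not\perp\ker A^{\mathsf T}$ but every non-negative left null vector is orthogonal to $b$. The second is when $0$ is not semisimple. For these I would work with the Frobenius normal form, which is block triangular with irreducible diagonal blocks. The plan is to find a chain of classes and a left vector $u$ that is non-negative in the generalized sense. Concretely, I would take a Jordan chain $u_1^{\mathsf T}A=0$ and $u_2^{\mathsf T}A=u_1^{\mathsf T}$, with $u_1\ge0$, chosen on the final classes of the chain. Then $u_2\cdot z$ grows like $t\,(u_1\cdot z(0))$, or like $t\,u_2\cdot b$, with possibly higher polynomial order. The step I expect to be delicate is showing that in $z(t)=e^{At}z(0)+\int_0^te^{As}b\,\dnum s$ the leading polynomial coefficient of some nilpotent part is positive. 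I would argue this directly from entrywise non-negativity. Every term $t^k N^k z(0)$, with $N$ the nilpotent part restricted to the critical blocks, is non-negative. The top-order term is nonzero because $z(0)>0$ and $N^k\ne0$, and similarly the $\int t^k N^k b$ terms are nonzero when $b$ has a component outside $\operatorname{ran}A$. Hence no cancellation can occur, and polynomial divergence follows. Alternatively, the paper's citation of \cite{hirsch1974differential} lets one take this standard linear-ODE asymptotics as given.
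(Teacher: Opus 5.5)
Your arguments for the stable case, the convergent critical case and $\lambda_{\max}>0$ are correct. They follow the standard variation-of-constants and spectral-splitting route that the paper delegates to \cite{hirsch1974differential}, and the positivity inputs you add ($e^{At}\ge0$, $b\ge0$, a non-negative Perron left vector) are what give $z^*\ge0$, $z^*\in\mathcal{P}_{z(0)}$ and divergence from $z(0)>0$. One Metzler fact is used silently and should be stated: $\lambda_{\max}$ is the only eigenvalue on the line $\operatorname{Re}\lambda=\lambda_{\max}$. Indeed, take $c$ large so that $A+cI$ is non-negative with spectral radius $\lambda_{\max}+c>0$. An eigenvalue $\lambda_{\max}+\mathrm{i}\omega$ with $\omega\neq0$ would then give $|\lambda_{\max}+c+\mathrm{i}\omega|>\lambda_{\max}+c$, which is impossible. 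Without this fact, neither the asserted $\lambda_{\rm gap}>0$ nor the claim that the leading growth of $e^{At}$ in the critical case comes from the zero eigenvalue alone is justified.

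The genuine gap is in the divergent critical case, at the step you flagged. Your claim that every term $t^kN^kz(0)$ is non-negative is false.
\begin{itemize}
\item For $S_1\to S_1+S_2$, $S_2\to S_3$ with unit rates, one finds $z_3(t)=z_1(0)\,t+\bigl(z_3(0)+z_2(0)-z_1(0)\bigr)+O(e^{-t})$. The constant coefficient is negative for $z(0)=(10,1,1)$.
\item Appending $S_3\to S_3+S_4$ makes the $t^1$ coefficient of $z_4$ equal to that same negative number, so the failure is not confined to $k=0$.
\end{itemize}
Only the top coefficient has a sign. Let $K\ge2$ be the index of $0$ and $P_0$ the spectral projection onto its generalized eigenspace. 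Then $(AP_0)^{K-1}/(K-1)!=\lim_{t\to\infty}t^{-(K-1)}e^{At}$ is non-negative and nonzero, hence nonzero on any $z(0)>0$.

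Your Jordan-chain functional does not repair this. In the relevant subcase, the slope of $u_2\cdot z$ is $u_1\cdot z(0)+u_2\cdot b$, which contains the sign-indefinite $u_2$, so you have not shown it is nonzero. The fallback of quoting generic linear-ODE theory also fails, because divergence for every $z(0)>0$ genuinely depends on positivity. Already with $b=0$, trajectories started in $\ker A$ are constant.

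A short complete argument makes the Frobenius normal form and your ``first remaining subcase'' unnecessary. Both $e^{At}z(0)$ and $\int_0^te^{As}b\,\dnum s$ are non-negative, so their $\ell^1$ norms add.
\begin{itemize}
\item If $0$ is semisimple and $b\notin\operatorname{ran}A$, your own splitting gives $z(t)=t\,Pb+O(1)$ with $Pb\neq0$.
\item If $0$ is not semisimple, set $\delta=\min_iz_i(0)>0$. Then $\|z(t)\|_1\ge\mathbf{1}^{\mathsf T}e^{At}z(0)\ge\delta\,\mathbf{1}^{\mathsf T}e^{At}\mathbf{1}$. The right-hand side is $\delta$ times the entrywise $\ell^1$ norm of $e^{At}$, which grows like $t^{K-1}$ with $K\ge2$.
\end{itemize}
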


\begin{remark}
We emphasize that in the stable and critical regimes, the arbitrarily small loss $\varepsilon$ in the exponential convergence rate is due to the possible presence of Jordan blocks associated with the dominant eigenvalue (in the stable case, the dominant eigenvalue is the maximal eigenvalue $\lambda_{\max}<0$; in the critical case, it is the nonzero eigenvalue with the largest real part) \cite{hirsch1974differential}. In the divergent regimes (polynomial or exponential divergence), we additionally assume that $z(0)>0$ in order to avoid the technical complications that arise when some species are initially absent.
\end{remark}

According to the above proposition, the deterministic system has a unique globally attracting equilibrium $z^*\geq0$ in the stable regime. In the critical regime, however, the system may have multiple equilibria, even within a single stoichiometric compatibility class. For example, for the reaction network $S_1\to S_2$ and $S_1\to S_3$, every $z^*\geq0$ satisfying $z_1^*=0$ is an equilibrium. Hence, the system has infinitely many equilibria within each stoichiometric compatibility class.

The following proposition identifies a set of conserved quantities in the critical regime.

\begin{proposition}\label{prop:critical_affine}
In the critical case of $\lambda_{\max}=0$, suppose that the zero eigenvalue of $A$ is semisimple and $b$ is orthogonal to the left nullspace of $A$. Then there exists a basis $\{\alpha^{(1)},\cdots,\alpha^{(m)}\}$ of the left nullspace of $A$ consisting of non-negative vectors, i.e. $\alpha^{(1)},\cdots,\alpha^{(m)}\in\mathbb{R}_{\geq0}^N$. Moreover, for any left null vector $\alpha$ of $A$, the quantity $\alpha\cdot z(t)$ is constant in time.
\end{proposition}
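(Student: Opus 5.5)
The second claim is immediate, so I would dispose of it first. If $\alpha^{\mathsf T}A=0$ and $b\perp$ the left nullspace, then along any solution of \eqref{eq:affine}
\begin{equation*}
\frac{d}{dt}\,\alpha\cdot z(t)=\alpha^{\mathsf T}Az(t)+\alpha\cdot b=0 .
\end{equation*}
So $\alpha\cdot z(t)$ is constant. The real content is the existence of a non-negative basis of $\ker(A^{\mathsf T})$, and I would obtain it from the Frobenius normal form of the Metzler matrix $A$.

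\textbf{Step 1: block triangular form and the basic classes.} Permute species so that $A$ is block lower (or upper) triangular with irreducible diagonal blocks $A_{11},\dots,A_{pp}$, corresponding to the strongly connected classes of the influence graph. Each $A_{kk}$ is an irreducible Metzler matrix with $s(A_{kk})\le\lambda_{\max}=0$. Perron--Frobenius for irreducible Metzler matrices says that whenever $s(A_{kk})=0$, this $0$ is simple for the block and has a positive left eigenvector $u_k$. Call such classes basic.

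\textbf{Step 2: semisimplicity and the reduced graph.} I would invoke the classical Rothblum index theorem for nonnegative (hence, via $A+cI$, Metzler) matrices. The index of the eigenvalue $0$ equals the maximal number of basic classes on a chain in the reduced graph. Semisimplicity therefore forces the basic classes to be pairwise unreachable from each other, and the multiplicity $m$ equals the number of basic classes. Moreover, Rothblum's theorem states that the left (and right) generalized nullspace has a basis of non-negative vectors, one per basic class.

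Concretely, the construction for each basic class $k$ is as follows. Take $\alpha^{(k)}$ equal to $u_k$ on class $k$ and $0$ on every class that does not have access to class $k$ in the direction compatible with left multiplication. On the remaining classes $J$, which are necessarily non-basic with $s(A_{JJ})<0$, solve recursively
\begin{equation*}
\alpha_J^{\mathsf T}A_{JJ}=-\sum_{I\neq J}\alpha_I^{\mathsf T}A_{IJ}.
\end{equation*}
This gives $\alpha_J^{\mathsf T}=\bigl(\sum_I \alpha_I^{\mathsf T}A_{IJ}\bigr)(-A_{JJ})^{-1}$. Since $-A_{JJ}$ is a nonsingular M-matrix, $(-A_{JJ})^{-1}\ge0$, and the off-diagonal blocks $A_{IJ}$ are $\ge0$, so $\alpha_J\ge0$. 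The crucial point is that no other basic class lies in the recursion chain, which is exactly what semisimplicity gives; otherwise we would need to invert a singular block.

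\textbf{Step 3: basis.} The resulting vectors $\alpha^{(1)},\dots,\alpha^{(m)}$ are non-negative and left null. They are linearly independent because $\alpha^{(k)}$ is positive on basic class $k$ and zero on the other basic classes. Their number $m$ equals $\dim\ker A^{\mathsf T}$ by semisimplicity, so they form a basis.

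The main obstacle is Step 2: the combinatorial claim that semisimplicity prevents two basic classes from lying on a common access chain. If I do not cite Rothblum, I would prove it directly. Suppose basic classes $k\to\dots\to l$ lie on a chain. Restrict $A$ to the classes on that path and construct a Jordan chain, building $x$ with $Ax=y\neq0$ and $Ay=0$ from the Perron vectors; this contradicts semisimplicity.

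Note that the orthogonality hypothesis on $b$ is not used in the existence part; it is needed only for the conservation claim.
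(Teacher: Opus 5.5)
Your proof is correct and takes the same approach as the paper. The conservation claim comes from differentiating $\alpha\cdot z(t)$, and the non-negative basis of $\ker(A^{\mathsf T})$ comes from Perron--Frobenius (Rothblum) theory for the Metzler matrix $A$, which the paper only cites; your Frobenius-normal-form construction using $(-A_{JJ})^{-1}\ge 0$ on the non-basic classes upstream of each basic class simply spells out what that citation provides.
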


\begin{proof}
The existence of a basis $\{\alpha^{(1)},\cdots,\alpha^{(m)}\}$ of the left nullspace of $A$ consisting of non-negative vectors follows from the Perron-Frobenius theory for Metzler matrices \cite{berman1994nonnegative,hartwig1990analgebraic}. For any left null vector $\alpha$ of $A$, differentiating $\alpha\cdot z(t)$ along the deterministic dynamics $\dot{z}=Az+b$ yields
\begin{equation*}
\frac{d}{dt}(\alpha\cdot z)=\alpha\cdot(Az+b)=0,
\end{equation*}
where $\alpha\cdot b=0$ follows from the orthogonality of $b$ to the left nullspace of $A$. Therefore, $\alpha\cdot z(t)$ is constant in time.
\end{proof}

Recall from Definition~\ref{def:conservation_law} that a conservation law is a vector $\alpha\in\mathbb{R}^N$ orthogonal to all reaction vectors; it follows immediately that $\alpha\cdot z(t)$ is constant in time. Under the assumptions of Proposition~\ref{prop:critical_affine}, the same invariance holds for every left null vector $\alpha$ of $A$. This raises the following question: in the critical case, is every left null vector of $A$ necessarily a conservation law? The following two examples show that the answer is negative in general.

\begin{example}\label{ex:conservation_examples}
Consider the linear reaction network
\begin{equation*}
S_1 \xrightarrow{k_1} S_2,\quad S_2 \xrightarrow{k_2} S_1.
\end{equation*}
For this system, the first-order influx matrix $A$ and the zero-order influx vector $b$ are given by
\begin{equation*}
A = \begin{pmatrix} -k_1 & k_2 \\ k_1 & -k_2 \end{pmatrix},\quad
b = \begin{pmatrix} 0 \\ 0 \end{pmatrix}.
\end{equation*}
It is easy to verify that the maximal eigenvalue of $A$ is zero and that the zero eigenvalue is semisimple. The left nullspace of $A$ is one-dimensional and is spanned by the nonnegative vector $\alpha=(1,1)$. The reaction vectors are $w_1=(-1,1)$ and $w_2=(1,-1)$. Direct computations show that $\alpha$ is orthogonal to both reaction vectors. Hence, every left null vector of $A$ is also a conservation law.
\end{example}

\begin{example}\label{ex:non_conservation}
Consider the linear reaction network
\begin{equation*}
S_1 \xrightarrow{k_1} S_2,\quad S_1 \xrightarrow{k_2} S_3.
\end{equation*}
For this system, the first-order influx matrix $A$ and the zero-order influx vector $b$ are given by
\begin{equation*}
A = \begin{pmatrix}
-(k_1+k_2) & 0 & 0 \\
k_1 & 0 & 0 \\
k_2 & 0 & 0
\end{pmatrix},\quad
b = \begin{pmatrix} 0 \\ 0 \\ 0 \end{pmatrix}.
\end{equation*}
It is easy to verify that the maximal eigenvalue of $A$ is zero and that the zero eigenvalue is semisimple. The left nullspace of $A$ is two-dimensional and is spanned by the nonnegative vectors
\begin{equation*}
\alpha^{(1)}=(k_1,k_1+k_2,0), \quad \alpha^{(2)}=(k_2,0,k_1+k_2).
\end{equation*}
The reaction vectors are $w_1=(-1,1,0)$ and $w_2=(-1,0,1)$. Direct computations show that neither $\alpha^{(1)}$ nor $\alpha^{(2)}$ is orthogonal to the reaction vectors. Hence, this example shows that a left null vector of $A$ is not necessarily a conservation law.
\end{example}

\section{Long-time behavior of the stochastic dynamics}\label{sec:main}
We now state the main results of this paper. We first prove that the stochastic dynamics of linear reaction networks are always non-explosive. We then investigate the long-time behavior of \emph{reducible} stochastic linear reaction networks in the stable ($\lambda_{\max}<0$), critical ($\lambda_{\max}=0$), and unstable ($\lambda_{\max}>0$) regimes, respectively.

\subsection{Non-explosivity}
We first establish a basic fact that will be used throughout the paper: the stochastic dynamics of a linear reaction network cannot explode in finite time. Let $J_0=0$ and, as long as $J_k<\infty$, define the successive jump times of the process $X_t$ by
\begin{equation*}
J_{k+1}:=\inf\{t\ge J_k:X_t\ne X_{J_k}\}, \quad \inf\varnothing:=\infty;
\end{equation*}
once $J_k=\infty$ (this occurs when the system reaches an absorbing state), we set $J_{k+1}=\infty$. Intuitively, $J_k$ is the $k$th jump time of the process. Before reaching an absorbing state, the jump times are finite and strictly increasing; after reaching an absorbing state, the system can no longer make any jumps, and hence the subsequent jump times are all equal to $\infty$. For example, if the reaction network consists of the single reaction $S\to\varnothing$ and there are initially $n_0$ molecules of species $S$, then $J_1<\cdots<J_{n_0}<\infty$ and $J_k=\infty$ for every $k\ge n_0+1$, since the state $n=0$ is absorbing.

Under the above notation, the explosion time of the process $X_t$ is defined by
\begin{equation*}
J_\infty:=\lim_{k\to\infty}J_k.
\end{equation*}
Intuitively, $J_\infty<\infty$ if and only if infinitely many jumps occur within a finite time interval. Given an initial state $n_0$, the process is called \emph{explosive} if $\mathbb{P}_{n_0}(J_\infty<\infty)>0$, and \emph{non-explosive} if $\mathbb{P}_{n_0}(J_\infty=\infty)=1$
\cite{norris1998markov}.

\begin{theorem}\label{thm:non-explosive}
The stochastic dynamics of a linear reaction network is non-explosive --- for every initial state $n_0\in E$, we have
\begin{equation*}
\mathbb{P}_{n_0}(J_\infty=\infty)=1.
\end{equation*}
\end{theorem}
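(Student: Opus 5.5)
The plan is a Foster--Lyapunov (linear growth) criterion. The natural Lyapunov function is a positive linear functional $V(n)=c\cdot n+1$ with $c>0$ componentwise. Because the propensities are affine, the generator applied to $V$ equals
\begin{equation*}
\mathcal{L}V(n)=\sum_{j=1}^{r_0}k_{0j}\,c\cdot\nu_{0j}+\sum_{\ell=1}^N\sum_{j=1}^{r_\ell}k_{\ell j}n_\ell\,c\cdot(\nu_{\ell j}-e_\ell)=c\cdot b+(c^{\mathsf T}A)n .
\end{equation*}
Choose $c=(1,\dots,1)$, or any positive vector. Each entry of $c^{\mathsf T}A$ is a finite constant, and $n\ge0$, so there is a constant $K$ with $\mathcal{L}V(n)\le K\,V(n)$ for all $n\in E$; one may take $K=\max\{c\cdot b,\ \max_\ell (c^{\mathsf T}A)_\ell/c_\ell,\ 0\}$. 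Since $V\to\infty$ as $\|n\|_1\to\infty$ and the sublevel sets $\{V\le m\}$ are finite, the standard non-explosion criterion of Meyn--Tweedie \cite{meyn1993stability}, in the version stated in Norris \cite{norris1998markov}, gives $\mathbb{P}_{n_0}(J_\infty=\infty)=1$.

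To make the argument self-contained, I would spell out the stopping argument. Let $\tau_m=\inf\{t:V(X_t)\ge m\}$. On $[0,\tau_m]$ the process lives in the finite set $\{V\le m\}$ plus at most one further jump. Dynkin's formula applied to $e^{-Kt}V(X_t)$ up to $t\wedge\tau_m$ then gives $\mathbb{E}_{n_0}[e^{-K(t\wedge\tau_m)}V(X_{t\wedge\tau_m})]\le V(n_0)$. This implies $\mathbb{P}(\tau_m\le t)\le e^{Kt}V(n_0)/m\to0$, so $\tau_m\to\infty$ almost surely. Finally, on $\{J_\infty<\infty\}$ the chain must leave every finite set, because within a finite set the jump rates are bounded by $\max q(n)<\infty$ and only finitely many jumps occur in finite time. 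Hence $\tau_m\le J_\infty$ for all $m$, which forces $J_\infty=\infty$ almost surely.

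An alternative, more elementary route uses the affine total rate \eqref{eq:q(n)}. Each jump increases $\|n\|_1$ by at most $D=\max_{R}\|\nu_R\|_1$, so after $k$ jumps $q(X_{J_k})\le a+b'(\|n_0\|_1+kD)$. Holding times are exponential with these rates, hence $\sum_k 1/q(X_{J_k})\ge\sum_k 1/(a'+b''k)=\infty$ almost surely. Since the embedded jump chain almost surely satisfies $\sum 1/q=\infty$, Norris's Theorem 2.3.2 gives non-explosion.

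The only delicate point I expect is the case where an absorbing state is reached (rate $0$), where the jump times become $\infty$. This is harmless in either route: in the first, $J_\infty=\infty$ trivially on that event; in the second, one handles it separately before invoking the series criterion. The rest is a routine verification.
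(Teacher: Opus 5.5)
Your proof is correct, and either route on its own is complete. The paper states this theorem in the main text and defers its proof to Supplementary Section~S1, so I can only assess your argument on its own terms and compare your two routes.

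\textbf{Second route.} This is the more economical one. The affine total rate together with $\|X_{J_k}\|_1\le\|n_0\|_1+kD$ gives the deterministic bound $q(X_{J_k})\le a'+b''k$ (take $a'>0$). Write the holding times as $E_{k+1}/q(X_{J_k})$, with i.i.d.\ $\mathrm{Exp}(1)$ variables $E_k$ independent of the jump chain and the convention $E/0:=\infty$. This convention handles absorbing states without a separate case. You then get $J_\infty\ge\sum_k E_{k+1}/(a'+b''k)=\infty$ almost surely, directly from the exponential-series lemma you cite (Norris, Theorem~2.3.2). No conditioning and no Dynkin formula are needed.

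\textbf{First route.} To justify $\tau_m\le J_\infty$ on a finite sublevel set, this route needs the same bounded-rate holding-time fact anyway. In exchange, it proves $\tau_m\uparrow\infty$ almost surely. That is exactly the form in which the paper later invokes non-explosivity: ``by non-explosivity, $\sigma_m\uparrow\infty$'' appears in Proposition~\ref{prop:lyapunov_moments}, Lemma~\ref{lem:coupling_drift}, and Lemma~\ref{lem:weighted_hitting_stable}.

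\textbf{Citation.} The drift criterion $\mathcal{L}V\le KV$ with norm-like $V$ is due to Meyn and Tweedie \cite{meyn1993stability}. The non-explosion criteria in Norris \cite{norris1998markov} are a finite state space, bounded rates, or a starting state that is recurrent for the jump chain. So ``in the version stated in Norris'' is a misattribution. It is harmless here, because you carry out the stopping argument yourself.
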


\begin{proof}
The proof is given in Supplementary Section~S1.
\end{proof}

\subsection{Long-time behavior in the stable case}
We next investigate the long-time behavior of a \emph{reducible} stochastic linear reaction network, i.e. the convergence of the time-dependent distribution $p_t$ of the system to a stationary limiting distribution $\pi$ as $t\to\infty$, together with the exponential rate at which this convergence occurs. Note that both $p_t=(p_t(n))_{n\in E}$ and $\pi=(\pi(n))_{n\in E}$ are probability measures on the state space $E=\mathbb{N}^N$. We quantify the distance between probability measures using two classical metrics \cite{gibbs2002choosing,villani2009optimal}.

\begin{definition}
Let $\mu$ and $\nu$ be two probability measures on $E$ with finite first moments. Recall that the \emph{total variation distance} between $\mu$ and $\nu$ is defined by
\begin{equation*}
\|\mu-\nu\|_{\mathrm{TV}}
= \frac12\sum_{n\in E} |\mu(n)-\nu(n)|
= \sup_{B\subseteq E} |\mu(B)-\nu(B)|,
\end{equation*}
and the \emph{$L^1$-Wasserstein distance} between $\mu$ and $\nu$ is defined by
\begin{equation}\label{eq:W1_def}
W_1(\mu,\nu)=\inf_{\gamma\in\Pi(\mu,\nu)}\sum_{x,y\in E}\|x-y\|_1\gamma(x,y),
\end{equation}
where $\Pi(\mu,\nu)$ denotes the set of all couplings of $\mu$ and $\nu$, that is, all probability measures on $E\times E$ with $\mu$ and $\nu$ as their marginal distributions.
\end{definition}

By the Kantorovich-Rubinstein duality theorem, we have
\begin{equation}\label{eq:KR_duality}
W_1(\mu,\nu)=\sup_{\substack{f:E\to\mathbb{R}\\ |f(x)-f(y)|\le\|x-y\|_1}}
\left|\sum_{x\in E}f(x)\mu(x)-\sum_{x\in E}f(x)\nu(x)\right|.
\end{equation}
On the discrete state space $E$ equipped with the $\ell^1$-norm, any two distinct states are at distance at least $1$. Consequently, the total variation distance is bounded above by the Wasserstein distance \cite{villani2009optimal}, that is,
\begin{equation}\label{eq:TV_vs_W1}
\|\mu-\nu\|_{\mathrm{TV}}\le W_1(\mu,\nu).
\end{equation}

To make the paper self-contained, we briefly recall some terminology for continuous-time Markov chains on a countable state space \cite{norris1998markov}. A set of states is called \emph{closed} if the process cannot leave it. A closed set is called \emph{irreducible} if any two states in it communicate with each other. A state is called \emph{recurrent} if the process returns to it infinitely often; otherwise, it is called \emph{transient} (it is visited only finitely many times). Equivalently, a state is transient if and only if it does not belong to any closed recurrent class. A recurrent state is called \emph{positive recurrent} if its expected return time is finite; otherwise, it is called \emph{null recurrent} (its expected return time is infinite). An irreducible closed class is positive recurrent if and only if it supports a unique stationary distribution.

We first consider the stable regime of $\lambda_{\max}<0$. In this case, the deterministic trajectory converges to the unique globally attracting equilibrium $z^*=-A^{-1}b\ge0$ exponentially fast (Proposition~\ref{prop:det_longtime}). Previous studies \cite{gupta2014scalable,cappelletti2025stability,xu2026exponential} have established exponential ergodicity for stochastic linear reaction networks under additional irreducibility or structural assumptions. Here we give a structural characterization of the long-time behavior of a reducible stochastic linear reaction network without imposing any additional assumptions.

In the stable case, some species may have zero equilibrium concentrations, i.e. $z_i^*=0$ for some species $S_i$. Such a situation significantly complicates the analysis. We therefore classify all species according to the support of the equilibrium $z^*$:
\begin{equation}\label{eq:def_I_plus}
I_0=\{i:z_i^*=0\},\quad
I_+=\{i:z_i^*>0\}.
\end{equation}
Here $I_0$ indexes the species that become extinct under the deterministic dynamics, while $I_+$ indexes the species that persist under the deterministic dynamics. This partition of the species determines the following absorbing set for the stochastic dynamics:
\begin{equation}\label{eq:def_R}
\mathcal{B}=\{n\in E:\;n_i=0\text{ for all }i\in I_0\}.
\end{equation}
Intuitively, the set $\mathcal{B}$ consists of all microstates where all deterministically extinct species have zero copy numbers. The following theorem shows that, in the stable regime, a stochastic linear reaction network must enter the absorbing set $\mathcal{B}$ in finite time. In other words, the deterministically extinct species must also become extinct in the stochastic sense.

\begin{theorem}\label{thm:hitting_exp}
Assume that $\lambda_{\max}<0$. Then $\mathcal{B}$ is the unique closed irreducible class of the process and is also positive recurrent. Moreover, for any $\varepsilon\in(0,-\lambda_{\max})$ and initial state $n_0\in E$,
\begin{equation*}
\mathbb{E}_{n_0}\bigl[e^{(-\lambda_{\max}-\varepsilon)\tau_{\mathcal{B}}}\bigr]<\infty,
\end{equation*}
where $\tau_{\mathcal{B}}=\inf\{t\ge0:X_t\in\mathcal{B}\}$ is the hitting time of $\mathcal{B}$.
\end{theorem}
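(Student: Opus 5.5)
The plan is to exploit the block structure that $z^*$ forces on $(A,b)$, build a linear Lyapunov function supported on the extinct species $I_0$, and deduce the exponential moment of $\tau_{\mathcal{B}}$ from a supermartingale argument. The closedness, irreducibility and positive recurrence of $\mathcal{B}$ are then handled separately, with irreducibility being the hard part.

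\emph{Step 1: structure of $(A,b)$.} Since $Az^*+b=0$, for each $i\in I_0$ we have
\[
0=\sum_{\ell\neq i}a_{i\ell}z^*_\ell+b_i ,
\]
because the diagonal term $a_{ii}z_i^*$ vanishes. Every summand is non-negative: $A$ is Metzler, $z^*\ge 0$, and $b\ge 0$. Hence $b_i=0$ and $a_{i\ell}=0$ for all $i\in I_0$ and $\ell\in I_+$. By \eqref{eq:jacobian} this means that no zero-order reaction and no reaction with a reactant in $I_+$ produces any species of $I_0$. Two consequences follow.
\begin{itemize}
\item[(a)] $\mathcal{B}$ is closed: once all $I_0$-species are absent, they can never be created again.
\item[(b)] $A$ is block lower-triangular with respect to the splitting $(I_0,I_+)$. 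Hence $A_{00}:=(a_{i\ell})_{i,\ell\in I_0}$ is Metzler and $\sigma(A_{00})\subseteq\sigma(A)$, so its maximal eigenvalue is at most $\lambda_{\max}$.
\end{itemize}

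\emph{Step 2: Lyapunov function and the hitting time.} Fix $\varepsilon\in(0,-\lambda_{\max})$ and set $\gamma=-\lambda_{\max}-\varepsilon$. By Perron--Frobenius theory for Metzler matrices, I will choose a strictly positive vector $u\in\mathbb{R}^{I_0}_{>0}$ with
\[
u^{\mathsf T}A_{00}\le-\gamma u^{\mathsf T}.
\]
This can be done by perturbing $A_{00}$ to $A_{00}+\delta\mathbf{1}\mathbf{1}^{\mathsf T}$, which is irreducible, and taking its positive left Perron vector, with $\delta$ small so that the Perron root stays below $\lambda_{\max}+\varepsilon$. Define $V(n)=\sum_{i\in I_0}u_in_i$. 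Since the generator acts on linear functions exactly as in \eqref{eq:mean}, and $b_i=0$, $a_{i\ell}=0$ for $i\in I_0$, $\ell\in I_+$, we get
\[
\mathcal{L}V(n)=u^{\mathsf T}(An+b)_{I_0}=u^{\mathsf T}A_{00}n_{I_0}\le-\gamma V(n).
\]
By Theorem~\ref{thm:non-explosive} we may localize at the exit times of finite sets and apply Dynkin's formula. This shows that $e^{\gamma(t\wedge\tau_{\mathcal{B}})}V(X_{t\wedge\tau_{\mathcal{B}}})$ is a non-negative supermartingale. Off $\mathcal{B}$ we have $V\ge\min_i u_i>0$. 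Therefore
\[
\min_i u_i\,\mathbb{E}_{n_0}\bigl[e^{\gamma(t\wedge\tau_{\mathcal{B}})};\,\tau_{\mathcal{B}}>t\bigr]\le V(n_0).
\]
Combining this with the trivial bound on the event $\{\tau_{\mathcal{B}}\le t\}$ and letting $t\to\infty$ via Fatou's lemma and monotone convergence gives $\mathbb{E}_{n_0}[e^{\gamma\tau_{\mathcal{B}}}]<\infty$.

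\emph{Step 3: positive recurrence and uniqueness.} Choose a strictly positive $\tilde u\in\mathbb{R}^N$ with $\tilde u^{\mathsf T}A\le-\gamma\tilde u^{\mathsf T}$ and set $\tilde V(n)=\tilde u\cdot n$. Then $\mathcal{L}\tilde V\le-\gamma\tilde V+\tilde u\cdot b$, and $\tilde V$ has finite sublevel sets. By the Foster--Lyapunov criterion, every closed irreducible class is positive recurrent and the process is attracted to a finite set.

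It remains to show that $\mathcal{B}$ is a single communicating class, and this is the step I expect to be the main obstacle. I would first show that the state $0$ is reachable from every $n\in\mathcal{B}$, arguing molecule by molecule. For a single molecule of species $i$, consider the multitype branching (linear) dynamics without inflow. Its mean matrix $e^{At}\to0$, so its extinction probability is positive. Hence there is a finite sequence of first-order reactions that removes that molecule together with all its descendants, and concatenating these sequences reaches $0$.

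Conversely, I would show that every $n\in\mathcal{B}$ is reachable from $0$. Writing $z^*=\int_0^\infty e^{As}b\,ds$, the condition $z^*_i>0$ for $i\in I_+$ yields a reaction path from some inflow reaction to species $i$. The plan is then to first overproduce molecules so that every $I_+$-species exceeds its target count, and then prune the excess using the removal sequences above. The delicate point is that removal sequences may create other species, so they must be ordered, for example by processing species in the reverse of the path structure given by the block decomposition of $A_{++}$, so that the counts already fixed are not disturbed. If this pruning argument becomes too intricate, a fallback is to interpret ``irreducible'' as the communicating class of $0$. One would then show this class is closed and contained in $\mathcal{B}$, and that every state of $\mathcal{B}$ reaches it. Uniqueness then follows since $\tau_{\mathcal{B}}<\infty$ almost surely by Step 2, and every closed class must meet $\mathcal{B}$ and hence contain $0$.
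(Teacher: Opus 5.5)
Your Steps 1--2 take the same route as the paper: the block-triangular form of $A$ forced by $Az^*+b=0$, and a linear Lyapunov function on $I_0$ built from a positive subinvariant vector of $A_{I_0}$ (compare the proof of Lemma~\ref{lem:weighted_hitting_stable}). Two points still need repair.

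\textbf{The end of Step 2 fails as written.} Since $V=0$ on $\mathcal{B}$, the stopped supermartingale $e^{\gamma(t\wedge\tau_{\mathcal B})}V(X_{t\wedge\tau_{\mathcal B}})$ vanishes on $\{\tau_{\mathcal B}\le t\}$. It therefore only gives $\mathbb{P}_{n_0}(\tau_{\mathcal B}>t)\le (V(n_0)/\min_i u_i)\,e^{-\gamma t}$. A tail bound at the same rate $\gamma$ does not imply $\mathbb{E}_{n_0}[e^{\gamma\tau_{\mathcal B}}]<\infty$; an exponential law of rate $\gamma$ is a counterexample. There is also no uniform-in-$t$ ``trivial bound'' on $\mathbb{E}_{n_0}[e^{\gamma\tau_{\mathcal B}};\tau_{\mathcal B}\le t]$. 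The fix is to use the slack in $\varepsilon$: build $u$ for the rate $\gamma'=-\lambda_{\max}-\varepsilon/2>\gamma$. Then either integrate the resulting tail bound, or use $1+\kappa V$ with $\kappa(\gamma'-\gamma)\min_i u_i\ge\gamma$. The latter satisfies $\mathcal{L}(1+\kappa V)+\gamma(1+\kappa V)\le 0$ off $\mathcal{B}$ and is at least $1$ on $\mathcal{B}$. This is exactly why the paper's $U=V+\zeta V_0+1$ uses rate $\rho-\varepsilon/2$ and contains the constant $1$.

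\textbf{The irreducibility step is left open, and your fallback would not prove the theorem.} The statement asserts that $\mathcal{B}$ itself is a single communicating class. Corollary~\ref{cor:irred} uses exactly this, to conclude irreducibility on $E$ when $I_0=\varnothing$. Knowing only that the class of $\mathbf 0$ is contained in $\mathcal{B}$ is not enough.

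The ``delicate point'' you raise disappears once you use monotonicity of linear propensities. If $u\to u+w_R$ has positive rate, then so does $u+n\to u+n+w_R$ for every $n\in E$, because the reactant present in $u$ is still present. Two consequences follow:
\begin{itemize}
\item[(i)] The set of states reachable from $\mathbf 0$ is closed under addition. Concatenating your inflow-to-$S_i$ production paths therefore yields some $y\in\mathcal{B}$ with $y\ge n$ componentwise.
\item[(ii)] From $y$ you reach $n$ in one move. Take a path of the subcritical branching dynamics (first-order reactions only) from $y-n$ to $\mathbf 0$, and shift it by $n$.
\end{itemize}
Every descendant created during this pruning belongs to the excess population and is removed with it. The kept molecules are never touched, so no ordering of species is needed. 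The same shift argument also makes your molecule-by-molecule descent to $\mathbf 0$ rigorous.

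With these observations, every state of $\mathcal{B}$ is reachable from $\mathbf 0$, and $\mathbf 0$ is reachable from every state. Hence $\mathcal{B}$ is closed, irreducible, and the unique closed class, and it is positive recurrent by your Step 3.
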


\begin{proof}
The proof is given in Supplementary Section~S2.
\end{proof}

The following theorem shows that, in the stable regime, a stochastic linear reaction network is exponentially ergodic on the absorbing set $\mathcal{B}$ under the $L^1$-Wasserstein distance.

\begin{theorem}\label{thm:stable_case}
Assume that $\lambda_{\max}<0$. Let $\pi=(\pi(n))_{n\in E}$ be the unique stationary distribution supported on the positive recurrent class $\mathcal{B}$.
\begin{itemize}
\item[(i)] For every $\varepsilon\in(0,-\lambda_{\max})$ and initial state $n_0\in E$, there exists a constant $C(n_0,\varepsilon)<\infty$ such that
    \begin{equation}\label{eq:exponential_ergodic_stable_case}
    W_1\bigl(p_t,\pi\bigr)\le C(n_0,\varepsilon)\,e^{-(-\lambda_{\max}-\varepsilon)t},\quad\text{for all }t\ge0,
    \end{equation}
    where $W_1$ denotes the $L^1$-Wasserstein distance defined in \eqref{eq:W1_def}.
    \item[(ii)] There exist an initial state $n_0\in E$ and a constant $c>0$ such that for all sufficiently large $t$,
    \begin{equation}\label{optimalstable}
    W_1\bigl(p_t,\pi\bigr)\ge ce^{-(-\lambda_{\max})t}.
    \end{equation}
\end{itemize}
\end{theorem}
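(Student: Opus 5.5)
The approach is a synchronous coupling of two copies of the process, exploiting the affine structure of the propensities. For each reaction $R$ with propensity $f_R$, the two copies $X_t$ (started at $n_0$) and $Y_t$ (started from $\pi$) are driven by common Poisson clocks in the standard way. Reaction $R_{0j}$ fires simultaneously in both copies at rate $k_{0j}$. For first-order reactions $R_{ij}$, we fire simultaneously at rate $k_{ij}\min(X^i,Y^i)$ and fire only in the copy with the larger $i$-th coordinate at rate $k_{ij}|X^i-Y^i|$. Under this coupling the difference $D_t=X_t-Y_t$ evolves as a signed linear branching-type system. Moreover, $\mathbb{E}[D_t]$ satisfies $\frac{d}{dt}\mathbb{E}[D_t]=A\,\mathbb{E}[D_t]$, since the $b$ terms cancel.

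A bound on the mean alone is not enough, because $W_1$ requires $\mathbb{E}\|D_t\|_1$. The plan is to control $\mathbb{E}|D_t^i|$ instead. Consider the "unsigned" process in which positive and negative discrepancy particles are tracked separately; the $i$-th coordinates of the two copies are never both in excess simultaneously. Each discrepancy particle of type $S_i$ reacts at rate $k_{ij}$ and produces the discrepancies given by $\nu_{ij}-e_i$, with its sign propagated. The total count $U_t=(U_t^i)$ of discrepancy particles of each species therefore dominates $|D_t|$ componentwise. Its mean satisfies $\frac{d}{dt}\mathbb{E}[U_t]\le A\,\mathbb{E}[U_t]$, which uses the fact that $A$ is Metzler (off-diagonal terms only add), with annihilation of opposite signs only decreasing the count. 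Since $A$ is Metzler, $e^{At}\ge0$, so the comparison principle for cooperative linear ODEs gives
\begin{equation*}
\mathbb{E}\|D_t\|_1\le \mathbf{1}^{\mathsf T}e^{At}\,\mathbb{E}|D_0|\le C_\varepsilon e^{(\lambda_{\max}+\varepsilon)t}\,\mathbb{E}\|n_0-Y_0\|_1 .
\end{equation*}
For the last factor to be finite, $\pi$ must have a finite first moment. To establish this, apply \eqref{eq:mean} from the initial state $n_0$: $\mathbb{E}_{n_0}[X_t]\to z^*$ is bounded, and Fatou's lemma combined with the convergence $p_t\to\pi$ on the positive recurrent class $\mathcal{B}$ (Theorem~\ref{thm:hitting_exp}, non-explosivity by Theorem~\ref{thm:non-explosive}) gives $\sum_n\|n\|_1\pi(n)\le\|z^*\|_1<\infty$. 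Because $Y_t\sim\pi$ for all $t$, we get $W_1(p_t,\pi)\le\mathbb{E}\|D_t\|_1$, which proves (i).

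The main obstacle is making the domination $|D_t|\le U_t$ rigorous; equivalently, verifying that $\frac{d}{dt}\mathbb{E}|D^i_t|\le\sum_\ell a_{i\ell}\mathbb{E}|D^\ell_t|$. I would attempt this directly via Dynkin's formula applied to $\|\cdot\|$-type functions $\sum_i c_i|D^i|$, with $c$ a positive vector satisfying $c^{\mathsf T}A\le(\lambda_{\max}+\varepsilon)c^{\mathsf T}$. Such a $c$ exists by perturbing $A$ to an irreducible Metzler matrix $A+\delta \mathbf{1}\mathbf{1}^{\mathsf T}$ and taking its positive left Perron vector. The generator estimate then reads as follows. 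A jump $\nu_{ij}-e_i$ applied to one copy changes $|D^\ell|$ by at most $\nu^{ij}_\ell$ for $\ell\ne i$ and decreases $|D^i|$ by exactly one when it acts on the copy with the larger $i$-coordinate. This yields $\mathcal{L}V\le(\lambda_{\max}+\varepsilon)V$, hence $\mathbb{E}V(D_t)\le e^{(\lambda_{\max}+\varepsilon)t}V(D_0)$. This avoids the Jordan-block issue and the particle-tracking bookkeeping.

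For (ii), use the Kantorovich–Rubinstein duality \eqref{eq:KR_duality} with the linear function $f(n)=c\cdot n/\|c\|_\infty$, where $c\ge0$ is a nonzero left Perron eigenvector of $A$ for $\lambda_{\max}$. This $f$ is $1$-Lipschitz in $\ell^1$. By \eqref{eq:mean}, $c\cdot\mathbb{E}_{n_0}[X_t]-c\cdot z^*=e^{\lambda_{\max}t}\,c\cdot(n_0-z^*)$, and $\sum_n (c\cdot n)\pi(n)=c\cdot z^*$ by stationarity together with the finite first moment. Choosing $n_0$ with $c\cdot n_0>c\cdot z^*$ (for instance a large multiple of $e_i$ with $c_i>0$) gives
\begin{equation*}
W_1(p_t,\pi)\ge \frac{c\cdot(n_0-z^*)}{\|c\|_\infty}\,e^{-(-\lambda_{\max})t}.
\end{equation*}
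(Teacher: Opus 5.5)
Your proposal is correct. Its core is the paper's: the common-reaction coupling with the weighted-$\ell^1$ bound $\sum_i|d_i|(c^{\mathsf T}A)_i\le(\lambda_{\max}+\varepsilon)\sum_i c_i|d_i|$ is exactly Lemma~\ref{lem:coupling_drift}, and your part (ii) is the paper's argument.

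Where you genuinely differ is in turning this contraction into the theorem. The paper builds the coupling only on $\mathcal B\times\mathcal B$ and proves the estimate for $n_0\in\mathcal B$ (Proposition~\ref{prop:W1_upper}). To reach an arbitrary $n_0\in E$ it then needs a second Lyapunov construction (Lemma~\ref{lem:weighted_hitting_stable}) and a strong-Markov splitting at $\tau_{\mathcal B}$. It obtains the moments of $\pi$ from an exponential Lyapunov function and Foster--Lyapunov theory (Proposition~\ref{prop:lyapunov_moments}).

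You observe, correctly, that the generator estimate never uses membership in $\mathcal B$. Coupling $X_0=n_0\in E$ directly with $Y_0\sim\pi$ therefore gives $W_1(p_t,\pi)\le \frac{\max_i c_i}{\min_i c_i}\bigl(\|n_0\|_1+m_\pi\bigr)e^{(\lambda_{\max}+\varepsilon)t}$ for every $n_0$ at once. The only extra input is $m_\pi<\infty$, which your Fatou argument (mean ODE plus pointwise convergence $p_t\to\pi$ on the positive recurrent class $\mathcal B$) supplies cheaply.

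Your route is shorter and gives an explicit constant linear in $\|n_0\|_1$. The paper's longer route buys three things:
\begin{itemize}
\item exponential moments of $\pi$ and of $X_t$;
\item a quantitative estimate on the hitting time $\tau_{\mathcal B}$;
\item a template that carries over to the critical case, where zero modes rule out any global contraction and two copies can only be coupled inside a common closed set $\mathcal B_c$ after absorption.
\end{itemize}

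One correction to your jump accounting. When the copy with the larger $i$-coordinate fires $R_{ij}$, $|D^i|$ changes by exactly $\nu^{ij}_i-1$ (since $|D^i|\ge1$), not by $-1$. Reactions like $S_i\to 2S_i$ or $S_i\to S_i+S_j$ have $\nu^{ij}_i\ge1$. With the correct accounting, each unmatched jump raises $\sum_\ell c_\ell|D^\ell|$ by at most $c^{\mathsf T}w_{ij}$, and this is what produces $(c^{\mathsf T}A)_i$ including the diagonal entry $a_{ii}$.

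Two routine steps should also be made explicit. Dynkin's formula needs localization at exit times from finite sets, followed by Fatou and non-explosivity of the coupled process. In (ii), the identity $\sum_n n\,\pi(n)=z^*$ follows by averaging the solved mean ODE over $\pi$ and letting $t\to\infty$.
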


\begin{proof}
The proof is given in Section~\ref{sec:stable_proof} using coupling techniques.
\end{proof}

Part (i) of the above theorem shows that, in the stable regime, the time-dependent distribution $p_t$ converges to the stationary distribution $\pi$ at an exponential rate bounded above by $-\lambda_{\max}$, while part (ii) shows that $-\lambda_{\max}$ is the optimal exponential convergence rate.

The following corollary shows that, in the stable regime, a stochastic linear reaction network is also exponentially ergodic on the absorbing set $\mathcal{B}$ under the total variation distance.

\begin{corollary}\label{cor:TV_stable}
Assume that $\lambda_{\max}<0$. Then all conclusions remain valid when the Wasserstein distance $W_1$ is replaced by the total variation distance $\|\cdot\|_{\mathrm{TV}}$. Moreover, for the total variation distance, $-\lambda_{\max}$ is still the optimal exponential convergence rate in the sense of \eqref{optimalstable}.
\end{corollary}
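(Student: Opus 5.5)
The upper bound follows directly from Theorem~\ref{thm:stable_case}(i) and the comparison \eqref{eq:TV_vs_W1}. For every $\varepsilon\in(0,-\lambda_{\max})$ and $n_0\in E$ we get
\begin{equation*}
\|p_t-\pi\|_{\mathrm{TV}}\le W_1(p_t,\pi)\le C(n_0,\varepsilon)e^{-(-\lambda_{\max}-\varepsilon)t}.
\end{equation*}
This gives exponential convergence in total variation at any rate below $-\lambda_{\max}$, with the same constants. The remaining task is to show optimality: there exist $n_0$ and $c>0$ with $\|p_t-\pi\|_{\mathrm{TV}}\ge ce^{\lambda_{\max}t}$ for large $t$. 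The inequality \eqref{eq:TV_vs_W1} points the wrong way for this, so a separate lower-bound argument is required.

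For the lower bound, use a test function tied to the Perron eigenvector. By Perron--Frobenius for the Metzler matrix $A$, there is a nonnegative left eigenvector $u\neq0$ with $u^{\mathsf T}A=\lambda_{\max}u^{\mathsf T}$. By \eqref{eq:mean}, $\frac{d}{dt}\mathbb{E}[u\cdot X_t]=\lambda_{\max}\,u\cdot\mathbb{E}[X_t]+u\cdot b$. Since $\pi$ is stationary and has finite first moment (part of Theorem~\ref{thm:stable_case}), we have $u\cdot\mathbb{E}_\pi[X]=u\cdot z^*$. Therefore
\begin{equation*}
\mathbb{E}_{n_0}[u\cdot X_t]-\mathbb{E}_\pi[u\cdot X]=e^{\lambda_{\max}t}\,u\cdot(n_0-z^*).
\end{equation*}
Choose $n_0$ with $u\cdot(n_0-z^*)\neq0$; for instance, take $n_0$ large along a coordinate where $u_i>0$. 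This reproduces the Wasserstein lower bound, since $x\mapsto u\cdot x/\max_i u_i$ is $1$-Lipschitz.

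To transfer this to total variation, bound the expectation gap by a truncated unbounded test function. Write $f(x)=u\cdot x$. For any $R$,
\begin{equation*}
|\mathbb{E}_{p_t}f-\mathbb{E}_\pi f|\le 2R\|u\|_\infty\|p_t-\pi\|_{\mathrm{TV}}+\mathbb{E}_{p_t}[f\one_{\|X\|_1>R}]+\mathbb{E}_\pi[f\one_{\|X\|_1>R}].
\end{equation*}
The tails are controlled by uniform second moments. For linear networks the second moments also satisfy a closed linear ODE whose drift involves $A\oplus A$, so under $\lambda_{\max}<0$ they are bounded uniformly in $t$. The tails are then at most $C/R$. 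Taking $R\asymp e^{-\lambda_{\max}t/2}$ only yields a lower bound of order $e^{\frac32\lambda_{\max}t}$, which falls short of the optimal rate. This is the main obstacle.

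To get the sharp rate, I would work with a bounded test function whose evolution is still an exact exponential. First try $g_s(x)=s^{x_i}$ or a product of generating-function factors. For linear networks the generating function $\Enum[\prod_i s_i^{X_t^i}]$ evolves along characteristics governed by a first-order PDE, and near $s=\mathbf 1$ the linearization is $A^{\mathsf T}$. Hence $\Enum_{n_0}[g(X_t)]-\Enum_\pi[g]$ should decay exactly like $e^{\lambda_{\max}t}$ times a nonzero constant, which can be checked by the characteristic flow $\dot s=\Phi(s)$. Since $|g|\le1$, this gives $\|p_t-\pi\|_{\mathrm{TV}}\ge\frac12|\Enum_{p_t}g-\Enum_\pi g|\ge ce^{\lambda_{\max}t}$.

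A simpler alternative is a pure-death example: $n_0$ with molecules only in a species feeding the Perron mode. There $\Pnum(X_t\neq\text{typical})$ is explicitly of order $e^{\lambda_{\max}t}$, and the bound needs only one $n_0$. Verifying that the characteristic-flow decay is exactly $e^{\lambda_{\max}t}$, with no polynomial loss and a nonvanishing coefficient, is the step I expect to be hardest. It is handled by choosing $s$ on the Perron direction of the linearized flow.
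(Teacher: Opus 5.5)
You are right that the upper bound follows from $\|p_t-\pi\|_{\mathrm{TV}}\le W_1(p_t,\pi)$ and Theorem~\ref{thm:stable_case}(i); this is the paper's argument. The gap is the optimality half, which is the only nontrivial content of the corollary, and your proposal does not prove it. The truncation route cannot give \eqref{optimalstable}. With uniformly bounded second moments the tails are $O(1/R)$ and must fall below the signal $\Delta e^{\lambda_{\max}t}$. That forces $R\gtrsim e^{-\lambda_{\max}t}$ and gives only $e^{2\lambda_{\max}t}$. Your choice $R\asymp e^{-\lambda_{\max}t/2}$ does not even make the tails smaller than the signal. With the exponential moments of Proposition~\ref{prop:lyapunov_moments} you reach $e^{\lambda_{\max}t}/t$, which is still not enough.

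The generating-function step, where the proof would have to happen, is only asserted, and it hides two real difficulties. Write $\rho:=-\lambda_{\max}$.
\begin{itemize}
\item[(a)] For reducible $A$, $\lambda_{\max}$ may carry a Jordan block. Then $e^{A^{\mathsf T}t}$ controls the quadratic remainder of the characteristic equation only up to polynomial factors, and the left and right Perron vectors can be orthogonal. So a ``nonvanishing coefficient'' is not automatic.
\item[(b)] With zero-order reactions, $\mathbb{E}_\pi[g]$ differs from the immigration part of $\mathbb{E}_{n_0}[g(X_t)]$ by a term of the same order $e^{-\rho t}$. This term can cancel the contribution of $n_0$ unless $n_0$ is chosen to dominate it.
\end{itemize}
The ``pure-death alternative'' does not address the statement. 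The corollary needs an $n_0$ for every network with $\lambda_{\max}<0$, not one example.

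Here is one way to close the gap along your own line.
\begin{itemize}
\item[1.] Set-up. Let $u\ge0$, $u\ne0$, with $A^{\mathsf T}u=-\rho u$, and put $U:=\operatorname{supp}u$, $s:=\mathbf 1-\delta u$, and $g(x):=\prod_i s_i^{x_i}\in[0,1]$.
\item[2.] Product formula. First-order reactions act on single molecules. So under $\mathbb{P}_{n_0}$, $X_t$ is the independent sum of the descendants of the initial molecules (a multitype branching process) and an immigration part started at $\mathbf 0$. Using $P_t(\mathbf 0,\cdot)\to\pi$, this gives
\begin{equation*}
\mathbb{E}_{n_0}[g(X_t)]-\mathbb{E}_\pi[g]=e^{-\int_0^t\psi(r)\,\mathrm dr}\Bigl[\prod_i\bigl(1-y_i(t)\bigr)^{(n_0)_i}-e^{-\int_t^\infty\psi(r)\,\mathrm dr}\Bigr].
\end{equation*}
Here $y=\mathbf 1-F(t,s)$ solves $\dot y=A^{\mathsf T}y-N(y)$ with $0\le N(y)\le C|y|^2$, and $0\le\psi(r)\le b\cdot y(r)$.
\item[3.] Upper bound on $y$. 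Since $\dot y\le A^{\mathsf T}y$ and $e^{A^{\mathsf T}t}\ge0$, one gets $0\le y(t)\le\delta e^{-\rho t}u$.
\item[4.] Support of $y$. Species outside $U$ only produce species outside $U$, so $y$ vanishes off $U$.
\item[5.] Lower bound on $y$, avoiding the Jordan-block loss. The vector $u|_U>0$ is a left eigenvector of $A_U$. Conjugating by $\operatorname{diag}(u|_U)$ turns $A_U+\rho I$ into a Metzler matrix with zero column sums, so $\|e^{(A_U^{\mathsf T}+\rho)t}\|$ is bounded. Variation of constants then gives $y_i(t)\ge(\delta u_i-C\delta^2)e^{-\rho t}$ for $i\in U$.
\item[6.] Choice of $n_0$. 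Take $n_0=Me_i$ with $i\in U$, $\delta$ small and $M>8(b\cdot u)/(\rho u_i)$. For large $t$ the bracket is at most $-\tfrac18M\delta u_ie^{-\rho t}$. Since $0\le g\le1$, this yields $\|p_t-\pi\|_{\mathrm{TV}}\ge ce^{-\rho t}$.
\end{itemize}
Until an argument of this kind is supplied, the optimality part of the corollary remains unproved in your proposal.
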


\begin{proof}
Since the total variation distance is bounded above by the $L^1$-Wasserstein distance, for every $\varepsilon\in(0,-\lambda_{\max})$ and initial state $n_0\in E$, there exists a constant $C(n_0,\varepsilon)<\infty$ such that
\begin{equation*}
\|p_t-\pi\|_{\mathrm{TV}} \le W_1\bigl(p_t,\pi\bigr)
\le C(n_0,\varepsilon)\,e^{-(-\lambda_{\max}-\varepsilon)t},\quad\text{for all }t\ge0.
\end{equation*}
The optimal exponential convergence rate under the total variation distance is proved in Supplementary Section~S2.
\end{proof}

\begin{corollary}\label{cor:irred}
Assume that $\lambda_{\max}<0$. Then the following statements are equivalent:
\begin{itemize}
\item[(i)] $I_0=\varnothing$, i.e. all species persist under the deterministic dynamics;
\item[(ii)] the stochastic dynamics is irreducible and positive recurrent on the entire state space $E$.
\end{itemize}
\end{corollary}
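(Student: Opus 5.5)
The plan is to derive both implications from Theorem~\ref{thm:hitting_exp}, which says that $\mathcal{B}$ is the unique closed irreducible class and that it is positive recurrent. The key observation is that $\mathcal{B}=E$ exactly when $I_0=\varnothing$. Indeed, by \eqref{eq:def_R}, if $I_0$ is empty the constraint defining $\mathcal{B}$ is vacuous, and if $I_0$ is nonempty then $\mathcal{B}$ is a proper subset of $E$.

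For (i)$\Rightarrow$(ii), assume $I_0=\varnothing$, so $\mathcal{B}=E$. Theorem~\ref{thm:hitting_exp} then states that $E$ itself is a closed irreducible class and is positive recurrent. In other words, the chain is irreducible and positive recurrent on the whole state space, which is (ii).

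For (ii)$\Rightarrow$(i), argue by contraposition and suppose $I_0\neq\varnothing$. Choose $i\in I_0$ and any state $n$ with $n_i\ge 1$, for example $n=e_i$; then $n\notin\mathcal{B}$. Since $\mathcal{B}$ is closed, the process cannot move from a state of $\mathcal{B}$ (for instance $0$, which lies in $\mathcal{B}$) to $n$. Hence $0$ and $n$ do not communicate, and the chain is not irreducible on $E$, so (ii) fails.

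The only subtle step is making sure "closed" in Theorem~\ref{thm:hitting_exp} really means that no transitions leave $\mathcal{B}$, so that states outside $\mathcal{B}$ are unreachable from it. If one wanted an argument independent of the theorem, this would follow from the structure of $z^*$. The relation $Az^*+b=0$ with $z^*_{I_0}=0$ forces $b_i=0$ and $a_{i\ell}=0$ for all $i\in I_0$, $\ell\in I_+$: each term in the $i$th component, $b_i+\sum_{\ell\in I_+}a_{i\ell}z^*_\ell$, is nonnegative because $A$ is Metzler and $i\notin I_+$, so each term must vanish. Consequently no reaction with reactant in $I_+$, and no zero-order reaction, produces an $I_0$ species. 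Therefore, starting from a state in $\mathcal{B}$, no reaction can fire that creates an $I_0$ species.
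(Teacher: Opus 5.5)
Your proof is correct and follows the paper's argument: both directions reduce to Theorem~\ref{thm:hitting_exp} via the observation that $\mathcal{B}=E$ if and only if $I_0=\varnothing$. The only difference is in (ii)$\Rightarrow$(i). There you use the closedness of the proper subset $\mathcal{B}$, which you also verify directly from $Az^*+b=0$ as the paper does in the proof of Lemma~\ref{lem:weighted_hitting_stable}. The paper instead uses the uniqueness of the closed irreducible class, so the difference is inessential.
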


\begin{proof}
(i)~$\Rightarrow$~(ii): If $I_0=\varnothing$, then clearly $\mathcal{B}=E$. By Theorem~\ref{thm:hitting_exp}, $\mathcal{B}$ is the unique closed irreducible positive recurrent class. Hence, the stochastic dynamics is irreducible and positive recurrent on the entire state space $E$.

(ii)~$\Rightarrow$~(i): By Theorem~\ref{thm:hitting_exp}, $\mathcal{B}$ is the unique closed irreducible class. If the stochastic dynamics is irreducible and positive recurrent on the entire state space $E$, then we must have $\mathcal{B}=E$. This implies that $I_0=\varnothing$.
\end{proof}

Beyond exponential ergodicity, our results also provide a precise characterization of the support of the stationary distribution, namely, the unique closed irreducible class $\mathcal{B}$ into which the process is eventually absorbed. We next illustrate this result with two examples.

\begin{example}\label{ex:gene_expression}
According to the central dogma of molecular biology, gene expression in a living cell proceeds through two successive steps: transcription, in which mRNA $M$ is synthesized from DNA, and translation, in which protein $P$ is synthesized from mRNA. The classical two-stage model of gene expression \cite{shahrezaei2008analytical} explicitly describes these two steps and is characterized by the following reactions:
\begin{equation*}
\varnothing\xlongrightarrow{k_M}M,\quad M\xlongrightarrow{k_P}M+P,\quad
M\xlongrightarrow{\gamma_M}\varnothing,\quad P\xlongrightarrow{\gamma_P}\varnothing.
\end{equation*}
The first two reactions correspond to transcription and translation, while the last two reactions represent the degradation of the gene products. Here $k_M$ is the transcription rate, $k_P$ is the translation rate, $\gamma_M$ is the degradation rate of mRNA, and $\gamma_P$ is the degradation rate of protein.

We now examine the stochastic dynamics of the two-stage model. In fact, the stationary distribution of mRNA and protein copy numbers for this model can be computed in closed form \cite{bokes2012exact, li2021steady}. The stationary distribution of mRNA copy numbers is Poissonian; however, the stationary distribution of protein copy numbers is considerably more complicated and involves confluent and generalized hypergeometric functions \cite{li2021steady}. We next investigate the exponential ergodicity of the stochastic dynamics. For this reaction network, the first-order influx matrix $A$ and the zero-order influx vector $b$ are given by
\begin{equation*}
A = \begin{pmatrix}
-\gamma_M & 0 \\
k_P & -\gamma_P
\end{pmatrix},\quad
b = \begin{pmatrix} k_M \\ 0 \end{pmatrix}.
\end{equation*}
Because $A$ is lower triangular with strictly negative diagonal entries, its spectrum is given by $\sigma(A)=\{-\gamma_M,-\gamma_P\}$, and hence the maximal eigenvalue of $A$ is $\lambda_{\max}=-\min\{\gamma_M,\gamma_P\}<0$. By Proposition~\ref{prop:det_longtime}, the deterministic system has a unique globally attracting equilibrium $z^*=(z_M^*,z_P^*)=-A^{-1}b$ with
\begin{equation*}
z_M^*=\frac{k_M}{\gamma_M}>0,\quad
z_P^*=\frac{k_Pk_M}{\gamma_M\gamma_P}>0.
\end{equation*}
Clearly, both components of $z^*$ are positive. Hence, by Theorem~\ref{thm:stable_case}, the entire state space $E=\mathbb{N}^2$ is the unique closed irreducible positive recurrent class of the stochastic dynamics, and the system converges exponentially fast to the stationary distribution under both the total variation and $L^1$-Wasserstein distances. The optimal exponential convergence rate is determined by $-\lambda_{\max}=\min\{\gamma_M,\gamma_P\}$, that is, by the smaller of the mRNA degradation rate and the protein degradation rate. In naturally occurring systems, mRNA often has a much shorter lifetime than its protein counterpart, i.e. $\gamma_P\ll\gamma_M$ \cite{shahrezaei2008analytical,jia2017simplification}. In this case, the exponential convergence rate to the stationary distribution is determined by the protein degradation rate $\gamma_P$.
\end{example}

We next consider an example in which some species may become extinct.

\begin{example}\label{ex:open_monomolecular}
Recall that a linear reaction network is called \emph{monomolecular} if every reaction produces at most one molecule, i.e.
\begin{equation*}
\|\nu_{ij}\|_1
=\sum_{\ell=1}^N \nu_\ell^{ij}
\in\{0,1\}
\end{equation*}
for every zero-order or first-order reaction $R_{ij}$ \cite{kelly1979reversibility, gadgil2005stochastic, heuett2006grand, jahnke2007solving}. Clearly, a monomolecular reaction network consists of synthesis reactions $\varnothing\to S_i$, degradation reactions $S_i\to\varnothing$, and conversion reactions $S_i\to S_j$. It is called \emph{open} if it contains at least one synthesis or degradation reaction; otherwise, it is called \emph{closed}. Each monomolecular reaction network is naturally associated with a directed \emph{reaction graph} whose vertices are $\{S_1,\cdots,S_N,\varnothing\}$, that is, all species together with $\varnothing$ whenever synthesis or degradation reactions are present.

The following theorem characterizes when an open monomolecular reaction network is in the stable regime and establishes the exponential ergodicity of its stochastic dynamics.

\begin{theorem}
Consider the stochastic dynamics of an open monomolecular reaction network. If each species $S_i$ can reach $\varnothing$ in the reaction graph, i.e. for every species $S_i$, there exists a directed path
\begin{equation*}
S_i\to S_{i_1}\to S_{i_2}\to\cdots\to S_{i_k}\to\varnothing,
\end{equation*}
then the maximal eigenvalue of the first-order influx matrix $A$ is negative, i.e. $\lambda_{\max}<0$. The system has a unique stationary distribution supported on the absorbing set $\mathcal{B}$, which has the following Poisson-product form:
\begin{equation}\label{monostationary}
\pi(n)=\prod_{i=1}^Ne^{-z_i^*}\frac{(z_i^*)^{n_i}}{n_i!},\quad n\in\mathcal{B},
\end{equation}
where $z^*=-A^{-1}b\ge0$ is the unique globally attracting equilibrium of the deterministic dynamics. Furthermore, the system is exponentially ergodic under both the $L^1$-Wasserstein distance and the total variation distance, with optimal exponential convergence rate $-\lambda_{\max}$, i.e. \eqref{eq:exponential_ergodic_stable_case} and \eqref{optimalstable} hold.
\end{theorem}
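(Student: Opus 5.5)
The argument has three steps: show $\lambda_{\max}<0$, verify the Poisson product formula, and invoke the stable-case theorems already stated.

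\textbf{Step 1: $\lambda_{\max}<0$.} For a monomolecular network the entries of $A$ are explicit. Write $k_{i\to j}$ for the rate of $S_i\to S_j$ and $k_{i\to\varnothing}$ for the degradation rate of $S_i$. Then $a_{ji}=k_{i\to j}\ge 0$ for $j\ne i$, and $a_{ii}=-\sum_{j\ne i}k_{i\to j}-k_{i\to\varnothing}$. Every column sum is therefore $\mathbf 1^{\mathsf T}A e_i=-k_{i\to\varnothing}\le 0$. So $A^{\mathsf T}$ is a Metzler matrix with nonpositive row sums, and it is the generator of a sub-Markovian chain on the species, killed at rate $k_{i\to\varnothing}$.

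The hypothesis that every $S_i$ reaches $\varnothing$ means that every state of this chain reaches the cemetery with positive probability. Hence $e^{tA^{\mathsf T}}\mathbf 1\to0$ as $t\to\infty$, i.e. $e^{tA}\to0$, which forces every eigenvalue to satisfy $\operatorname{Re}(\lambda)<0$.

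As an alternative, I would give a Perron--Frobenius argument. Let $v\ge0$ be a nonzero left eigenvector with $vA=\lambda_{\max}v$, and suppose $\lambda_{\max}\ge0$. Let $i$ maximize $v_i/1$. Iterating the row inequality $\sum_j v_j a_{ji}=\lambda_{\max}v_i\ge0$ along a directed path from $S_i$ to $\varnothing$ shows that the maximal value propagates along the path until it reaches a species with $k_{\cdot\to\varnothing}>0$. At that species we get $\lambda_{\max}v_i<0$, a contradiction.

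\textbf{Step 2: Poisson product form.} Since $\lambda_{\max}<0$, Proposition~\ref{prop:det_longtime}(i) gives $z^*=-A^{-1}b\ge0$. Theorem~\ref{thm:hitting_exp} shows that $\mathcal{B}$ is the unique closed irreducible class and is positive recurrent. It therefore suffices to check that the product-Poisson measure $\pi$ in \eqref{monostationary}, with the convention $0^0=1$ so that $\pi$ is supported on $\mathcal{B}$, satisfies $\pi Q=0$. Uniqueness then identifies it as the stationary distribution.

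I would verify $\pi Q=0$ through the generating function. The chemical master equation for $G(s)=\sum_n\pi(n)\prod_i s_i^{n_i}$ becomes the first-order PDE
\[
0=\sum_i b_i(s_i-1)G+\sum_i\Bigl(\sum_{j}k_{i\to j}(s_j-s_i)+k_{i\to\varnothing}(1-s_i)\Bigr)\partial_{s_i}G .
\]
For $G=\exp\bigl(\sum_i z^*_i(s_i-1)\bigr)$ we have $\partial_{s_i}G=z_i^*G$. After division by $G$, the right-hand side reduces to $(s-\mathbf 1)\cdot(Az^*+b)$, which vanishes because $Az^*+b=0$.

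An equivalent route is to check that the Poisson measure satisfies the complex-balance (Kelly) equations at each vertex of the reaction graph; these amount to $Az^*+b=0$ componentwise.

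\textbf{Step 3: Ergodicity.} Exponential ergodicity in $W_1$, together with optimality of the rate $-\lambda_{\max}$, follows directly from Theorem~\ref{thm:stable_case}. The corresponding statements in total variation follow from Corollary~\ref{cor:TV_stable}.

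\textbf{Main obstacle.} The hardest step is Step 1, specifically the careful handling of reducibility. Species that are unreachable from one another, or that belong to different strongly connected components, must each inherit decay from their own path to $\varnothing$. The killed-chain argument handles this uniformly: the probability of survival up to time $1$ is strictly less than $1$ from every state, and by the Markov property survival then decays geometrically in $t$.
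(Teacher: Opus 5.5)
Your proposal is correct. Step 3 matches the paper, but Steps 1 and 2 take different routes.

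For $\lambda_{\max}<0$, the paper first uses Gershgorin discs to place $\sigma(A)$ in the closed left half-plane, meeting $\mathrm{i}\mathbb{R}$ only at $0$. It then excludes $0$ by taking $x\in\ker(A^{\mathsf T})$ with $|x_i|=\|x\|_\infty$ and propagating the equality case of $(d_i+s_i)|x_i|\le s_i\|x\|_\infty$ along a path to $\varnothing$. Your Perron--Frobenius variant is the same propagation, applied to a nonnegative left eigenvector of the real eigenvalue $\lambda_{\max}$, so the Gershgorin step becomes unnecessary. Your killed-chain argument is genuinely different. It handles reducibility most cleanly, through the uniform bound $\max_i\mathbb{P}_i(\text{survival to time }1)<1$ on a finite state space.

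For the stationary law, the paper identifies $I_+$ as the species reachable from $\varnothing$. It then notes that the subnetwork on $\{\varnothing\}\cup I_+$ is weakly reversible and invokes known product-form results. You instead verify $\pi Q=0$ directly on the full network from $Az^*+b=0$ via the generating function. Coefficient comparison is legitimate there because $G$ is entire. Your route needs no restriction to a subnetwork and no external theorem, so it is self-contained. The paper's route additionally gives an explicit combinatorial description of $I_+$, and hence of $\mathcal{B}$.
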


\begin{proof}
An open monomolecular reaction network can be written in the general form
\begin{equation*}
\varnothing\xrightarrow{\beta_i}S_i,\quad S_i\xrightarrow{d_i}\varnothing,\quad S_i\xrightarrow{k_{ij}}S_j,\quad i\ne j,
\end{equation*}
where $\beta_i,d_i,k_{ij}\ge0$. The deterministic kinetics of the network is governed by the differential equation
\begin{equation*}
\dot z=Az+b=\sum_{i=1}^N\left[\sum_{j\ne i}k_{ij}(e_j-e_i)-d_ie_i\right]z_i+(\beta_1,\cdots,\beta_N)^{\mathsf T}.
\end{equation*}
Thus, the first-order influx matrix is given by
\begin{equation*}
A=\begin{pmatrix}
-d_1-\sum_{j\ne1}k_{1j} & k_{21} & \cdots & k_{N1}\\
k_{12} & -d_2-\sum_{j\ne2}k_{2j} & \cdots & k_{N2}\\
\vdots & \vdots & \ddots & \vdots\\
k_{1N} & k_{2N} & \cdots & -d_N-\sum_{j\ne N}k_{Nj}
\end{pmatrix},
\end{equation*}
and the zero-order influx vector is given by $b=(\beta_1,\cdots,\beta_N)^{\mathsf T}$.

Set $s_i=\sum_{j\ne i}k_{ij}$. By the Gershgorin circle theorem \cite{horn2012matrix}, every eigenvalue of $A$ lies in one of the discs
\begin{equation*}
D_i=\{\lambda\in\mathbb C:|\lambda+d_i+s_i|\le s_i\},\quad i=1,\cdots,N.
\end{equation*}
Each $D_i$ lies in the closed left half-plane and can intersect the imaginary axis only at $0$. Hence,
\begin{equation*}
\sigma(A)\subseteq\{z\in\mathbb C:\operatorname{Re}z\le0\},\quad
\sigma(A)\cap\mathrm{i}\mathbb{R}\subseteq\{0\}.
\end{equation*}
It remains to show that $0\notin\sigma(A)$.

Suppose, to the contrary, that $A^{\mathsf T}x=0$ for some $x\ne0$, and choose $i$ such that $|x_i|=\|x\|_\infty>0$. Then the $i$th equation of $A^{\mathsf T}x=0$ gives
\begin{equation}\label{temprecursive}
(d_i+s_i)|x_i|
=\bigg|\sum_{j\ne i}k_{ij}x_j\bigg|
\le s_i\|x\|_\infty
=s_i|x_i|.
\end{equation}
This implies that $d_i=0$. Equality must therefore hold throughout \eqref{temprecursive}, which in turn implies
\begin{equation}\label{recursive}
|x_j|=\|x\|_\infty\quad\text{whenever }k_{ij}>0.
\end{equation}
By assumption, there exists a directed path
\begin{equation*}
S_i\to S_{i_1}\to S_{i_2}\to\cdots\to S_{i_k}\to\varnothing
\end{equation*}
in the reaction graph from $S_i$ to $\varnothing$. This implies that $k_{ii_1},k_{i_1i_2},\cdots,k_{i_{k-1}i_k}>0$. Applying \eqref{recursive} successively along this path yields $|x_{i_k}|=\|x\|_\infty$. Applying \eqref{temprecursive} with $i_k$ in place of $i$ then gives $d_{i_k}=0$, contradicting the existence of the degradation reaction $S_{i_k}\to\varnothing$. Thus, $\ker(A^{\mathsf T})=\{0\}$ and hence $0\notin\sigma(A)$. The preceding Gershgorin estimate now implies that every eigenvalue of $A$ has strictly negative real part, and therefore $\lambda_{\max}<0$.

Proposition~\ref{prop:det_longtime} thus shows that $z^*=-A^{-1}b\ge0$ is the unique globally attracting equilibrium of the deterministic system. Moreover, it is easy to see that the set of persistent species is given by
\begin{equation}\label{property}
I_+ = \{i:\text{there exists a directed path from }\varnothing\text{ to }S_i\},
\end{equation}
and Theorem~\ref{thm:stable_case} implies that the unique stationary distribution is supported on the absorbing set
\begin{equation*}
\mathcal{B} = \{n\in E:n_i=0\text{ for all }i\notin I_+\}.
\end{equation*}
To proceed, we recall that a reaction network is called \emph{weakly reversible} if every connected component of its reaction graph is strongly connected \cite{anderson2010product}. Consider the reaction subnetwork induced by $\{\varnothing\}\cup\{S_i:i\in I_+\}$. By using \eqref{property}, $\varnothing$ can reach every species in this subnetwork. Conversely, every such species can reach $\varnothing$ by assumption. Moreover, all intermediate species on these paths also belong to $I_+$, and hence the subnetwork is weakly reversible. Therefore, the stationary distribution of the subnetwork has the following Poisson-product form \cite{jahnke2007solving,anderson2010product}:
\begin{equation*}
\pi(n)=\prod_{i\in I_+}e^{-z_i^*}\frac{(z_i^*)^{n_i}}{n_i!},\quad n\in\mathcal{B}.
\end{equation*}
Since $z_i^*=n_i=0$ for all $i\notin I_+$ and $n\in\mathcal{B}$, the stationary distribution of the original network is given by \eqref{monostationary}. Finally, exponential ergodicity under the total variation and $L^1$-Wasserstein distances, together with the optimal exponential convergence rate $-\lambda_{\max}$, follows from Theorem~\ref{thm:stable_case} and Corollary~\ref{cor:TV_stable}.
\end{proof}
\end{example}

\subsection{Long-time behavior in the critical case}
We now turn to the critical regime of $\lambda_{\max}=0$ and analyze the long-time behavior of the stochastic system for a given initial state $n_0\in E$. Throughout this section, we impose the following two conditions:
\begin{enumerate}
\item[(C1)] the zero eigenvalue of $A$ is semisimple;
\item[(C2)] $b$ is orthogonal to the left nullspace of $A$.
\end{enumerate}
These two conditions were also imposed in Proposition~\ref{prop:det_longtime} to guarantee the exponential convergence of the deterministic system. If the initial condition of the deterministic system is chosen to coincide with the initial state of the stochastic system, i.e. $z(0)=n_0\geq 0$, then the deterministic trajectory converges exponentially fast to an equilibrium $z^*\in\mathcal{P}_{n_0}$, which may depend on $n_0$, and the convergence rate is determined by $\lambda_{\mathrm{gap}}$. Since the limiting equilibrium depends on the initial state $n_0$, the set $I_+$ of persistent species, the set $I_0$ of extinct species, and the absorbing set $\mathcal{B}$ may also depend on $n_0$, as defined in \eqref{eq:def_I_plus} and \eqref{eq:def_R}.

The following result is the counterpart of Theorem~\ref{thm:hitting_exp} for the critical regime. It shows that every deterministically extinct species must also become extinct in the stochastic sense.

\begin{theorem}\label{thm:hitting_critical}
Assume that $\lambda_{\max}=0$ and that conditions (C1) and (C2) are satisfied. Then for every initial state $n_0\in E$, the set $\mathcal{B}$ defined in \eqref{eq:def_R} is closed. Moreover, for every $\varepsilon\in(0,\lambda_{\mathrm{gap}})$,
\begin{equation*}
\mathbb{E}_{n_0}\bigl[e^{(\lambda_{\mathrm{gap}}-\varepsilon)\tau_{\mathcal{B}}}\bigr] < \infty,
\end{equation*}
where $\tau_{\mathcal{B}} = \inf\{t\geq0:X_t\in\mathcal{B}\}$ is the hitting time of $\mathcal{B}$.
\end{theorem}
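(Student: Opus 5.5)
The argument has two parts: closedness of $\mathcal{B}$, and the exponential moment of $\tau_{\mathcal{B}}$. Both rest on one structural fact. Let $z(t)$ solve \eqref{eq:affine} with $z(0)=n_0$ and limit $z^*$. Then for every $i\in I_0$ and every $\ell\in I_+$ we have $a_{i\ell}=0$, and moreover $b_i=0$.

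To see this, note that $z_i^*=0$ for $i\in I_0$ and $Az^*+b=0$, so
\[
0=\sum_{\ell} a_{i\ell}z^*_\ell+b_i=\sum_{\ell\in I_+}a_{i\ell}z^*_\ell+b_i .
\]
Every term on the right is non-negative, since $a_{i\ell}\ge0$ for $\ell\ne i$ and $b_i\ge0$. Hence all of them vanish. Because $z^*_\ell>0$ for $\ell\in I_+$, this gives $a_{i\ell}=0$ and $b_i=0$.

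By \eqref{eq:jacobian}, $a_{i\ell}=0$ with $i\ne\ell$ means that no first-order reaction with reactant $S_\ell$ ($\ell\in I_+$) produces $S_i$. Likewise, $b_i=0$ means that no zero-order reaction produces $S_i$. Consequently, starting from a state with $n_i=0$ for all $i\in I_0$, only reactions whose reactants lie in $I_+$ or are absent can fire, and none of them creates an $I_0$ species. Therefore $\mathcal{B}$ is closed.

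For the hitting time, set $V(n)=\sum_{i\in I_0}n_i$, so that $\mathcal{B}=\{V=0\}$ and $\tau_{\mathcal{B}}$ is the first time $V(X_t)=0$. By the structural fact, the vector $y_t=(\mathbb{E}_{n_0}[X_t^i])_{i\in I_0}$ satisfies the closed linear system
\[
\dot y=A_{00}\,y,
\]
where $A_{00}$ is the principal submatrix of $A$ on $I_0$. Indeed, the $I_+$ coordinates and $b$ do not feed into the $I_0$ coordinates.

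The key claim is that every eigenvalue of $A_{00}$ has real part at most $-\lambda_{\mathrm{gap}}$. The argument uses the block lower-triangular structure of $A$ with respect to the ordering $(I_0,I_+)$. This structure gives $\sigma(A_{00})\subseteq\sigma(A)$. The remaining step is to exclude $0\in\sigma(A_{00})$. Suppose instead that $0\in\sigma(A_{00})$. Since $A_{00}$ is Metzler, Perron--Frobenius provides a nonnegative right null vector, supported on $I_0$, which extends by zero to a null vector of $A$. By semisimplicity (C1), the deterministic limit then satisfies $z^*=Pn_0+(\text{contribution from }b)$, where $P$ is the spectral projection onto $\ker A$. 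The plan is to show that $n_0$ has positive overlap with the corresponding left null vector, which would force $z^*_i>0$ for some $i\in I_0$, a contradiction. If the overlap argument is too delicate, the fallback is to work only with the $I_0$-block restricted to species reachable from $n_0$.

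Granting the spectral claim, for each $\varepsilon$ we obtain $\mathbb{E}_{n_0}[V(X_t)]\le Ce^{-(\lambda_{\mathrm{gap}}-\varepsilon/2)t}$. Since $V$ is integer-valued and $\mathcal{B}$ is closed, Markov's inequality gives
\[
\mathbb{P}(\tau_{\mathcal{B}}>t)=\mathbb{P}(V(X_t)\ge1)\le\mathbb{E}[V(X_t)].
\]
Integrating this tail bound against $(\lambda_{\mathrm{gap}}-\varepsilon)e^{(\lambda_{\mathrm{gap}}-\varepsilon)t}$ yields the finite exponential moment. Non-explosivity (Theorem~\ref{thm:non-explosive}) together with a standard localization justifies \eqref{eq:mean} for the $I_0$ block. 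This mirrors the proof of Theorem~\ref{thm:hitting_exp}.

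The main obstacle is the spectral claim that $A_{00}$ has no zero eigenvalue. This is exactly where (C1) and (C2) must enter, through the identification of $z^*$ via the spectral projection. Care is also needed with species having $z_i(t)\equiv0$ throughout, which trivially cause no difficulty.
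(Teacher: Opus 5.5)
Your closedness argument is sound, and so is the reduction $\{\tau_{\mathcal B}>t\}=\{V(X_t)\ge 1\}$, the Markov bound and the tail integration. The gap is the key claim that $0\notin\sigma(A_{00})$. It is false in general, so your route to the decay of $\mathbb{E}_{n_0}V(X_t)$ cannot be completed as stated.

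For a counterexample, take the network $S_1\to S_2$, $S_1\to S_3$ of Example~\ref{ex:non_conservation} with $n_0=(0,0,1)$. Then $z(t)\equiv z^*=(0,0,1)$, so $I_0=\{1,2\}$ and $A_{00}=\begin{pmatrix}-(k_1+k_2)&0\\ k_1&0\end{pmatrix}$, which has eigenvalue $0$. More generally, species of $I_0$ that are absent initially and never produced can carry zero modes of $A_{00}$. Your overlap argument fails exactly here. The nonnegative left null vector $(k_1,k_1+k_2)$ of this $A_{00}$ has zero overlap with $n_0$, so no contradiction arises. There is also a direction error in your sketch. Because the vanishing off-diagonal block of $A$ is the $(I_0,I_+)$ block, it is a left null vector of $A_{00}$ that extends by zero to a left null vector of $A$; a right null vector need not extend.

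The missing observation is that no spectral information about $A_{00}$ is needed. Since $\mathbb{E}_{n_0}X_t=z(t)$ and $z_i^*=0$ for $i\in I_0$, Proposition~\ref{prop:det_longtime}(ii) gives directly
\[
\mathbb{E}_{n_0}V(X_t)=\sum_{i\in I_0}\bigl|z_i(t)-z_i^*\bigr|\le\|z(t)-z^*\|_1\le C\bigl(n_0,\tfrac{\varepsilon}{2}\bigr)e^{-(\lambda_{\mathrm{gap}}-\varepsilon/2)t}.
\]
This is where (C1)--(C2) enter. Equivalently, $y(t)=e^{tA_{00}}y(0)\to0$ forces $y(0)$ to have no component in the generalized eigenspace of $A_{00}$ for the eigenvalue $0$. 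All other eigenvalues of $A_{00}$ lie in $\sigma(A)\setminus\{0\}$ and so have real part at most $-\lambda_{\mathrm{gap}}$.

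Your fallback can also be made rigorous. Let $R$ be the set of species reachable from $\operatorname{supp}(n_0)$ and from the products of zero-order reactions, so that $z_i(t)\equiv0$ and $X_t^i\equiv0$ for $i\notin R$. Suppose the block of $A$ on $I_0\cap R$ had a zero eigenvalue, with nonnegative left null vector $\alpha_0$ extended by zero. Using $b_{I_0}=0$, $a_{i\ell}=0$ for $i\in I_0$, $\ell\in I_+$, and $z_{R^c}\equiv0$, the quantity $\alpha_0\cdot z(t)$ is constant, hence equal to $\alpha_0\cdot z^*=0$. But no species of $R$ outside $\operatorname{supp}(\alpha_0)$ feeds into $\operatorname{supp}(\alpha_0)$, so reachability forces $\alpha_0\cdot n_0>0$, a contradiction. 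With either repair, the rest of your argument goes through unchanged.
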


\begin{proof}
The proof is given in Supplementary Section~S3.
\end{proof}

We next investigate the long-time behavior in the critical regime. As shown in Example~\ref{ex:non_conservation}, a left null vector of the first-order influx matrix $A$ may not correspond to a conservation law. In this case, the long-time behavior of the stochastic system may differ from that of the deterministic system, and an additional condition is needed to exclude such pathological cases.

\begin{definition}\label{def:restriction_subnetwork}
Let $(\mathcal{S},\mathcal{C},\mathcal{R})$ be a general reaction network, and let $\mathcal{S}'\subseteq\mathcal{S}$ be a subset of species. For a reaction $R\in\mathcal{R}$ with source complex $\mu_R=(\mu_i^R)_{S_i\in\mathcal{S}}$ and product complex $\nu_R=(\nu_i^R)_{S_i\in\mathcal{S}}$, its \emph{restriction} to $\mathcal{S}'$ is defined as the formal reaction
\begin{equation*}
R|_{\mathcal{S}'}:\sum_{S_i\in\mathcal{S}'}\mu_i^RS_i\longrightarrow\sum_{S_i\in\mathcal{S}'}\nu_i^RS_i.
\end{equation*}
For the reaction network $(\mathcal{S},\mathcal{C},\mathcal{R})$, its \emph{restriction} to $\mathcal{S}'$ is defined as the reaction subnetwork $(\mathcal{S}',\mathcal{C}',\mathcal{R}')$, where
\begin{equation*}
\begin{gathered}
\mathcal{R}'=\left\{R|_{\mathcal{S}'}:\mu_R|_{\mathcal{S}\setminus\mathcal{S}'}=0,\ \mu_R|_{\mathcal{S}'}\neq\nu_R|_{\mathcal{S}'}\right\},\quad
\mathcal{C}'=\left\{\mu_R|_{\mathcal{S}'},\nu_R|_{\mathcal{S}'}:R|_{\mathcal{S}'}\in\mathcal{R}'\right\}.
\end{gathered}
\end{equation*}
Here the condition $\mu_R|_{\mathcal{S}\setminus\mathcal{S}'}=0$ means that, in the restricted network, we only retain those reactions $R\in\mathcal{R}$ whose reactant complexes involve species exclusively from $\mathcal{S}'$.
\end{definition}

Let $I_+$ be the set of persistent species and let $I_0$ be the set of extinct species as defined in \eqref{eq:def_I_plus}. Once the process enters the absorbing set $\mathcal{B}$, first-order reactions with reactants in $I_0$ have zero propensities and are no longer dynamically relevant. Therefore, it is natural to restrict the linear reaction network to $I_+$ according to Definition~\ref{def:restriction_subnetwork}. For each restricted reaction $R|_{I_+}\in\mathcal{R}'$, we denote its reaction vector by $w_R|_{I_+} := \nu_R|_{I_+}-\mu_R|_{I_+}$. To distinguish between qualitatively different long-time behaviors in the critical regime, we introduce the following definition.

\begin{definition}\label{regular}
In the critical regime of $\lambda_{\max}=0$, we say that the \emph{regularity condition} is satisfied if every left null vector of $A$ induces a conservation law for the reaction network restricted to $I_+$, i.e. if
\begin{equation*}
\alpha|_{I_+}\cdot w_R|_{I_+}=0,\quad \text{for all }\alpha\in\ker(A^{\mathsf T})\text{ and }R|_{I_+}\in\mathcal{R}',
\end{equation*}
where $\alpha|_{I_+}$ denotes the restriction of $\alpha$ to $I_+$.
\end{definition}

Assume that the regularity condition holds. By Proposition \ref{prop:critical_affine}, there exists a basis $\{\alpha^{(1)},\cdots,\alpha^{(m)}\}$ of the left nullspace of $A$ consisting of non-negative vectors. Since all microstates in the absorbing set $\mathcal{B}$ have zero components for $i\in I_0$, once the process $X_t$ has entered $\mathcal{B}$ (i.e. after the hitting time $\tau_\mathcal{B}$), the quantities $\alpha^{(i)}\cdot X_t$ remain constant in time. Before the process enters $\mathcal{B}$, however, these quantities $\alpha^{(i)}\cdot X_t$ may vary, since some left null vectors of $A$ may not correspond to conservation laws of the original reaction network. For any $c = (c_1,\cdots,c_m)\in\mathbb{R}_{\ge0}^m$, we define
\begin{equation}\label{Ac}
\mathcal{B}_{c} = \{n\in\mathcal{B}:\alpha^{(i)}\cdot n=c_i,\;i=1,\cdots,m\}.
\end{equation}
Clearly, $\mathcal{B}$ is the disjoint union of all $\mathcal{B}_{c}$, i.e. $\mathcal{B} = \bigcup_{c\in\mathbb{R}_{\ge0}^m}\mathcal{B}_{c}$. For the given initial state $n_0\in E$, the process $X_t$ is absorbed into $\mathcal{B}_{c}$ with probability
\begin{equation*}
p_{n_0}(c) := \mathbb{P}_{n_0}\big(\alpha^{(i)}\cdot X_{\tau_{\mathcal{B}}}=c_i,\;i=1,\cdots,m\big),
\end{equation*}
where $\tau_{\mathcal{B}}$ denotes the hitting time of $\mathcal{B}$.

The following result is the counterpart of Theorem \ref{thm:stable_case} for the critical regime. It characterizes the exponential convergence of the system to a stationary limiting distribution under both the total variation and $L^1$-Wasserstein distances.

\begin{theorem}\label{thm:critical_regular}
Fix an initial state $n_0\in E$. Assume that $\lambda_{\max}=0$, that conditions (C1) and (C2) are satisfied, and that the regularity condition in Definition~\ref{regular} is satisfied. Then there exists a unique stationary distribution $\pi_c$ within each nonempty $\mathcal{B}_c$. Moreover, for every $\varepsilon\in(0,\lambda_{\mathrm{gap}})$, there exists $C(n_0,\varepsilon)<\infty$ such that
\begin{equation}\label{criticalestimate}
\|p_t-\pi_{n_0}\|_{\mathrm{TV}}
\leq W_1\bigl(p_t,\pi_{n_0}\bigr)
\leq C(n_0,\varepsilon)e^{-(\lambda_{\mathrm{gap}}-\varepsilon)t},
\quad t\geq0,
\end{equation}
where $\lambda_{\mathrm{gap}}$ is the spectral gap of $A$ defined in \eqref{gap}, and
\begin{equation*}
\pi_{n_0} = \sum_{\substack{c\in\mathbb R_{\geq0}^m:\mathcal{B}_c\neq\varnothing}}p_{n_0}(c)\pi_c
\end{equation*}
is the stationary limiting distribution of the system.
\end{theorem}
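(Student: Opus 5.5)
The plan is to split the dynamics at the hitting time $\tau_{\mathcal{B}}$ of Theorem~\ref{thm:hitting_critical}. After $\tau_{\mathcal{B}}$ the process lives in one of the sets $\mathcal{B}_c$, and on each such set it should behave like a stable-regime network. The argument then reduces to the coupling bound of Theorem~\ref{thm:stable_case}, applied uniformly over the absorption configuration.

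\textbf{Step 1: invariance of $\mathcal{B}_c$.} Once $X_t\in\mathcal{B}$, only reactions with reactants in $I_+$ (or zero-order reactions) have positive propensity. Since $\mathcal{B}$ is closed, these reactions do not create species in $I_0$. By the regularity condition, each $\alpha^{(i)}|_{I_+}$ is then orthogonal to every effective jump, so each $\mathcal{B}_c$ is closed. Each $\mathcal{B}_c$ is also finite when $\alpha^{(1)}+\cdots+\alpha^{(m)}$ is positive on the relevant coordinates. Otherwise the remaining coordinates sit in a block of $A$ restricted to $I_+$ that has no zero eigenvalue, and that block is stable.

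\textbf{Step 2: one stationary distribution per $\mathcal{B}_c$.} I would consider the restricted network on $I_+$ with the conserved quantities $\alpha^{(i)}\cdot n=c_i$ fixed. The linear mean dynamics on the affine slice $\{\alpha^{(i)}\cdot z=c_i\}$ is governed by $A$ acting on the complement of $\ker A$. By (C1) its spectrum has real parts at most $-\lambda_{\mathrm{gap}}$. Re-running the stable-case arguments of Theorems~\ref{thm:hitting_exp} and \ref{thm:stable_case} with $-\lambda_{\max}$ replaced by $\lambda_{\mathrm{gap}}$ then gives a unique closed irreducible class inside $\mathcal{B}_c$ and a unique stationary distribution $\pi_c$. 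The same step gives the estimate $W_1(p^{(c)}_t(m_0,\cdot),\pi_c)\le C(m_0,\varepsilon)e^{-(\lambda_{\mathrm{gap}}-\varepsilon)t}$ for $m_0\in\mathcal{B}_c$, with $C$ at most affine in $\|m_0\|_1$.

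The coupling behind this estimate is the natural one: run two copies of the linear network with shared Poisson clocks. Then $\mathbb{E}[X_t-Y_t]$ solves $\dot{u}=Au$. When both copies start in the same $\mathcal{B}_c$, the difference lies in the span of the restricted reaction vectors, and that span is annihilated by $\ker A^{\mathsf T}$. Hence it lies in $\operatorname{range}(A)$, where $e^{At}$ decays at rate $\lambda_{\mathrm{gap}}-\varepsilon$.

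\textbf{Step 3: strong Markov decomposition.} Write
\begin{equation*}
p_t-\pi_{n_0}=\mathbb{E}_{n_0}\bigl[\bigl(p_{t-\tau_{\mathcal{B}}}(X_{\tau_{\mathcal{B}}},\cdot)-\pi_{c(X_{\tau_{\mathcal{B}}})}\bigr)\one_{\{\tau_{\mathcal{B}}\le t\}}\bigr]+\text{(remainder on }\{\tau_{\mathcal{B}}>t\}).
\end{equation*}
By the convexity of $W_1$, this gives
\begin{equation*}
W_1(p_t,\pi_{n_0})\le \mathbb{E}_{n_0}\bigl[C(X_{\tau_{\mathcal{B}}})e^{-(\lambda_{\mathrm{gap}}-\varepsilon)(t-\tau_{\mathcal{B}})}\one_{\{\tau_{\mathcal{B}}\le t\}}\bigr]+\mathbb{E}_{n_0}\bigl[(\|X_t\|_1+\|Y\|_1+\cdots)\one_{\{\tau_{\mathcal{B}}>t\}}\bigr].
\end{equation*}
The first term is controlled by $\mathbb{E}_{n_0}[(1+\|X_{\tau_{\mathcal{B}}}\|_1)e^{(\lambda_{\mathrm{gap}}-\varepsilon')\tau_{\mathcal{B}}}]$. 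This is finite by Theorem~\ref{thm:hitting_critical} combined with Hölder's inequality, using a slightly smaller $\varepsilon$ and polynomial moment bounds on $\sup_{s}\|X_s\|_1$ up to time $t$. Those moment bounds come from the affine mean equation \eqref{eq:mean}; since $\lambda_{\max}=0$ and (C1), (C2) hold, the moments stay bounded. The second term is handled by Cauchy--Schwarz together with $\mathbb{P}(\tau_{\mathcal{B}}>t)\le Ce^{-(\lambda_{\mathrm{gap}}-\varepsilon)t}$. The TV bound then follows from \eqref{eq:TV_vs_W1}.

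\textbf{Main obstacle.} I expect Step 2 to be the hardest part, specifically making the contraction rate exactly $\lambda_{\mathrm{gap}}$. The coupling difference must stay in a subspace on which $e^{At}$ has no zero mode. One also has to check that the constants depend on the absorption state only affinely, so that the mixture over $c$ in Step 3 can be summed with the exponential moment of $\tau_{\mathcal{B}}$. I would try first a synchronous coupling of the restricted network, projecting the difference onto $\operatorname{range}(A)$ via the spectral projector that kills $\ker A$.
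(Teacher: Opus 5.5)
Your outer structure matches the paper's: closedness of each $\mathcal{B}_c$ from regularity plus closedness of $\mathcal{B}$, a classwise $W_1$ bound with constant affine in the starting point, and the strong Markov split at $\tau_{\mathcal B}$ mixed over $c(X_{\tau_{\mathcal B}})$. The gap is in Step 2, which is the core of the theorem.

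The identity $\mathbb{E}[X_t-Y_t]=e^{tA}(n-n')$ with $n-n'\in\operatorname{range}(A)$ holds for \emph{every} coupling, because it uses only the marginals. It controls linear test functions, so via Kantorovich--Rubinstein it gives a \emph{lower} bound on $W_1$ (as in part (ii) of Theorem~\ref{thm:stable_case}), not an upper bound. An upper bound needs $\mathbb{E}\|X_t-Y_t\|_1$ to decay. The stable-case tool for that, the drift of $\Psi(d)=\sum_i v_i|d_i|$ under the common-reaction coupling, needs a strictly positive $v$ on $I_+$ with $v^{\mathsf T}A_{I_+}\le-(\lambda_{\mathrm{gap}}-\varepsilon)v^{\mathsf T}$. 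Whenever some $\alpha^{(k)}\cdot n_0>0$, the persistent species carry zero modes, so no such $v$ exists; there the computation gives only $\widetilde Q\Phi\le0$. Projecting onto $\operatorname{range}(A)$ does not help, since $\|\cdot\|_1$ is not linear.

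The synchronous coupling really does miss the rate. Take $S_1\to S_2\to S_3\to S_1$, all with rate $k$, and one molecule. Conditions (C1), (C2) and regularity hold, and $\sigma(A)=\{0,-\tfrac32k\pm\mathrm{i}\tfrac{\sqrt3}{2}k\}$, so $\lambda_{\mathrm{gap}}=\tfrac32k$. Under the common-reaction coupling, two copies at different species move independently until they meet. From every uncoalesced configuration the meeting rate is exactly $k$, so $\mathbb{E}\|X_t-Y_t\|_1=2e^{-kt}$.

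The missing idea is a separate coupling for the conserved part.
\begin{itemize}
\item The paper uses an adapted non-negative basis with $\alpha^{(k)}|_{I_+}=\mathbf 1_{Z_k}$ on pairwise disjoint source blocks $Z_k$.
\item Regularity then forces each source molecule to stay in its block, possibly emitting molecules into $D=I_+\setminus Z$. Its position is therefore a finite irreducible Markov chain whose nonzero eigenvalues are eigenvalues of $A$.
\item $A_D$ is a diagonal block of a block-triangular form, hence Hurwitz with spectral bound at most $-\lambda_{\mathrm{gap}}$.
\item Labelled source particles are paired and coupled by maximal couplings of $p_T^{(k)}(i,\cdot)$ and $p_T^{(k)}(i',\cdot)$ on grid intervals of length $T$, completed by marked bridges. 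For large $T$ this contracts at any rate below $\lambda_{\mathrm{gap}}$.
\item Only the downstream coordinates, driven by the marked source paths, use the stable-case coupling.
\end{itemize}
Without this, or an equivalent contraction argument for the finite conserved chains, Step 2 gives neither uniqueness of $\pi_c$ nor the rate $\lambda_{\mathrm{gap}}-\varepsilon$.

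A secondary issue is in Step 3. The mean equation gives only first moments, not moments of $\sup_s\|X_s\|_1$. Also, Cauchy--Schwarz against $\mathbb{P}(\tau_{\mathcal B}>t)\le Ce^{-(\lambda_{\mathrm{gap}}-\varepsilon)t}$ halves the exponent. You need one of two fixes:
\begin{itemize}
\item H\"older with conjugate exponent close to $1$, plus uniform higher moments from an exponential Lyapunov function; or
\item directly the paper's weighted estimate $\mathbb{E}_{n_0}[(1+\|X_{\tau_{\mathcal B}}\|_1)e^{(\lambda_{\mathrm{gap}}-\varepsilon)\tau_{\mathcal B}}]+\sup_t e^{(\lambda_{\mathrm{gap}}-\varepsilon)t}\mathbb{E}_{n_0}[\mathbf 1_{\{\tau_{\mathcal B}>t\}}(1+\|X_t\|_1)]<\infty$.
\end{itemize}
In either case, use $m_c\le C_0(1+L_c)$ to control the stationary moment $m_{c(X_{\tau_{\mathcal B}})}$ on $\{\tau_{\mathcal B}>t\}$.
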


\begin{proof}
The proof is given in Section~\ref{sec:critical_regular_proof} using coupling techniques.
\end{proof}


\begin{remark}\label{rem:basis_independence}
Note that the definition of $\mathcal{B}_c$ in \eqref{Ac} depends on the choice of the non-negative basis $\alpha = \{\alpha^{(1)},\cdots,\alpha^{(m)}\}$ of the left nullspace of $A$. If another non-negative basis $\beta = \{\beta^{(1)},\cdots,\beta^{(m)}\}$ is chosen, let $O = (O_{ij})$ be the invertible matrix representing the change of basis, i.e.
\begin{equation*}
\beta^{(i)} = \sum_{j=1}^m O_{ij}\,\alpha^{(j)}.
\end{equation*}
Then it is straightforward to verify that
\begin{equation*}
\mathcal{B}_c^{(\alpha)} = \mathcal{B}_{O c}^{(\beta)}.
\end{equation*}
Therefore, the partition $\mathcal{B} = \bigcup_{c}\mathcal{B}_{c}$ of the absorbing set $\mathcal{B}$ is independent of the choice of the non-negative basis. In particular, each subset $\mathcal{B}_c^{(\alpha)}$ defined under the basis $\alpha$ corresponds exactly to the subset $\mathcal{B}_{O c}^{(\beta)}$ defined under the basis $\beta$.
\end{remark}

\begin{theorem}\label{thm:critical_sharpness}
Assume that $\lambda_{\max}=0$, that conditions (C1) and (C2) are satisfied, and that $\lambda_{\rm gap}<\infty$. Then there exist an initial state $n_0\in E$ and a constant $c>0$ such that, for all sufficiently large $t$,
\begin{equation}\label{optimalcritical}
W_1\bigl(p_t,\pi_{n_0}\bigr) \geq \bigl\|p_t-\pi_{n_0}\bigr\|_{\mathrm{TV}}
\geq c e^{-\lambda_{\mathrm{gap}}t},
\end{equation}
where $\pi_{n_0}$ is the stationary limiting distribution of the system.
\end{theorem}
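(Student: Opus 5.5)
The plan is to get both inequalities in \eqref{optimalcritical} from an explicit linear observable whose mean decays exactly like $e^{-\lambda_{\mathrm{gap}}t}$, and then convert this into a total-variation bound. The first inequality $W_1\ge\|\cdot\|_{\mathrm{TV}}$ is just \eqref{eq:TV_vs_W1}, so the real content is the lower bound on the total variation distance. I expect to obtain the $W_1$ bound by a fairly direct argument, while the total-variation part carries the main difficulty.

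\emph{Step 1 (a slowly decaying linear observable).} Since $\lambda_{\mathrm{gap}}<\infty$, pick $\lambda\in\sigma(A)$ with $\lambda\ne0$ and $\operatorname{Re}\lambda=-\lambda_{\mathrm{gap}}$, together with a left eigenvector $u\in\mathbb{C}^N$, $u^{\mathsf T}A=\lambda u^{\mathsf T}$. By \eqref{eq:mean},
\begin{equation*}
\frac{d}{dt}\,u\cdot\mathbb{E}_{n_0}[X_t]=\lambda\,u\cdot\mathbb{E}_{n_0}[X_t]+u\cdot b .
\end{equation*}
Hence
\begin{equation*}
u\cdot\mathbb{E}_{n_0}[X_t]+\frac{u\cdot b}{\lambda}=e^{\lambda t}\Big(u\cdot n_0+\frac{u\cdot b}{\lambda}\Big).
\end{equation*}
The map $n_0\mapsto u\cdot n_0+u\cdot b/\lambda$ is affine with nonzero linear part, so some $n_0=ke_j\in E$ makes it nonzero. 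I fix such an $n_0$.

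\emph{Step 2 (identifying the limit mean).} Next I need $u\cdot\sum_n n\,\pi_{n_0}(n)=-u\cdot b/\lambda$, i.e. the mean of $\pi_{n_0}$ equals $\lim_t\mathbb{E}_{n_0}[X_t]=z^*$. I would obtain this from uniform integrability: the second moments of a linear network satisfy a closed linear ODE. Under (C1)--(C2) this ODE stays bounded, because the quadratic forms $(\alpha\cdot x)^2$ with $\alpha\in\ker A^{\mathsf T}$ have bounded drift, and Theorem~\ref{thm:hitting_critical} controls the time before absorption. Together with $p_t\to\pi_{n_0}$, this gives convergence of first moments.

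\emph{Step 2, consequence.} Taking real or imaginary parts, $f(x)=\operatorname{Re}(e^{i\theta}u\cdot x)$ is Lipschitz, and $|\mathbb{E}f(X_t)-\pi_{n_0}f|= c_0e^{-\lambda_{\mathrm{gap}}t}|\cos(\omega t+\theta')|$, where $\omega=\operatorname{Im}\lambda$. Choosing between the real and imaginary parts at each $t$ removes the oscillation, which yields the $W_1$ lower bound via \eqref{eq:KR_duality}.

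\emph{Step 3 (total variation; main obstacle).} A linear test function is unbounded, and a naive truncation using uniform moment bounds only gives $\|p_t-\pi_{n_0}\|_{\mathrm{TV}}\ge c\,e^{-\lambda_{\mathrm{gap}}t}/\mathrm{poly}(t)$ or a worse exponent. To get the sharp rate, I would use the bounded test functions $g_s(x)=\prod_i s_i^{x_i}$ with $s\in[0,1]^N$, which satisfy $|g_s|\le1$. These give
\begin{equation*}
|\mathbb{E}_{n_0}g_s(X_t)-\pi_{n_0}g_s|\le 2\|p_t-\pi_{n_0}\|_{\mathrm{TV}} .
\end{equation*}
A linear network is a multitype branching process with immigration. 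Consequently $G(t,s)=\mathbb{E}_{n_0}s^{X_t}=\prod_i\phi_i(t,s)^{(n_0)_i}\exp\!\big(\int_0^t\sum_jk_{0j}(\phi(r,s)^{\nu_{0j}}-1)\,dr\big)$, where $\phi$ solves the backward Kolmogorov ODE $\dot\phi=F(\phi)$, $\phi(0)=s$.

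Linearizing $F$ at the fixed point $s=\mathbf{1}$ gives the matrix $A^{\mathsf T}$. I would then take $s=\mathbf{1}-\delta v$, with $v$ the real or imaginary part of $u$ suitably shifted to make $s\in[0,1]^N$. The mode $\lambda$ should then contribute a term $\delta\,e^{\lambda t}(u\cdot n_0+u\cdot b/\lambda)$ to $G(t,s)-G(\infty,s)$. The remaining contributions come either from faster modes or from terms of order $\delta^2$ that decay at least like $e^{-\lambda_{\mathrm{gap}}t}$ with a coefficient of order $\delta^2$.

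The delicate point is exactly this last claim. Jordan blocks or resonances $\lambda_i+\lambda_j$ might create factors $te^{-\lambda_{\mathrm{gap}}t}$ at second order. To handle it, I would first try to show that the normalized difference $e^{\lambda_{\mathrm{gap}}t}[G(t,\mathbf{1}-\delta v)-G(\infty,\mathbf{1}-\delta v)]$ is, uniformly in $t\ge0$, a $C^1$ function of $\delta$ whose derivative at $\delta=0$ is the nonzero quantity from Step 1. One would then fix $\delta$ small enough that it stays bounded away from zero.

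As a fallback, I would choose $n_0$ so that the process is confined to a finite set, for instance when a strictly positive left null vector exists. In that case the lower bound reduces to the finite-dimensional fact that $\lambda$ is an eigenvalue of the generator with a nonvanishing projection.
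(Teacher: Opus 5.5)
\textbf{Main gap: the total-variation bound is never proved.} The total-variation bound is the real content of the theorem, and your proposal leaves it open. The key claim in Step~3, that the $O(\delta^2)$ part of $G(t,\mathbf 1-\delta v)-G(\infty,\mathbf 1-\delta v)$ is $O(\delta^2e^{-\lambda_{\mathrm{gap}}t})$ uniformly in $t$, is only something you ``would try to show''. In the critical regime it is not a routine perturbation estimate.

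The fixed point $s=\mathbf 1$ of the backward flow is non-hyperbolic. Along the zero modes of $A^{\mathsf T}$ the flow converges to a point $\phi_\infty(s)$, which in general differs from $\mathbf 1$. Relaxation is then governed by $DF(\phi_\infty(s))$, not by $A^{\mathsf T}$, and the quadratic terms pair non-decaying zero-mode components with the $\lambda$-mode.

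A two-species network shows the danger. Take $S_1\xrightarrow{k_c}S_1+S_2$, $S_1\xrightarrow{\gamma}\varnothing$ with $n_0=e_1$; this is critical, satisfies (C1)--(C2), is regular, and has $\lambda_{\mathrm{gap}}=\gamma$. A direct computation gives
\[
G(t,s)-G(\infty,s)=\bigl(s_1-\phi_\infty\bigr)e^{-(\gamma+k_c(1-s_2))t},
\qquad
\phi_\infty=\frac{\gamma}{\gamma+k_c(1-s_2)}.
\]
So every test function $s^n$ with $s_2<1$ decays strictly faster than $e^{-\lambda_{\mathrm{gap}}t}$. The terms $\delta^kt^{k-1}e^{-\gamma t}$ you were worried about resum into a larger exponent, and no fixed small $\delta$ repairs this.

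Here $u=e_1$ needs no shift. But suppose you had used the shift $v=u+C\,(k_c/\gamma,1)$, $C>0$, along the non-negative left null vector, which is the natural way to keep your first-order term intact. Then the test function would decay at rate $\gamma+k_cC\delta>\lambda_{\mathrm{gap}}$.

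Whenever $u$ has mixed signs, some shift is unavoidable. This happens, for instance, when $\lambda$ comes from a downstream class fed by a source class, or when $\lambda$ is a nonzero eigenvalue of a zero-eigenvalue class. Nothing in your argument controls which coordinates the shift loads. Your fallback only covers processes confined to a finite set, which excludes most networks with immigration or downstream production.

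\textbf{Step~2 is false as stated, and how to close the argument.} Conditions (C1)--(C2) do not keep second moments bounded. If regularity fails for your $n_0=ke_j$, then for an irregular basis vector $\alpha$ the martingale $\alpha\cdot X_t$ tends to $0$ almost surely while its mean stays constant. Its second moment grows linearly, and the mean of $\pi_{n_0}$ is not $z^*$; this is exactly why $W_1$ fails in Theorem~\ref{thm:critical_irregular}.

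Both problems go away if you use your freedom in choosing $n_0$:
\begin{itemize}
\item \emph{Irregular initial states} are trivial. Proposition~\ref{prop:joint_irregular_extinction}, together with the last step of the proof of Theorem~\ref{thm:critical_irregular}, gives $\|p_t-\pi_{n_0}\|_{\mathrm{TV}}\ge c_0/(1+t)$.
\item \emph{Regular initial states.} Theorem~\ref{thm:critical_regular} gives $W_1$-convergence, which already identifies the limit mean. For total variation you can drop generating functions altogether. Project onto an autonomous sub-configuration; total variation cannot increase under a measurable map.
\end{itemize}

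For the regular case there are two situations, according to where the gap is attained.

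\emph{Zero-eigenvalue class.} Suppose the gap is attained at a nonzero eigenvalue $\lambda$ of a zero-eigenvalue class $Y$, with $u_Y^{\mathsf T}A_Y=\lambda u_Y^{\mathsf T}$. Start from $n_0=e_i$ with $i\in Y$ and $(u_Y)_i\neq0$. By (C1)--(C2) nothing else feeds $Y$, so $X^Y_t$ is an autonomous chain confined to the finite level set of the non-negative left null vector attached to $Y$. On that set, $n\mapsto u_Y\cdot n_Y$ is a bounded eigenfunction with eigenvalue $\lambda$. This is your fallback argument, applied to a projection.

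\emph{Class without zero eigenvalue.} Suppose the gap is attained in a class $C$ with no zero eigenvalue; its Perron root then equals $-\lambda_{\mathrm{gap}}$. There are two sub-cases.
\begin{itemize}
\item $C$ is reachable from the zero-order reactions. Then (C2) forces the species reachable from them to contain no zero-eigenvalue class. They therefore form a stable network with $\lambda_{\max}=-\lambda_{\mathrm{gap}}$, and Corollary~\ref{cor:TV_stable} applies.
\item $C$ is not reachable from them. Then $X^C_t$ started on $C$ is an autonomous stable network with stationary law $\delta_{\mathbf 0}$, and Corollary~\ref{cor:TV_stable} applies to it.
\end{itemize}
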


\begin{proof}
The proof is given in Supplementary Section~S3.
\end{proof}

Recall that, in the stable regime, we have shown that $-\lambda_{\max}$ is the optimal exponential convergence rate. In the critical regime, when the regularity condition holds for every initial state, Theorems~\ref{thm:critical_regular} and \ref{thm:critical_sharpness} together show that $\lambda_{\rm gap}$ is the optimal uniform exponential convergence rate. If the regularity condition fails for some initial state, exponential convergence fails for that state, as shown below in Theorem~\ref{thm:critical_irregular}.

The following corollary shows that if every left null vector of $A$ is a conservation law for the original (rather than restricted) reaction network, then the regularity condition in Definition \ref{regular} is automatically satisfied, and hence exponential convergence holds.

\begin{corollary}\label{cor:fully_conservative}
Assume that $\lambda_{\max}=0$ and that conditions (C1) and (C2) are satisfied. If every left null vector of $A$ is a conservation law for the original reaction network, then the regularity condition in Definition~\ref{regular} is automatically satisfied. Moreover, for every initial state $n_0\in E$, the time-dependent distribution $p_t$ converges exponentially fast to a stationary limiting distribution $\pi_{n_0}$ under both the $L^1$-Wasserstein and total variation distances, as in Eq.~\eqref{criticalestimate}, and $\lambda_{\rm gap}$ is the optimal uniform exponential convergence rate in the sense of Eq.~\eqref{optimalcritical}.
\end{corollary}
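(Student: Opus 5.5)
The proof reduces the corollary to the two preceding theorems. The only genuinely new ingredient is the first claim: a conservation law of the full network restricts to a conservation law of the network restricted to $I_+$. Once that is established, Theorem~\ref{thm:critical_regular} and Theorem~\ref{thm:critical_sharpness} give the convergence and the optimality.

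\textbf{Step 1 (regularity).} Fix an initial state $n_0$, with the associated sets $I_+$, $I_0$ and $\mathcal{B}$. Take $\alpha\in\ker(A^{\mathsf T})$; by hypothesis it is a conservation law, so $\alpha\cdot w_R=0$ for every reaction $R\in\mathcal{R}$. Now let $R|_{I_+}\in\mathcal{R}'$. By Definition~\ref{def:restriction_subnetwork}, its source satisfies $\mu_R|_{I_0}=0$, so $R$ is either zero-order or first-order with reactant in $I_+$. I will decompose
\begin{equation*}
0=\alpha\cdot w_R=\alpha|_{I_+}\cdot w_R|_{I_+}+\sum_{i\in I_0}\alpha_i\,\nu^R_i .
\end{equation*}
It therefore suffices to show that $\alpha_i\nu_i^R=0$ for every $i\in I_0$, i.e. that such a reaction produces no species from $I_0$ that carries weight in $\alpha$. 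This is the main obstacle, and I would argue it via closedness of $\mathcal{B}$.

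By Theorem~\ref{thm:hitting_critical}, $\mathcal{B}$ is closed. Since $z^*$ is an equilibrium in the closure of the nonnegative orbit, $\mathcal{B}$ is nonempty. Suppose $R$ is zero-order, or first-order with reactant $S_\ell$, $\ell\in I_+$. Then at a state $n\in\mathcal{B}$ with $n_\ell>0$, its propensity is positive. Such a state exists because $\mathcal{B}$ imposes no constraint on the $I_+$ coordinates. Firing $R$ from $n$ must stay in $\mathcal{B}$, which forces $\nu^R_i=0$ for all $i\in I_0$. Hence the $I_0$ term vanishes for every $\alpha$, and $\alpha|_{I_+}\cdot w_R|_{I_+}=0$. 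This is exactly the regularity condition of Definition~\ref{regular}. Note that the argument does not even use $\alpha_i$, only closedness.

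\textbf{Step 2 (exponential convergence).} The regularity condition now holds for every $n_0$, so Theorem~\ref{thm:critical_regular} applies to each initial state. It yields the stationary limiting distribution $\pi_{n_0}=\sum_c p_{n_0}(c)\pi_c$ and the estimate \eqref{criticalestimate} with rate $\lambda_{\mathrm{gap}}-\varepsilon$, under both $W_1$ and total variation.

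\textbf{Step 3 (optimality).} If $\lambda_{\mathrm{gap}}=\infty$, the convention stated after Proposition~\ref{prop:det_longtime} makes the uniform-rate claim trivial. Otherwise, Theorem~\ref{thm:critical_sharpness} provides an initial state $n_0$ and $c>0$ such that \eqref{optimalcritical} holds for large $t$. Combined with Step 2, which holds for all initial states, this shows that no uniform exponential rate exceeding $\lambda_{\mathrm{gap}}$ is possible, while every rate $\lambda_{\mathrm{gap}}-\varepsilon$ is attained. Hence $\lambda_{\mathrm{gap}}$ is the optimal uniform rate.
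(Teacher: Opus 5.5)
Your proposal is correct and follows the paper's proof: closedness of $\mathcal{B}$ (Theorem~\ref{thm:hitting_critical}), applied to a firing of $R$ from $\mathbf{0}$ or from $e_\ell$ with $\ell\in I_+$, forces $w_R|_{I_0}=0$ for every retained reaction, so $\alpha|_{I_+}\cdot w_R|_{I_+}=\alpha\cdot w_R=0$, and Theorems~\ref{thm:critical_regular} and~\ref{thm:critical_sharpness} then give exponential convergence and the optimality of $\lambda_{\mathrm{gap}}$. The only cosmetic point is that $\mathcal{B}$ is trivially nonempty because $\mathbf{0}\in\mathcal{B}$, so the appeal to $z^*$ is unnecessary.
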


\begin{proof}
Fix an initial state $n_0\in E$ and restrict the linear reaction network to $I_+$. Let $R|_{I_+}\in\mathcal{R}'$ be a reaction in the restricted network. By Definition~\ref{def:restriction_subnetwork}, the restricted network retains only those reactions $R\in\mathcal{R}$ whose reactant complexes involve species exclusively from $I_+$. Thus, $R$ is either a zero-order reaction $R_{0j}$ or a first-order reaction $R_{ij}$ for some $i\in I_+$. In either case, its source complex contains no species in $I_0$, which implies that $\mu_R|_{I_0}=0$.

Moreover, Theorem~\ref{thm:hitting_critical} shows that the absorbing set $\mathcal{B}$ defined in Eq.~\eqref{eq:def_R} is closed. Clearly, the zero vector $\mathbf{0}\in\mathcal{B}$ and the standard basis vector $e_i\in\mathcal{B}$ whenever $i\in I_+$. If $R=R_{0j}$ is a zero-order reaction, then it induces a transition from microstate $\mathbf{0}$ to microstate $\nu_R=\nu_{0j}$ with rate $k_{0j}$; if $R=R_{ij}$ is a first-order reaction for some $i\in I_+$, then it induces a transition from microstate $e_i$ to microstate $\nu_R=\nu_{ij}$ with rate $k_{ij}$. In either case, we have $\nu_R\in\mathcal{B}$ since $\mathcal{B}$ is closed. Therefore, $\nu_R|_{I_0}=0$ and hence
\begin{equation*}
w_R|_{I_0} = \nu_R|_{I_0}-\mu_R|_{I_0} = 0.
\end{equation*}

Since every left null vector $\alpha$ of $A$ is a conservation law for the original reaction network, we have $\alpha\cdot w_R=0$ for each reaction $R$. Consequently,
\begin{equation*}
\alpha|_{I_+}\cdot w_R|_{I_+}
= \alpha\cdot w_R-\alpha|_{I_0}\cdot w_R|_{I_0}
= \alpha\cdot w_R
= 0.
\end{equation*}
Hence, the regularity condition in Definition~\ref{regular} is satisfied for the given initial state $n_0\in E$. Since $n_0$ is arbitrary, the regularity condition holds for all initial states.  By Theorem~\ref{thm:critical_regular}, for every initial state $n_0\in E$, the system converges exponentially fast to a stationary limiting distribution $\pi_{n_0}$. Together with Theorem~\ref{thm:critical_sharpness}, this shows that $\lambda_{\rm gap}$ is the optimal uniform exponential convergence rate.
\end{proof}

To better illustrate Theorem \ref{thm:critical_regular} and Corollary \ref{cor:fully_conservative}, we consider two examples.

\begin{example}\label{ex:telegraph}
Consider the canonical three-stage gene expression model involving gene state switching, transcription, and translation \cite{shahrezaei2008analytical}, with the following reaction scheme:
\begin{equation*}
\begin{gathered}
G_{\mathrm{off}}\xrightarrow{k_{\mathrm{on}}}G_{\mathrm{on}},\quad
G_{\mathrm{on}}\xrightarrow{k_{\mathrm{off}}}G_{\mathrm{off}},\\
G_{\mathrm{on}}\xrightarrow{k_m}G_{\mathrm{on}}+M, \quad M\xrightarrow{k_p}M+P,\\
M\xrightarrow{\gamma_m}\varnothing, \quad P\xrightarrow{\gamma_p}\varnothing.
\end{gathered}
\end{equation*}
Here, the gene of interest switches between the active state $G_{\mathrm{on}}$ and the inactive state $G_{\mathrm{off}}$. The first two reactions describe gene state switching, the middle two reactions describe the synthesis of mRNA $M$ and protein $P$, and the last two reactions describe the degradation of mRNA and protein. Transcription occurs only when the gene is in the active state. Note that in Example~\ref{ex:gene_expression}, mRNA is produced constitutively. In contrast, the current example captures the bursty production of mRNA \cite{shahrezaei2008analytical, jia2017simplification}.

With $S_1=G_{\mathrm{off}}$, $S_2=G_{\mathrm{on}}$, $S_3=M$, and $S_4=P$, the first-order influx matrix $A$ and the zero-order influx vector $b$ are given by
\begin{equation*}
A = \begin{pmatrix}
-k_{\mathrm{on}} & k_{\mathrm{off}} & 0 & 0\\
k_{\mathrm{on}} & -k_{\mathrm{off}} & 0 & 0\\
0 & k_m & -\gamma_m & 0\\
0 & 0 & k_p & -\gamma_p
\end{pmatrix}, \quad
b = \begin{pmatrix}
0 \\ 0 \\ 0 \\ 0
\end{pmatrix}.
\end{equation*}
It is clear that $A$ has a zero eigenvalue, while its nonzero eigenvalues are $-(k_{\mathrm{on}}+k_{\mathrm{off}})$, $-\gamma_m$, and $-\gamma_p$. Therefore, the zero eigenvalue is simple, and
\begin{equation*}
\lambda_{\mathrm{gap}} = \min\{k_{\mathrm{on}}+k_{\mathrm{off}},\gamma_m,\gamma_p\}.
\end{equation*}
The left nullspace of $A$ is one-dimensional and is spanned by $\alpha = (1,1,0,0)$. It is straightforward to verify that conditions (C1) and (C2) are satisfied and that $\alpha$ is orthogonal to all reaction vectors. Therefore, every left null vector of $A$ is a conservation law.

Suppose that there is only one gene copy within a cell. For any initial state $n_0\in E$ satisfying $G_{\mathrm{off}}+G_{\mathrm{on}}=1$, the system evolves within the closed irreducible class
\begin{equation*}
\mathcal{D} = \{n = (n_1,n_2,n_3,n_4)\in E:n_1+n_2=1\},
\end{equation*}
which admits a unique stationary distribution $\pi_{n_0}$. By Corollary~\ref{cor:fully_conservative}, the time-dependent distribution $p_t$ converges exponentially fast to $\pi_{n_0}$ under both the $L^1$-Wasserstein and total variation distances, with optimal exponential convergence rate $\lambda_{\mathrm{gap}}$. Hence the system is exponentially ergodic within this positive recurrent class. We emphasize that the stationary mRNA distribution for the three-stage gene expression model can be obtained in closed form \cite{peccoud1995markovian}, whereas the exact protein distribution remains an open problem.
\end{example}

\begin{example}\label{ex:closed_monomolecular}
In Example \ref{ex:open_monomolecular}, we investigated the long-time behavior of the stochastic dynamics of an open monomolecular reaction network. Here, we consider a closed monomolecular reaction network consisting only of conversion reactions $S_i\to S_j$. Recall that a reaction network is called \emph{weakly reversible} if every connected component of the reaction graph is strongly connected, i.e. every connected component of the underlying undirected reaction graph is also a connected component of the directed reaction graph \cite{anderson2010product}. For simplicity, we assume that the closed monomolecular network under consideration is weakly reversible.

Let $V_1,\cdots,V_m$ denote the connected components of the underlying undirected reaction graph. All reactions within $V_\ell$ have the form
\begin{equation*}
S_i\xrightarrow{k_{ij}}S_j,\quad i,j\in V_\ell,\quad i\ne j,
\end{equation*}
where $k_{ij}\ge0$, with $k_{ij}=0$ if the corresponding reaction is absent. Since no reaction connects two distinct connected components, after relabeling the species, the first-order influx matrix $A$ and the zero-order influx vector $b$ can be written as
\begin{equation*}
A = \begin{pmatrix}
A_1 & 0 & \cdots & 0\\
0 & A_2 & \cdots & 0\\
\vdots & \vdots & \ddots & \vdots\\
0 & 0 & \cdots & A_m
\end{pmatrix},
\quad
b=0,
\end{equation*}
where each block $A_\ell$ is indexed by $V_\ell$ and satisfies
\begin{equation*}
(A_\ell)_{ji} = k_{ij}, \quad i\ne j, \quad
(A_\ell)_{ii} = -\sum_{\substack{j\in V_\ell,\;j\ne i}}k_{ij}.
\end{equation*}
Since the reaction network is weakly reversible, each $V_\ell$ is strongly connected, and hence $A_\ell$ is an irreducible Metzler matrix. Moreover, since the columns of $A_\ell$ sum to zero, $A_\ell^{\mathsf T}$ is the generator matrix of an irreducible finite-state continuous-time Markov chain describing the conversion of species within $V_\ell$. The Perron-Frobenius theorem then implies that zero is a simple eigenvalue of $A_\ell$, while all other eigenvalues have negative real parts. Hence, $\lambda_{\max}=0$, and the algebraic and geometric multiplicities of the zero eigenvalue of $A$ are both equal to $m$. Thus, condition (C1) holds. Since the reaction network contains no zero-order reactions, we have $b=0$, and condition (C2) also holds.

Since the zero eigenvalue of each block $A_\ell$ is simple, we have
\begin{equation*}
\ker(A^{\mathsf T}) = \operatorname{span}\{\mathbf{1}_{V_1},\cdots,\mathbf{1}_{V_m}\},
\end{equation*}
where $\mathbf{1}_{V_{\ell}}$ is the vector whose components are $1$ for indices in $V_{\ell}$ and $0$ otherwise. Every conversion reaction preserves the total number of molecules within its connected component. Therefore, every left null vector $\alpha\in\ker(A^{\mathsf T})$ of $A$ is a conservation law. For any initial state $n_0\in E$, the total number of molecules
\begin{equation*}
N_\ell = \sum_{i\in V_\ell}(n_{0})_i, \quad \ell=1,\cdots,m,
\end{equation*}
in each connected component $V_\ell$ is conserved. Hence, the system evolves within the closed irreducible class
\begin{equation*}
\mathcal{D} = \{n\in E:\sum_{i\in V_\ell}n_i=N_\ell,\;\ell=1,\cdots,m\},
\end{equation*}
which admits a unique stationary distribution $\pi_{n_0}$. By Corollary~\ref{cor:fully_conservative}, the time-dependent distribution $p_t$ converges exponentially fast to the stationary distribution $\pi_{n_0}$ under both the $L^1$-Wasserstein and total variation distances, with optimal exponential convergence rate $\lambda_{\mathrm{gap}}$. In fact, the stationary distribution $\pi_{n_0}$ of the stochastic system has the following multinomial-product form \cite{jahnke2007solving, anderson2010product}:
\begin{equation*}
\pi_{n_0}(n) = \prod_{\ell:N_\ell>0}\left[N_\ell!\prod_{i\in V_\ell}\frac{(z_i^*/N_\ell)^{n_i}}{n_i!}\right],
\quad n\in\mathcal{D}.
\end{equation*}
\end{example}

We have shown that the regularity condition in Definition \ref{regular} is crucial for exponential convergence of the stochastic dynamics in the critical regime. The following theorem characterizes the long-time behavior of the system when the regularity condition is violated.

\begin{theorem}\label{thm:critical_irregular}
Fix an initial state $n_0\in E$. Assume that $\lambda_{\max}=0$, that conditions (C1) and (C2) are satisfied, and that the regularity condition in Definition~\ref{regular} is violated. Then the time-dependent distribution $p_t$ converges in total variation to a stationary limiting distribution $\pi_{n_0}$ supported on the absorbing set $\mathcal{B}$, i.e.
\begin{equation*}
\lim_{t\to\infty} \bigl\|p_t-\pi_{n_0}\bigr\|_{\mathrm{TV}}=0.
\end{equation*}
However, the convergence does not hold under the $L^1$-Wasserstein distance, i.e. $W_1(p_t,\pi_{n_0})\not\to0$ as $t\to\infty$. Moreover, exponential convergence fails under both the $L^1$-Wasserstein and total variation distances.
\end{theorem}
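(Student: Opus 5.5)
The plan is to exploit a martingale that the deterministic picture hides. By Proposition~\ref{prop:critical_affine} we may fix a non-negative basis $\{\alpha^{(1)},\cdots,\alpha^{(m)}\}$ of $\ker(A^{\mathsf T})$. Since regularity fails, some left null vector $\alpha\ge0$ satisfies $\alpha|_{I_+}\cdot w_R|_{I_+}\neq0$ for some restricted reaction $R|_{I_+}\in\mathcal{R}'$.

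\textbf{Martingale and absorption.} The drift of $\alpha\cdot X_t$ at state $n$ equals $\alpha\cdot(An+b)=0$ by (C2) and $\alpha^{\mathsf T}A=0$. Hence $M_t=\alpha\cdot X_t$ is a non-negative local martingale. Using the linear moment bounds behind Theorem~\ref{thm:non-explosive}, it is a true martingale with $\mathbb{E}_{n_0}[M_t]=\alpha\cdot n_0$ for all $t$, so it converges almost surely. Its jumps take finitely many values $\alpha\cdot w_R$, and the nonzero ones are bounded below in modulus by some $\delta>0$. Almost sure convergence therefore forces only finitely many $\alpha$-changing jumps. By Theorem~\ref{thm:hitting_critical}, the path eventually enters $\mathcal{B}$. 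By a Borel--Cantelli/Lévy argument, the propensities of all $\alpha$-changing reactions must then eventually vanish; otherwise such a reaction would fire infinitely often. These reactions are first-order, or else zero-order, which is impossible since their propensity never vanishes. So their reactant species go extinct, and iterating over the finitely many null vectors shows that the process a.s. enters, at a finite random time $T$, a closed subset $\mathcal{B}'\subseteq\mathcal{B}$. On $\mathcal{B}'$ every $\alpha^{(i)}\cdot n$ is conserved.

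\textbf{TV convergence.} Next I will show that each level set $\{n\in\mathcal{B}':\alpha^{(i)}\cdot n=c_i\ \forall i\}$ is finite. The idea is that every surviving species lies in the support of some non-negative null vector: a species outside all supports would have to be governed by the stable part of the spectrum and hence be absorbed as in Theorem~\ref{thm:hitting_critical}. On each finite closed set the chain converges in total variation to a stationary law. Conditioning on $\mathcal{F}_T$ and using dominated convergence then gives $\|p_t-\pi_{n_0}\|_{\mathrm{TV}}\to0$, where $\pi_{n_0}$ is the mixture of these limits weighted by the law of $(X_T$'s class$)$.

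\textbf{Failure of $W_1$ convergence.} Suppose $W_1(p_t,\pi_{n_0})\to0$. Since $n\mapsto\alpha\cdot n$ is Lipschitz, this would give $\mathbb{E}_{\pi_{n_0}}[\alpha\cdot n]=\alpha\cdot n_0$, which makes $M_t$ uniformly integrable. I will contradict this by showing $\mathbb{E}[M_\infty]<\alpha\cdot n_0$. The heuristic is that the species driving the $\alpha$-changing restricted reaction belongs to $I_+$, so it is deterministically persistent. Yet in the stochastic system it is killed with probability one, and this loss must be compensated by rare large excursions of $M_t$. To make this rigorous I would compare $\mathbb{E}[M_\infty]$ with the deterministic limit $\alpha\cdot z^*$, using that $\mathbb{E}X_t=z(t)\to z^*$ with $z^*_i>0$ on $I_+$, while the limit law puts zero mass on that species. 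This is the step I expect to be the main obstacle. Identifying exactly which moment is lost may require a case analysis on the reaction $R$ violating regularity.

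\textbf{Non-exponential convergence.} If TV convergence were exponential, then $\mathbb{P}(T>t)$ would decay exponentially. Combining this with polynomial-in-time bounds on $\mathbb{E}\|X_t\|_1^2$, which follow from the affine moment equations under (C1), Cauchy--Schwarz gives uniform integrability of $\|X_t\|_1$ and hence $W_1$ convergence, contradicting the previous step. Exponential $W_1$ convergence is excluded a fortiori.
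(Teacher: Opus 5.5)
\textbf{The gap is in your TV step.} It rests on the claim that the level sets $\{n\in\mathcal{B}':\alpha^{(i)}\cdot n=c_i\ \forall i\}$ are finite, and this claim is false.

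Being ``governed by the stable part of the spectrum'' does not mean being absorbed. Theorem~\ref{thm:hitting_critical} only removes the species in $I_0$. Downstream species fed by zero-order reactions or by source blocks persist with unbounded copy numbers, as $M$ and $P$ do in Example~\ref{ex:telegraph}.

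A concrete counterexample: take $S_1\xrightarrow{k}2S_1$, $S_1\xrightarrow{k}\varnothing$, $\varnothing\to S_2\to\varnothing$ with $n_0=(1,0)$. This network satisfies (C1)--(C2), violates regularity, and has $\ker(A^{\mathsf T})=\operatorname{span}\{e_1\}$. Its terminal set $\{n_1=0\}$ is infinite.

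So after $T$ you still need positive recurrence and convergence on infinite classes, and that is the substantive part your proposal skips. The paper supplies it in Proposition~\ref{prop:irregular_reduced_dynamics}. On the closed set $\mathcal{B}\cap E_0$, the surviving species $J$ form an autonomous linear network. This network is either stable, or critical with (C1)--(C2) and every null vector a conservation law. Hence Theorems~\ref{thm:stable_case} and~\ref{thm:critical_regular} apply, together with the uniform moment bound \eqref{eq:irregular_stationary_moment}. Your indirect non-exponentiality argument inherits the same gap: uniform integrability on $\{T\le t\}$ needs moment control on those infinite classes.

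\textbf{Your $W_1$ step is left open, and the proposed comparison gives nothing.} By Proposition~\ref{prop:critical_affine}, $\alpha\cdot z^*=\alpha\cdot n_0$, so comparing $\mathbb{E}M_\infty$ with $\alpha\cdot z^*$ adds no information. You get stuck because, for a general non-negative basis vector, $M_\infty$ need not vanish: $\alpha$ may also charge regular blocks. The paper avoids this by using the adapted basis, for which the irregular martingale $\alpha^{(k)}\cdot X_t$ itself hits $0$. Then $\pi_{n_0}$ lives on $E_0$, and $W_1\ge\alpha\cdot n_0/\|\alpha\|_\infty$ for all $t$. The paper then uses a separate heavy-tail estimate, $\mathbb{P}(\tau_{\mathrm{irr}}>t)\ge c_0/(1+t)$, for the total-variation part.

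Your Step~1 already suffices if you test against a coordinate rather than against $\alpha\cdot n$.
\begin{itemize}
\item The violating restricted reaction has $w_R|_{I_0}=0$, by closedness of $\mathcal{B}$.
\item It is first order, since $\alpha\cdot\nu_{0j}=0$ for every zero-order reaction. So its reactant $S_i$ has $i\in I_+$, i.e. $z_i^*>0$.
\item You show $X^i_t=0$ eventually a.s. L\'evy's argument by itself only gives $\int_0^\infty X^i_s\,ds<\infty$. Upgrade it by noting that all reactions with reactant $S_i$ then fire finitely often, after which $X^i$ is non-decreasing and must therefore be $0$.
\item Hence $\pi_{n_0}(n_i>0)=0$. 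The $1$-Lipschitz test function $n\mapsto n_i$ then gives $\liminf_t W_1(p_t,\pi_{n_0})\ge\lim_t z_i(t)=z_i^*>0$.
\end{itemize}

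The same coordinate also handles non-exponential convergence directly. Combine your polynomial second-moment bound with Cauchy--Schwarz applied to $z_i(t)=\mathbb{E}[X^i_t\mathbf{1}_{\{X^i_t>0\}}]$. This yields $\|p_t-\pi_{n_0}\|_{\mathrm{TV}}\ge\mathbb{P}(X_t^i>0)\ge z_i(t)^2/\mathbb{E}[(X_t^i)^2]$, which decays only polynomially. This is a more elementary substitute for the paper's branching-type tail estimate and makes your uniform-integrability detour unnecessary.

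With these repairs, the only essential missing piece is the TV convergence on infinite classes described above.
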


\begin{proof}
The proof is given in Section~\ref{sec:critical_irregular_proof} using martingale and extinction-time arguments.
\end{proof}

Theorems \ref{thm:critical_regular} and \ref{thm:critical_irregular} together provide a complete characterization of the long-time behavior of the stochastic system in the critical regime. Specifically, when conditions (C1) and (C2) hold, exponential convergence occurs if and only if the regularity condition in Definition \ref{regular} is satisfied. When the regularity condition is broken, the system still converges under the total variation distance, but the convergence is no longer exponential and fails to hold under the $L^1$-Wasserstein distance.

\subsection{Long-time behavior in the divergent regimes}
Proposition~\ref{prop:det_longtime} shows that the deterministic dynamics of a linear reaction network fails to converge to a finite equilibrium if and only if one of the following two conditions is satisfied:
\begin{enumerate}
\item[(D1)] The system is in the unstable regime with $\lambda_{\max}>0$ (exponential divergence);
\item[(D2)] The system is in the critical regime with $\lambda_{\max}=0$, and at least one of the conditions (C1) and (C2) is violated (polynomial divergence).
\end{enumerate}
In either case, for any initial condition $z(0)>0$, the deterministic trajectory diverges to infinity, i.e.
\begin{equation*}
\|z(t)\|_1\to\infty,\quad\text{as }t\to\infty.
\end{equation*}

We now investigate the long-time behavior of the stochastic dynamics. In the divergence regime (D1) or (D2), the total mass $\|X_t\|_1$ does not necessarily diverge to infinity: instead, the process may hit the boundary of the state space, where some species become extinct and can never recover. The following two theorems treat the polynomially and exponentially divergent regimes separately. They show that, in both cases, no closed irreducible positive recurrent class can contain an interior state.

\begin{theorem}\label{thm:divergent_D1}
Assume that condition (D1) is satisfied. Then every interior state $n_0>0$ is transient. Moreover,
\begin{equation*}
\mathbb{P}_{n_0}\bigl(\|X_t\|_1\to\infty\text{ or }X_t\text{ is absorbed on }\partial E\bigr) = 1 .
\end{equation*}
In particular, every stationary distribution of the system, if it exists, is supported on the boundary of the state space, i.e.
\begin{equation*}
\partial E = \{n\in E:\;n_i= 0\text{ for at least one }i\}.
\end{equation*}
Equivalently, every closed irreducible positive recurrent class of the system, if it exists, is contained in $\partial E$.
\end{theorem}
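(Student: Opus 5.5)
The key tool will be a Perron--Frobenius Lyapunov function. Since $A$ is Metzler with $\lambda_{\max}>0$, there is a non-negative left eigenvector $v\ge0$, $v\neq0$, with $v^{\mathsf T}A=\lambda_{\max}v^{\mathsf T}$. Using \eqref{eq:mean} and the fact that the generator $\mathcal{L}$ acts on linear functions through the drift $An+b$, we get
\begin{equation*}
\mathcal{L}(v\cdot n)=v\cdot(An+b)=\lambda_{\max}\,v\cdot n+v\cdot b,\qquad v\cdot b\ge0 .
\end{equation*}
Set $Y_t=e^{-\lambda_{\max}t}v\cdot X_t$. Then $Y_t$ is a non-negative submartingale; this uses non-explosivity (Theorem~\ref{thm:non-explosive}) and affine growth of the rates to justify integrability and localization. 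More usefully, for any fixed state $n_0$ with $v\cdot n_0>0$, the process $M_t=e^{-\lambda_{\max}t}v\cdot X_t-\int_0^t e^{-\lambda_{\max}s}v\cdot b\,\dnum s$ is a martingale.

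The transience claim follows by contradiction. Suppose an interior state $n_0>0$ is recurrent. Its communicating class $\mathcal{K}$ is then closed, and because $n_0>0$ we have $v\cdot n_0>0$.

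First consider the case where $\mathcal{K}$ is positive recurrent with stationary law $\pi$. I would show $\sum_n\pi(n)\,v\cdot n<\infty$, for example by truncation and a Fatou argument. Stationarity then forces $\sum_n\pi(n)\mathcal{L}(v\cdot n)\le 0$. But $\mathcal{L}(v\cdot n)\ge\lambda_{\max}v\cdot n$, so $\pi$ is concentrated on $\{v\cdot n=0\}$. That set does not contain $n_0$, a contradiction. If finiteness of the first moment is not available, I would run the argument with $\min(v\cdot n,K)$ and its generator, or use the bounded concave function $f(n)=1-(1+v\cdot n)^{-\theta}$.

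Null recurrence is handled by a return argument. Started from $n_0$, the process must return to $n_0$ infinitely often, at arbitrarily large times $t$. On those returns $Y_t=e^{-\lambda_{\max}t}v\cdot n_0\to0$. On the other hand, the positive martingale $M_t+\int_0^\infty e^{-\lambda_{\max}s}v\cdot b\,\dnum s$ converges almost surely to a limit $Y_\infty$. I would then show $\mathbb{P}_{n_0}(Y_\infty>0)>0$ by an $L^2$ bound. The quadratic variation of $M$ has rate $e^{-2\lambda_{\max}t}\sum_R f_R(X_t)(v\cdot w_R)^2$, which is affine in $X_t$, so $\mathbb{E}[\langle M\rangle_\infty]\lesssim\int e^{-2\lambda_{\max}t}\,\mathbb{E}[v\cdot X_t]\,\dnum t$. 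This is finite when $\mathbb{E}\|X_t\|_1$ grows at rate $e^{(\lambda_{\max}+\delta)t}$ with $\delta<\lambda_{\max}$, which holds for all $\delta>0$ apart from Jordan issues; those I would absorb by working with the principal component. Then $\mathbb{E}Y_\infty=\lim\mathbb{E}Y_t\ge v\cdot n_0>0$ by uniform integrability, so $Y_\infty>0$ with positive probability. On this event $X_t$ cannot return to $n_0$, which contradicts recurrence.

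For the dichotomy, I would show that from every state the chain either has $\|X_t\|_1\to\infty$ or is absorbed in $\partial E$, working on the event $\{Y_\infty>0\}$ and its complement. On $\{Y_\infty>0\}$ we have $v\cdot X_t\to\infty$. On $\{Y_\infty=0\}$, the set of states visited infinitely often must consist of recurrent states, and these are boundary states. Here I would use that a chain spending unbounded time in a finite set visits some state infinitely often, so non-escape to infinity forces visits to a recurrent class, which by the previous step lies in $\partial E$; since recurrent classes are closed, this gives absorption. The support statement then follows at once: a stationary distribution charges only positive recurrent states, and these lie in $\partial E$.

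The main obstacle is the positivity $\mathbb{P}_{n_0}(Y_\infty>0)>0$, that is, the $L^2$ control of the martingale when $v$ is supported only on part of the species and $A$ is reducible. I would first try restricting to the irreducible block of $A$ carrying $\lambda_{\max}$ that is accessible from $n_0$.
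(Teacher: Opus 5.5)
Your argument is correct, and the step you single out as the main obstacle is already settled by your own estimate. Because $\lambda_{\max}$ is the spectral abscissa of $A$, one has $\|e^{tA}\|\le C(1+t)^{N-1}e^{\lambda_{\max}t}$ regardless of Jordan blocks or reducibility. The mean equation then gives $\mathbb{E}\|X_t\|_1\le C(1+t)^{N}e^{\lambda_{\max}t}$, and
\[
\mathbb{E}\langle M\rangle_\infty\le C\int_0^\infty e^{-2\lambda_{\max}t}\bigl(1+\mathbb{E}\|X_t\|_1\bigr)\,\mathrm{d}t<\infty .
\]
So you do not need to pass to an irreducible block or a ``principal component.''

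The sharper bound by $\mathbb{E}[v\cdot X_t]$ that you wrote is also valid, but it needs one observation. If $v_i=0$, then $0=\lambda_{\max}v_i=\sum_j k_{ij}\,v\cdot w_{ij}$, and every summand is non-negative. Hence reactions whose reactant lies outside $\operatorname{supp}v$ do not change $v\cdot X$.

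Your separate positive-recurrent case is superfluous. The return argument uses only that $n_0$ is visited at arbitrarily large times, so it rules out recurrence of every kind. The dichotomy and the support statements then follow from general countable-state theory, as you describe.

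The paper defers this proof to its supplement and describes it as a Lyapunov-function argument, so your route is genuinely different. A typical Lyapunov transience argument shows that a bounded decreasing function of $v\cdot n$ is superharmonic where $v\cdot n$ is large. That needs only one-step drift and variance bounds, with no time-dependent moment control. However, it must be combined with a reachability argument to connect high levels of $v\cdot n$ with the state $n_0$.

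Your Kesten--Stigum-type argument instead uses the exact mean dynamics to obtain $L^2$-boundedness, and it yields more. You get $e^{-\lambda_{\max}t}v\cdot X_t\to Y_\infty$ almost surely with $\mathbb{E}Y_\infty=v\cdot n_0+v\cdot b/\lambda_{\max}$. So escape happens with positive probability at exactly the deterministic exponential rate. Transience also holds for every state, interior or not, with $v\cdot n_0+v\cdot b>0$.
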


\begin{proof}
The proof of the theorem is given in Supplementary Section~S4.
\end{proof}

\begin{theorem}\label{thm:divergent_D2}
Assume that condition (D2) is satisfied. Then every interior state $n_0>0$ is either null recurrent or transient. Moreover, every stationary distribution of the system, if it exists, is supported on the boundary $\partial E$ of the state space. Equivalently, every closed irreducible positive recurrent class of the system, if it exists, is contained in $\partial E$.
\end{theorem}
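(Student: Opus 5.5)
The plan is to argue by contradiction. Suppose some interior state $n_0>0$ is positive recurrent. Let $\mathcal{D}$ be its closed irreducible class and $\pi$ the unique stationary distribution on $\mathcal{D}$, so that $\pi(n_0)>0$. The contradiction will come from linear test functions built from left eigenvectors of $A$, which grow polynomially in expectation under (D2).

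\textbf{Step 1: $\pi$ has a finite first moment.} Since $q(n)$ in \eqref{eq:q(n)} is affine, I would show $\sum_n\pi(n)\|n\|_1<\infty$. This does not hold automatically for every positive recurrent chain, so I would take the weaker route of applying Cesàro averages directly. Start the chain from $n_0$. By \eqref{eq:mean}, $m(t)=\mathbb{E}_{n_0}[X_t]$ solves $\dot m=Am+b$ with $m(0)=n_0>0$. Proposition~\ref{prop:det_longtime}(ii) then gives $\|m(t)\|_1\to\infty$ polynomially.

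\textbf{Step 2: a nonnegative functional whose mean grows.} Under (D2), by Perron--Frobenius for the Metzler matrix $A$, there is a nonnegative left vector to work with. I split into two cases.

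\emph{Case 1: (C2) fails and (C1) holds.} There is $\alpha\ge0$ in $\ker A^{\mathsf T}$ with $\alpha\cdot b>0$. A nonnegative basis exists by the Metzler structure, as in Proposition~\ref{prop:critical_affine}, and some basis element must have $\alpha\cdot b\neq0$. Since $b\ge0$, that element has $\alpha\cdot b>0$. Then $\frac{d}{dt}\alpha\cdot m(t)=\alpha\cdot b>0$, so $\alpha\cdot m(t)$ grows linearly.

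\emph{Case 2: (C1) fails.} There is a Jordan chain: $u^{\mathsf T}A=0$ and $v^{\mathsf T}A=u^{\mathsf T}$. I expect $u$ can be chosen nonnegative, with $u\cdot m(t)$ constant and $v\cdot m(t)\sim t\,(u\cdot n_0)$. The point to check is that $u\cdot n_0>0$, which uses $n_0>0$ and $u\ge0$, $u\ne0$. Hence $|v\cdot m(t)|\to\infty$ linearly.

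\textbf{Step 3: contradiction via the ergodic theorem.} Positive recurrence of $n_0$ gives $p_t(n)\to\pi(n)$ pointwise on $\mathcal{D}$. By the ratio ergodic theorem, $\frac1T\int_0^T f(X_s)\,ds\to\sum\pi f$ almost surely for $f\ge0$, and the limit may be $+\infty$.

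In Case 1, I would use the martingale $M_t=\alpha\cdot X_t-\alpha\cdot n_0-(\alpha\cdot b)t$. It is a martingale because $\alpha^{\mathsf T}A=0$ makes the drift of $\alpha\cdot X$ exactly $\alpha\cdot b$. Its localization is justified by Theorem~\ref{thm:non-explosive} and the affine rates. Positive recurrence implies that $\alpha\cdot X_{\tau_k}=\alpha\cdot n_0$ at the successive return times $\tau_k$ to $n_0$. Optional stopping at $\tau_1\wedge t$, with $\mathbb{E}\tau_1<\infty$, gives $(\alpha\cdot b)\,\mathbb{E}_{n_0}[\tau_1]=0$, which contradicts $\alpha\cdot b>0$. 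The justification is uniform integrability. By Wald-type bounds, $\mathbb{E}\sup_{s\le\tau_1}|M_s|$ is controlled by the jump counts of the affine-rate process up to $\tau_1$.

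In Case 2, I would apply the same argument to the martingale $v\cdot X_t-\int_0^t(u\cdot X_s+v\cdot b)\,ds$. The drift of $u\cdot X$ is $u\cdot b$. If $u\cdot b>0$, the previous argument already applies to $u$. Otherwise $u\cdot X$ is a martingale, and optional stopping at $\tau_1$ yields $\mathbb{E}_{n_0}\int_0^{\tau_1}(u\cdot X_s+v\cdot b)\,ds=0$. Iterating the Jordan chain (choosing its length maximal so that $v\cdot b$ terms are handled) should produce a nonzero strictly signed integral, since $u\cdot X_s\ge0$ and $u\cdot X_0>0$.

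\textbf{Main obstacle.} I expect the hardest part to be twofold: justifying optional stopping without a priori moment bounds on the excursion, and obtaining the sign and positivity in the Jordan-chain case. For the first, I would try truncating at $\tau_1\wedge t$ and use Gronwall-type bounds on $\mathbb{E}\|X_{s}\|_1$, which grow only exponentially, together with $\mathbb{E}e^{\theta\tau_1}$ finiteness. That finiteness fails in general, so a fallback is needed. The fallback is to show null recurrence or transience via the Cesàro mean, namely $\frac1T\int_0^T\mathbb{P}_{n_0}(X_s=n_0)\,ds\to0$, deduced from the polynomial escape of $u\cdot X$ or $\alpha\cdot X$.

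The support statement then follows directly. Any stationary distribution decomposes into ergodic components on positive recurrent classes, and none of these contains an interior state.
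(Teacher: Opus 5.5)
\textbf{There is a genuine gap, and it sits at the step that carries the whole contradiction.}

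\textbf{Optional stopping is circular.} In Step 3 you need $\mathbb{E}_{n_0}[M_{\tau_1}]=0$ at the return time $\tau_1$, i.e.\ uniform integrability of $\alpha\cdot X_{\tau_1\wedge t}$. Your Wald-type bound reduces this to the expected number of jumps in one excursion. Writing $\pi(h)=\sum_n\pi(n)h(n)$, the cycle formula gives
\[
\mathbb{E}_{n_0}\int_0^{\tau_1}q(X_s)\,\mathrm{d}s=\mathbb{E}_{n_0}[\tau_1]\,\pi(q).
\]
Since $q$ is affine, this is finite only if $\pi$ has a finite first moment in the reactant coordinates. That is exactly the claim in the heading of Step 1, which you never prove. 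It also does not follow from positive recurrence: $\pi(q)<\infty$ is positive recurrence of the embedded jump chain, which positive recurrence of the continuous-time chain does not imply. So the contradiction is circular in Case 1, and equally for the identity $\mathbb{E}_{n_0}\int_0^{\tau_1}(u\cdot X_s+v\cdot b)\,\mathrm{d}s=0$ in Case 2.

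\textbf{The fallback does not help.} Growth of $\mathbb{E}_{n_0}[\alpha\cdot X_t]$ is fully compatible with positive recurrence when $\pi(\alpha\cdot n)=\infty$. Indeed, Fatou gives $\liminf_t\mathbb{E}_{n_0}[\alpha\cdot X_t]\ge\pi(\alpha\cdot n)$, so mean growth says nothing about $\mathbb{P}_{n_0}(X_s=n_0)$.

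\textbf{A way to exclude heavy-tailed stationary laws.} What is missing is an argument that rules out a heavy-tailed $\pi$, and bounded test functions supply one. Because $\alpha\ge0$ and $\alpha^{\mathsf T}A=0$, every reaction whose reactant lies outside $\operatorname{supp}\alpha$ has $\alpha\cdot\nu_{ij}=0$. Hence every reaction that changes $f(n)=\alpha\cdot n$ has rate at most $C(1+f(n))$. Set $\Delta=\max_R|\alpha\cdot w_R|$ and $g_K=\min(f,K)$. Then $Qg_K=\alpha\cdot b$ on $\{f\le K-\Delta\}$, $Qg_K=0$ on $\{f\ge K+\Delta\}$, and $|Qg_K|\le C(1+K)$ in between. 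Since $g_K$ and $Qg_K$ are bounded, Dynkin's formula under $\mathbb{P}_\pi$ gives $\pi(Qg_K)=0$, hence
\[
(\alpha\cdot b)\,\pi(f\le K-\Delta)\le C(1+K)\,\pi(K-\Delta<f<K+\Delta).
\]
For $K\in2\Delta\mathbb{N}$ these windows are disjoint, so their masses sum to at most one. This forces $\liminf_K(1+K)\,\pi(K-\Delta<f<K+\Delta)=0$, which contradicts $\alpha\cdot b>0$.

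\textbf{Case 2 needs two further ingredients.}

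First, the sign. Your contradiction needs $v\cdot b\ge0$, and $v\cdot n$ bounded below. But $v$ is determined only modulo $\ker A^{\mathsf T}$, and choosing the chain of maximal length does not settle the sign. Take instead a nonnegative chain $u=A^{\mathsf T}v\ne0$, $A^{\mathsf T}u=0$, with $u,v\ge0$. Such a chain exists by Rothblum's results on semipositive bases of the algebraic eigenspace, applied to $A^{\mathsf T}+sI\ge0$.

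Second, the rate bound needed to run the truncation argument for $f=v\cdot n$. A species $i$ with $v_i=0<u_i$ has reactions that increase $f$ at rate $k_{ij}n_i$, so you need a bound on $n_i$.
\begin{itemize}
\item If $u\cdot b>0$, apply the Case 1 argument to $u$.
\item Otherwise $u\cdot X_t$ is a nonnegative supermartingale and so converges almost surely. The chain also visits every state of the recurrent class $\mathcal{D}$ infinitely often. Hence $u\cdot n\equiv u\cdot n_0>0$ on $\mathcal{D}$, and this is where $n_0>0$ genuinely enters.
\end{itemize}
On $\mathcal{D}$, $v\cdot n$ then has constant drift $u\cdot n_0+v\cdot b>0$ and changing rates of order $O(1+v\cdot n)$. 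The truncation argument above then applies verbatim.
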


\begin{proof}
The proof of the theorem is given in Supplementary Section~S4.
\end{proof}

According to the above two theorems, the divergent regimes (D1) and (D2) may lead to different long-time behaviors. In the divergent regime (D1), every interior state is transient: starting from an interior state, the system either diverges to infinity or, if the boundary is accessible, is eventually absorbed on the boundary. However, in the divergent regime (D2), every interior state is either transient or null recurrent. If an interior state is null recurrent, then starting from this state, the system remains within a null recurrent class, without converging to a stationary distribution and without escaping to infinity.

In particular, we emphasize that, in both divergent regimes (D1) and (D2), although the deterministic trajectories grow unboundedly, the stochastic dynamics may instead be absorbed on the boundary of the state space when the initial state is transient. Therefore, deterministic and stochastic dynamics may exhibit fundamentally different long-time behaviors. We now illustrate this phenomenon with two examples.

\begin{example}\label{ex:lambda_pos}
Consider the reaction network
\begin{equation*}
\varnothing \xrightarrow{k_0} S_1,\quad
S_1 \xrightarrow{k_1} S_2,\quad
S_2 \xrightarrow{k_2} S_1+2S_2,\quad
S_2 \xrightarrow{k_3} \varnothing,
\end{equation*}
where $k_0\ge0$, $k_1>0$, and $k_2>k_3>0$. For this network, the first-order influx matrix $A$ and the zero-order influx vector $b$ are given by
\begin{equation*}
A = \begin{pmatrix}
-k_1 & k_2\\
k_1 & k_2-k_3
\end{pmatrix}, \quad
b = \begin{pmatrix}
k_0 \\ 0
\end{pmatrix}.
\end{equation*}
It is straightforward to verify that $\det(A)=k_1(k_3-2k_2)<0$. Therefore, $A$ has one positive and one negative eigenvalue, and hence $\lambda_{\max}>0$. Thus, the network satisfies the divergence condition (D1). According to Proposition~\ref{prop:det_longtime}, every deterministic trajectory with a positive initial condition diverges to infinity.

We first consider the case of $k_0>0$, i.e. the first reaction is present. In this case, although the boundary $\partial E$ of the state space is accessible, it contains no closed communicating class. Indeed, from every boundary state, there exists a finite sequence of reactions that leads to an interior state: the synthesis reaction $\varnothing\rightarrow S_1$ produces $S_1$, the conversion reaction $S_1\rightarrow S_2$ produces $S_2$, and the autocatalytic reaction $S_2\to S_1+2S_2$ can subsequently produce both species simultaneously. Therefore, starting from any interior initial state $n_0>0$, the system cannot be absorbed on the boundary. According to Theorem~\ref{thm:divergent_D1}, the stochastic trajectories must also diverge to infinity, i.e.
\begin{equation*}
\lim_{t\to\infty}\|X_t\|_1 = \infty,
\end{equation*}
almost surely. In this case, the deterministic and stochastic systems exhibit the same long-time behavior.

We next consider the case of $k_0=0$, i.e. the first reaction is absent. In this case, $(0,0)$ is an absorbing state of the stochastic system and can be reached from every nonzero state through a finite sequence of reactions with strictly positive transition rates. Moreover, it is straightforward to verify that $(0,0)$ is the only closed class contained in the boundary $\partial E$ of the state space. Hence, Theorem~\ref{thm:divergent_D1} implies that the process either reaches the absorbing state $(0,0)$ in finite time or diverges to infinity, i.e. $\|X_t\|_1\to\infty$ as $t\to\infty$. In this case, the deterministic and stochastic systems may exhibit different long-time behaviors.
\end{example}

\begin{example}\label{ex:jordan}
Consider the reaction network
\begin{equation*}
\varnothing \xrightarrow{k_0} S_1,\quad
S_1 \xrightarrow{k_1} 2S_1,\quad
S_1 \xrightarrow{k_1} \varnothing,
\end{equation*}
with $k_0,k_1>0$. Here, the rate constants of the last two first-order reactions are assumed to be identical. For this network, the first-order influx matrix is $A=[0]$ (a $1\times1$ zero matrix), and the zero-order influx vector is $b=k_0$. It is clear that the zero eigenvalue of $A$ is simple. However, $b$ is not orthogonal to the left nullspace of $A$, which coincides with the entire real axis $\mathbb{R}$. Therefore, the network satisfies the divergence condition (D2).

The stochastic process is a birth-death chain on the set of nonnegative integers, with birth rates $\lambda_n=k_0+k_1n$ and death rates $\mu_n=k_1n$. Classical results for birth-death processes \cite{karlin1975first} state that such a chain is
\begin{itemize}
\item[(i)] recurrent if and only if
\begin{equation}\label{eq:recurrence_crit}
\sum_{n=1}^{\infty}\frac{\mu_1\mu_2\cdots\mu_n}{\lambda_1\lambda_2\cdots\lambda_n} = \infty;
\end{equation}
\item[(ii)] positive recurrent if and only if
\begin{equation}\label{eq:positive_recurrence_crit}
\sum_{n=0}^{\infty}\frac{\lambda_0\lambda_1\cdots\lambda_{n-1}}{\mu_1\mu_2\cdots\mu_n} < \infty.
\end{equation}
\end{itemize}
Applying these criteria to the current system, we obtain
\begin{equation*}
\begin{gathered}
\frac{\mu_1\cdots\mu_n}{\lambda_1\cdots\lambda_n}
= \prod_{i=1}^{n}\frac{k_1i}{k_0+k_1i} \sim n^{-k_0/k_1},\\
\frac{\lambda_0\cdots\lambda_{n-1}}{\mu_1\cdots\mu_n}
= \frac{k_0}{k_1n}\prod_{i=1}^{n-1}\frac{k_0+k_1i}{k_1i}
\sim n^{k_0/k_1-1}.
\end{gathered}
\end{equation*}
Consequently,
\begin{itemize}
\item[(i)] when $k_0\le k_1$, both series \eqref{eq:recurrence_crit} and \eqref{eq:positive_recurrence_crit} diverge. In this case, the system is null recurrent;
\item[(ii)] when $k_0>k_1$, the series \eqref{eq:recurrence_crit} converges, while the series \eqref{eq:positive_recurrence_crit} diverges. In this case, the system is transient, and hence $\|X_t\|_1\to\infty$ as $t\to\infty$.
\end{itemize}
In neither case does a stationary distribution exist. This example illustrates Theorem~\ref{thm:divergent_D2}: an interior state $n_0>0$ can be either null recurrent or transient.
\end{example}

\subsection{Summary of main results}
In summary, we have provided a spectral and structural characterization of the long-time behavior of reducible stochastic linear reaction networks based on (i) the spectrum of the first-order influx matrix $A$, (ii) the position of the zero-order influx vector $b$ relative to the left nullspace of $A$, and (iii) whether every left null vector of $A$ corresponds to a conservation law when the reaction network is restricted to the set of persistent species. In the stable regime, the deterministic and stochastic systems exhibit similar long-time behaviors: the former converges to a globally attracting equilibrium exponentially fast, while the latter converges to a unique stationary distribution supported on the absorbing set $\mathcal{B}$ exponentially fast. In contrast, in the critical and divergent regimes, the deterministic and stochastic systems may exhibit fundamentally different long-time behaviors. The main results are summarized in Table~\ref{tab:comparison}.

\begin{table}[htb!]
\footnotesize
\renewcommand{\arraystretch}{1.5}
\begin{tabular}{@{}p{0.23\textwidth}p{0.29\textwidth}p{0.31\textwidth}p{0.1\textwidth}@{}}
\toprule
Regime & Deterministic system & Stochastic system & Reference \\
\midrule
All cases
&
The solution exists globally, i.e. $z(t)$ remains finite for all $t\ge0$ and all initial conditions.
&
The system is non-explosive, i.e. $J_\infty=\infty$ almost surely for all initial states
&
Theorem~\ref{thm:non-explosive}
\\
\midrule
Stable case of $\lambda_{\max}<0$
&
Exponential convergence to a globally attracting equilibrium $z^*\geq0$, at rate
$-\lambda_{\max}$
&
Exponential convergence to a unique stationary distribution $\pi$ supported on the absorbing set $\mathcal{B}$, at rate
$-\lambda_{\max}$
&
Theorem~\ref{thm:stable_case}
\\
\addlinespace

Critical case of $\lambda_{\max}=0$, conditions (C1) and (C2) hold; regularity condition is satisfied
&
Exponential convergence to an equilibrium $z^*\geq0$ that depends on the initial condition, at rate $\lambda_{\mathrm{gap}}$
&
Exponential convergence to a stationary distribution $\pi$ that depends on the initial state, at rate $\lambda_{\mathrm{gap}}$
&
Theorem~\ref{thm:critical_regular}
\\
\addlinespace

Critical case of $\lambda_{\max}=0$, conditions (C1) and (C2) hold; regularity condition is violated
&
Exponential convergence to an equilibrium $z^*\geq0$ that depends on the initial condition, at rate $\lambda_{\mathrm{gap}}$
&
Non-exponential convergence to a stationary distribution $\pi$ that depends on the initial state under the total variation distance; convergence fails under the $L^1$-Wasserstein distance
&
Theorem~\ref{thm:critical_irregular}
\\
\addlinespace

Critical case of $\lambda_{\max}=0$, condition (C1) or (C2) does not hold
&
Polynomial divergence to infinity
&
Every interior state is transient or null recurrent; any stationary distribution, if it exists, is supported on the boundary
&
Theorem~\ref{thm:divergent_D2}
\\
\addlinespace

Unstable case of $\lambda_{\max}>0$
&
Exponential divergence to infinity
&
Every interior state is transient; any stationary distribution, if it exists, is supported on the boundary
&
Theorem~\ref{thm:divergent_D1}
\\
\bottomrule
\end{tabular}
\caption{\textbf{Comparison of the long-time behaviors of deterministic and stochastic dynamics for linear reaction networks.}}\label{tab:comparison}
\end{table}

\section{Proof of Theorem~\ref{thm:stable_case}}\label{sec:stable_proof}
Throughout this section, we assume $\lambda_{\max}<0$ and write
\begin{equation*}
\rho:=-\lambda_{\max}>0.
\end{equation*}
By Theorem~\ref{thm:hitting_exp}, the absorbing set $\mathcal B$ defined in \eqref{eq:def_R} is a closed irreducible positive recurrent class. Hence the process admits a unique stationary distribution $\pi=(\pi(n))_{n\in E}$ supported on $\mathcal B$. For any $\varepsilon\in(0,\rho)$, the matrix $A+(\rho-\varepsilon)I$ is a stable Metzler matrix. A standard characterization of stable Metzler matrices \cite{berman1994nonnegative} guarantees that there exists a strictly positive column vector $v>0$ such that
\begin{equation}\label{eq:weight_ineq}
v^{\mathsf T}A
\le
-(\rho-\varepsilon)v^{\mathsf T}.
\end{equation}
Set $v_{\min}:=\min_i v_i$ and $v_{\max}:=\max_i v_i$.
For a function $h:E\to\mathbb R$, we use the same symbol $Q$ for the generator operator,
\begin{equation*}
(Qh)(n)
:=
\sum_{R\in\mathcal R}
f_R(n)\bigl(h(n+w_R)-h(n)\bigr),
\qquad n\in E,
\end{equation*}
where $w_R$ denotes the reaction vector of $R$.

We first establish the moment bounds needed below.

\begin{proposition}\label{prop:lyapunov_moments}
There exists a constant $\theta>0$ such that
\begin{equation}\label{temp1}
\mathbb E_{\pi}
\bigl[e^{\theta\|X_t\|_1}\bigr]
=
\sum_{n\in E}e^{\theta\|n\|_1}\pi(n)
<
\infty,
\end{equation}
and, for every initial state $n_0\in E$,
\begin{equation}\label{temp2}
\sup_{t\ge0}
\mathbb E_{n_0}
\bigl[e^{\theta\|X_t\|_1}\bigr]
<
\infty.
\end{equation}
In particular,
\begin{equation}\label{temp3}
\mathbb E_{\pi}\bigl[\|X_t\|_1^p\bigr]<\infty,
\qquad
\sup_{t\ge0}\mathbb E_{n_0}\bigl[\|X_t\|_1^p\bigr]<\infty
\end{equation}
for every $n_0\in E$ and $p\ge1$.
\end{proposition}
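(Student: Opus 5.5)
We will use the exponential Lyapunov function $V(n):=e^{\theta v\cdot n}$ built from the positive weight vector $v>0$ of \eqref{eq:weight_ineq}, with $\theta>0$ small to be chosen. Since $v_{\min}\|n\|_1\le v\cdot n\le v_{\max}\|n\|_1$, bounds on $V$ give bounds on $e^{\theta' \|n\|_1}$ with $\theta'=\theta v_{\min}$. The plan is to prove a Foster--Lyapunov drift inequality
\begin{equation*}
(QV)(n)\le -c\,V(n)+K,\qquad n\in E,
\end{equation*}
for some constants $c,K>0$. From it we derive both \eqref{temp2} and \eqref{temp1}; the polynomial moments \eqref{temp3} then follow from $\|n\|_1^p\le C_p e^{\theta'\|n\|_1}$.

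\textbf{Drift computation.} For each reaction $R$, write $V(n+w_R)-V(n)=V(n)\bigl(e^{\theta v\cdot w_R}-1\bigr)$. The reaction vectors $w_R$ range over a finite set, so $|v\cdot w_R|\le L$ for some $L$. Hence $e^{\theta v\cdot w_R}-1\le \theta v\cdot w_R+C\theta^2$ uniformly in $R$ for $\theta\le1$, with $C=L^2e^{L}/2$. Summing against the affine propensities gives
\begin{equation*}
\frac{(QV)(n)}{V(n)}\le \theta\Bigl(\sum_R f_R(n)\,v\cdot w_R\Bigr)+C\theta^2 q(n).
\end{equation*}
By \eqref{eq:jacobian}, the first sum is $\theta\bigl(v^{\mathsf T}An+v\cdot b\bigr)$. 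By \eqref{eq:weight_ineq} this is at most $\theta\bigl(-(\rho-\varepsilon)v\cdot n+v\cdot b\bigr)$. Using \eqref{eq:q(n)}, $q(n)\le \kappa_0+\kappa_1\|n\|_1\le \kappa_0+(\kappa_1/v_{\min})v\cdot n$. Choosing $\theta$ with $C\theta\kappa_1/v_{\min}\le(\rho-\varepsilon)/2$ yields
\begin{equation*}
\frac{(QV)(n)}{V(n)}\le \theta\Bigl(-\tfrac{\rho-\varepsilon}{2}v\cdot n+K_0\Bigr).
\end{equation*}
This is $\le -\theta$ outside a finite set $F=\{v\cdot n\le M\}$, and $QV$ is bounded on $F$. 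This gives the drift inequality with $c=\theta$.

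\textbf{From drift to moments.} Non-explosivity (Theorem~\ref{thm:non-explosive}) justifies Dynkin's formula with localization at the exit times of the balls $\{\|n\|_1\le k\}$. Fatou's lemma then gives
\begin{equation*}
\mathbb{E}_{n_0}V(X_t)\le e^{-ct}V(n_0)+K/c,
\end{equation*}
which yields \eqref{temp2} uniformly in $t$. For \eqref{temp1}, we use the standard consequence of the drift inequality for a positive recurrent chain: $\pi(V)\le K/c$. Concretely, apply the previous bound with $V\wedge m$ averaged against $\pi$. Alternatively, use the ergodic theorem on $\mathcal B$: $\frac1T\int_0^T \mathbb{E}_{n_0}[V(X_s)\wedge m]\,ds\to\pi(V\wedge m)\le \sup_t\mathbb{E}_{n_0}V(X_t)$, and then let $m\to\infty$ by monotone convergence. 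Since $\pi$ is stationary, $\mathbb E_\pi[e^{\theta'\|X_t\|_1}]=\sum_n e^{\theta'\|n\|_1}\pi(n)$.

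\textbf{Main obstacle.} The delicate step is the localization and justification of Dynkin's formula for the unbounded function $V$, together with passing the bound to $\pi$. The drift estimate itself is routine once the weight $v$ is used. We would handle this with the stopping times $\tau_k$ and Gronwall's inequality applied to $e^{ct}\mathbb{E}V(X_{t\wedge\tau_k})$, using $\tau_k\to\infty$ a.s. by non-explosivity.
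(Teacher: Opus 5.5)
Your proposal is correct and follows essentially the same route as the paper. Both use the exponential Lyapunov function $e^{\theta v\cdot n}$ built from the weight in \eqref{eq:weight_ineq}, a second-order bound on $e^{x}-1$ combined with the affine bound on $q(n)$ to obtain $QV\le -cV+K$, and a localized Dynkin formula plus Fatou justified by non-explosivity (Theorem~\ref{thm:non-explosive}); your truncation argument for $\pi(V)\le K/c$ just makes explicit what the paper cites as standard Foster--Lyapunov theory. One small technical point: localize by applying the stopped Dynkin formula to the space-time function $e^{cs}V(X_s)$ at $t\wedge\tau_k$, as the paper does, because Gronwall applied directly to $e^{ct}\mathbb E V(X_{t\wedge\tau_k})$ picks up a boundary term from the frozen values after $\tau_k$ and does not close cleanly.
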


\begin{proof}
Fix $\varepsilon$ and $v$ as above and define $V(n):=v^{\mathsf T}n$. By \eqref{eq:weight_ineq},
\begin{equation*}
(QV)(n)
=
v^{\mathsf T}(An+b)
\le
-(\rho-\varepsilon)V(n)+\xi,
\qquad
\xi:=v^{\mathsf T}b.
\end{equation*}
Let $q(n):=\sum_{R\in\mathcal R}f_R(n)$ denote the total jump rate. Since the network is linear and $v>0$, there exists $C_0<\infty$ such that
\begin{equation*}
q(n)\le C_0\bigl(1+V(n)\bigr).
\end{equation*}
Moreover, because the reaction set is finite,
\[
\Delta_v
:=
\max_{R\in\mathcal R}|v^{\mathsf T}w_R|
<
\infty.
\]
Choose $\theta_0>0$ sufficiently small that
\begin{equation*}
\theta_0\Delta_v\le1,
\qquad
\gamma
:=
(\rho-\varepsilon)-2\theta_0\Delta_v^2C_0
>
0.
\end{equation*}
Set $W(n):=e^{\theta_0V(n)}$. Using $e^x-1\le x+2x^2$ for $|x|\le1$, we obtain
\begin{equation*}
\begin{aligned}
(QW)(n)
&=
W(n)
\sum_{R\in\mathcal R}
f_R(n)
\left(
e^{\theta_0v^{\mathsf T}w_R}-1
\right)
\\
&\le
\theta_0W(n)
\left[
QV(n)+2\theta_0\Delta_v^2q(n)
\right]
\\
&\le
\theta_0W(n)
\left[
-\gamma V(n)
+\xi
+2\theta_0\Delta_v^2C_0
\right].
\end{aligned}
\end{equation*}
The expression in brackets tends to $-\infty$ as $V(n)\to\infty$. Hence there exist constants $\beta>0$ and $C<\infty$ such that
\begin{equation}\label{eq:stable_exp_drift}
(QW)(n)
\le
-\beta W(n)+C,
\qquad n\in E.
\end{equation}
For $m>\|n_0\|_1$, define
\begin{equation*}
\sigma_m:=\inf\{t\ge0:\|X_t\|_1\ge m\}.
\end{equation*}
Before $\sigma_m$, the process evolves in a finite subset of $E$. Applying the stopped Dynkin formula to the process $e^{\beta t}W(X_t)$ and using \eqref{eq:stable_exp_drift}, we obtain
\begin{equation*}
\begin{aligned}
\mathbb E_{n_0}
\left[
e^{\beta(t\wedge\sigma_m)}
W(X_{t\wedge\sigma_m})
\right]
&\le
W(n_0)
+
C\,
\mathbb E_{n_0}
\int_0^{t\wedge\sigma_m}
e^{\beta s}\,\mathrm ds
\\
&\le
W(n_0)
+
\frac{C}{\beta}
\left(
e^{\beta t}-1
\right).
\end{aligned}
\end{equation*}
By non-explosivity, $\sigma_m\uparrow\infty$ almost surely. Letting $m\to\infty$ and applying Fatou's lemma give
\begin{equation*}
\mathbb E_{n_0}W(X_t)
\le
e^{-\beta t}W(n_0)
+
\frac{C}{\beta}
\left(
1-e^{-\beta t}
\right)
\le
W(n_0)+\frac{C}{\beta}.
\end{equation*}
Therefore $\sup_{t\ge0}\mathbb E_{n_0}W(X_t)<\infty$.

Since $\mathcal B$ is an irreducible positive recurrent class, the drift condition \eqref{eq:stable_exp_drift} and standard Foster-Lyapunov theory \cite{meyn1993stability} further imply $\mathbb E_\pi W<\infty$. Finally, setting $\theta:=\theta_0v_{\min}$,
we have
\[
V(n)\ge v_{\min}\|n\|_1,
\qquad
e^{\theta\|n\|_1}\le W(n).
\]
Hence \eqref{temp1} and \eqref{temp2} follow immediately. This further implies \eqref{temp3}.
\end{proof}

We next construct a coupling for two copies of the process evolving in $\mathcal B$. For brevity, write\
\begin{equation*}
w_{0j}:=\nu_{0j}, \qquad w_{ij}:=\nu_{ij}-e_i
\end{equation*}
for the reaction vectors of $R_{0j}$ and $R_{ij}$, respectively. Let $\gamma$ be any probability distribution on $\mathcal B\times\mathcal B$, and let $(X_0,Y_0)$ have distribution $\gamma$. At a state $(x,y)\in\mathcal B\times\mathcal B$, every zero-order reaction $R_{0j}$ induces the simultaneous transition
\begin{equation*}
(x,y)
\longrightarrow
(x+w_{0j},y+w_{0j})
\qquad
\text{at rate }k_{0j}.
\end{equation*}
For every first-order reaction $R_{ij}$, we use the common-reaction coupling
\begin{equation*}
\begin{array}{lll}
(x,y)\longrightarrow(x+w_{ij},y+w_{ij})
&\text{at rate}&
k_{ij}\min\{x_i,y_i\},
\\[1mm]
(x,y)\longrightarrow(x+w_{ij},y)
&\text{at rate}&
k_{ij}(x_i-y_i)_+,
\\[1mm]
(x,y)\longrightarrow(x,y+w_{ij})
&\text{at rate}&
k_{ij}(y_i-x_i)_+,
\end{array}
\end{equation*}
where $a_+:=\max\{a,0\}$. The total rate of reaction $R_{ij}$ in the first marginal is
\[
k_{ij}\min\{x_i,y_i\}
+
k_{ij}(x_i-y_i)_+
=
k_{ij}x_i,
\]
and similarly the total rate in the second marginal is $k_{ij}y_i$. Thus both marginals have the transition rates of the original process. Since $\mathcal B$ is closed, all transitions remain in $\mathcal B\times\mathcal B$. Moreover, every jump of the coupled process is a jump of at least one marginal, so non-explosivity of the marginals implies non-explosivity of the coupled process.

Define $\Delta_t:=X_t-Y_t$. Simultaneous reactions leave $\Delta_t$ unchanged; only unmatched first-order reactions can change the difference.

\begin{lemma}\label{lem:coupling_drift}
Under the coupling constructed above,
\begin{equation}\label{eq:Delta_t}
\mathbb E\|\Delta_t\|_1
\le
\frac{v_{\max}}{v_{\min}}
\mathbb E\|\Delta_0\|_1
e^{-(\rho-\varepsilon)t},
\qquad t\ge0.
\end{equation}
\end{lemma}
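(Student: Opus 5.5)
The natural Lyapunov function for the difference process is the weighted $\ell^1$-norm $U(\Delta):=\sum_i v_i|\Delta_i|$, with $v>0$ the vector from \eqref{eq:weight_ineq}. It satisfies $v_{\min}\|\Delta\|_1\le U(\Delta)\le v_{\max}\|\Delta\|_1$. Once we show $\mathbb E\,U(\Delta_t)\le e^{-(\rho-\varepsilon)t}\,\mathbb E\,U(\Delta_0)$, the claimed bound \eqref{eq:Delta_t} follows by comparing norms at times $0$ and $t$. The plan has two steps: a pointwise drift inequality $\mathcal L U(x-y)\le-(\rho-\varepsilon)U(x-y)$ for the coupled generator $\mathcal L$, and then a stopped Dynkin/Gr\"onwall argument.

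\textbf{Drift computation.} Zero-order reactions and simultaneous first-order firings leave $\Delta$ unchanged, so only unmatched reactions contribute. Consider species $i$ and put $d=\Delta_i=x_i-y_i$. If $d>0$, reaction $R_{ij}$ fires in $X$ alone at rate $k_{ij}d$ and moves $\Delta$ by $w_{ij}$. If $d<0$, it fires in $Y$ alone at rate $k_{ij}|d|$ and moves $\Delta$ by $-w_{ij}$.

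In either case, write $s=\operatorname{sgn}(d)$. The jump changes $\Delta$ by $s\,w_{ij}$ and changes $U$ by $\sum_\ell v_\ell\bigl(|\Delta_\ell+s w^{ij}_\ell|-|\Delta_\ell|\bigr)$. We bound each term:
\begin{itemize}
\item For $\ell=i$, we have $w^{ij}_i=\nu^{ij}_i-1\ge-1$ and $|\Delta_i|\ge1$. The term $s w^{ij}_i$ either pushes $\Delta_i$ toward zero by exactly one unit or away from it. Hence $|\Delta_i+s w^{ij}_i|-|\Delta_i|=w^{ij}_i$ exactly.
\item For $\ell\ne i$, we have $w^{ij}_\ell\ge0$, and the triangle inequality gives at most $w^{ij}_\ell$.
\end{itemize}
So each unmatched firing increases $U$ by at most $v^{\mathsf T}w_{ij}$. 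Multiplying by the rate $k_{ij}|\Delta_i|$ and summing over $j$ and $i$ gives
\[
\mathcal L U(\Delta)\le\sum_i|\Delta_i|\sum_j k_{ij}\,v^{\mathsf T}w_{ij}=\sum_i|\Delta_i|\,(v^{\mathsf T}A)_i\le-(\rho-\varepsilon)U(\Delta),
\]
using \eqref{eq:jacobian} and \eqref{eq:weight_ineq}.

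\textbf{Integration step.} I expect the main technical obstacle to be justifying Dynkin's formula, since $U$ is unbounded and the rates are unbounded. I would stop the coupled process at $\sigma_m=\inf\{t:\|X_t\|_1+\|Y_t\|_1\ge m\}$. Before $\sigma_m$ the process lives in a finite set, so Dynkin's formula applied to $e^{(\rho-\varepsilon)t}U(\Delta_t)$ yields
\[
\mathbb E\bigl[e^{(\rho-\varepsilon)(t\wedge\sigma_m)}U(\Delta_{t\wedge\sigma_m})\bigr]\le\mathbb E\,U(\Delta_0).
\]
Non-explosivity of the coupled process, noted above, gives $\sigma_m\uparrow\infty$ almost surely. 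Letting $m\to\infty$ with Fatou's lemma then yields $\mathbb E\,U(\Delta_t)\le e^{-(\rho-\varepsilon)t}\,\mathbb E\,U(\Delta_0)$.

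If $\mathbb E\|\Delta_0\|_1=\infty$ the bound is trivial, so no integrability assumption on $\gamma$ is needed. Converting back to the $\ell^1$-norm via $v_{\min}$ and $v_{\max}$ completes the proof.
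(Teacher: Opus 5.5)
Your proposal is correct and follows essentially the same route as the paper. Both use the weighted function $\sum_i v_i|\Delta_i|$, the exact identity on the reactant coordinate (from $|\Delta_i|\ge 1$ and $\nu_i^{ij}-1\ge -1$) plus the triangle inequality on the other coordinates to obtain the drift bound $\sum_i|\Delta_i|(v^{\mathsf T}A)_i\le-(\rho-\varepsilon)\sum_i v_i|\Delta_i|$, and then a stopped Dynkin argument at $\sigma_m$ with non-explosivity and Fatou's lemma; the only cosmetic difference is that you treat $\Delta_i>0$ and $\Delta_i<0$ together via $s=\operatorname{sgn}(\Delta_i)$, whereas the paper handles them separately.
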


\begin{proof}
If $\mathbb E\|\Delta_0\|_1=\infty$, there is nothing to prove. Hence assume that $\mathbb E\|\Delta_0\|_1<\infty$. For $d\in\mathbb R^N$, define
\[
\Psi(d)
:=
\sum_{i=1}^Nv_i|d_i|,
\qquad
\Psi_t:=\Psi(\Delta_t),
\]
and set $\Phi(x,y):=\Psi(x-y)$. For any $a\in\mathbb R$ and $b\ge0$,
\begin{equation}\label{eq:abs_bound}
|a+b|-|a|\le b,
\qquad
|a-b|-|a|\le b.
\end{equation}
Fix $(x,y)\in\mathcal B\times\mathcal B$ and write $d:=x-y$. Consider an unmatched transition associated with $R_{ij}$. If $d_i>0$, the first coordinate jumps alone, so $d$ changes to $d+w_{ij}$. Since $d_i\ge1$, we have
\begin{equation*}
|d_i+\nu_i^{ij}-1|-|d_i|=\nu_i^{ij}-1.
\end{equation*}
Using \eqref{eq:abs_bound} for all remaining coordinates gives
\begin{equation*}
\begin{aligned}
\Psi(d+w_{ij})-\Psi(d)
&\le
v_i(\nu_i^{ij}-1)
+
\sum_{\ell\ne i}
v_\ell\nu_\ell^{ij}
=
v^{\mathsf T}w_{ij}.
\end{aligned}
\end{equation*}
Similarly, if $d_i<0$, only the second coordinate jumps and $d$ changes to $d-w_{ij}$. Since $d_i\le-1$, we have
\begin{equation*}
|d_i-\nu_i^{ij}+1|-|d_i|=\nu_i^{ij}-1,
\end{equation*}
and therefore
\begin{equation*}
\Psi(d-w_{ij})-\Psi(d)
\le
v^{\mathsf T}w_{ij}.
\end{equation*}

Let $\widetilde Q$ denote the generator of the coupled process. Since simultaneous transitions do not change $\Phi$, the preceding estimates imply
\begin{equation*}
\begin{aligned}
(\widetilde Q\Phi)(x,y)
={}&
\sum_{i=1}^N\sum_{j=1}^{r_i}
\mathbf1_{\{d_i>0\}}
k_{ij}d_i
\bigl[
\Psi(d+w_{ij})-\Psi(d)
\bigr]
\\
&+
\sum_{i=1}^N\sum_{j=1}^{r_i}
\mathbf1_{\{d_i<0\}}
k_{ij}|d_i|
\bigl[
\Psi(d-w_{ij})-\Psi(d)
\bigr]
\\
\le{}&
\sum_{i=1}^N
|d_i|\,
v^{\mathsf T}
\sum_{j=1}^{r_i}
k_{ij}w_{ij}
=
\sum_{i=1}^N
|d_i|
(v^{\mathsf T}A)_i
\\
\le{}&
-(\rho-\varepsilon)
\Psi(d),
\end{aligned}
\end{equation*}
where the last inequality follows from \eqref{eq:weight_ineq}. To proceed, we define
\[
\sigma_m
:=
\inf\left\{
t\ge0:
\|X_t\|_1+\|Y_t\|_1\ge m
\right\}.
\]
Applying the stopped Dynkin formula to the process $e^{(\rho-\varepsilon)t}\Phi(X_t,Y_t)$ gives
\begin{equation*}
\mathbb E
\left[
e^{(\rho-\varepsilon)(t\wedge\sigma_m)}
\Psi_{t\wedge\sigma_m}
\right]
\le
\mathbb E\Psi_0.
\end{equation*}
The coupled process is non-explosive, so $\sigma_m\uparrow\infty$ almost surely. Letting $m\to\infty$ and applying Fatou's lemma yield
\[
\mathbb E\Psi_t
\le
e^{-(\rho-\varepsilon)t}
\mathbb E\Psi_0.
\]
Finally, $v_{\min}\|d\|_1\le\Psi(d)\le v_{\max}\|d\|_1$, and \eqref{eq:Delta_t} follows.
\end{proof}

For $t\ge0$, let $P_t(n,\cdot)$ denote the law of $X_t$ under the initial condition $X_0=n$. More generally, for an initial distribution $\mu$, define
\begin{equation*}
P_t(\mu,\cdot)
:=
\sum_{n\in E}
P_t(n,\cdot)\mu(n).
\end{equation*}
Hence $P_t(\delta_n,\cdot)=P_t(n,\cdot)$, and the distribution $p_t$ used in Section~\ref{sec:prelim} is precisely $P_t(n_0,\cdot)$. The preceding coupling immediately gives exponential convergence within $\mathcal B$.

\begin{proposition}\label{prop:W1_upper}
For every $\varepsilon\in(0,\rho)$ and $n_0\in\mathcal B$, there exists $C(n_0,\varepsilon)<\infty$ such that
\begin{equation*}
W_1
\bigl(
P_t(n_0,\cdot),\pi
\bigr)
\le
C(n_0,\varepsilon)
e^{-(\rho-\varepsilon)t},
\qquad t\ge0.
\end{equation*}
Moreover, there exist constants $C_1(\varepsilon),C_2(\varepsilon)<\infty$, independent of $n_0$, such that
\begin{equation}\label{eq:C_def_stable}
C(n_0,\varepsilon)
\le
C_1(\varepsilon)\|n_0\|_1
+
C_2(\varepsilon).
\end{equation}
\end{proposition}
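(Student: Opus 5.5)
The plan is to combine the coupling contraction of Lemma~\ref{lem:coupling_drift} with stationarity of $\pi$. Fix $\varepsilon\in(0,\rho)$ and $n_0\in\mathcal B$. We take the initial coupling $\gamma:=\delta_{n_0}\otimes\pi$ on $\mathcal B\times\mathcal B$. This is admissible because $\pi$ is supported on $\mathcal B$. We then run the common-reaction coupling $(X_t,Y_t)$ from $(X_0,Y_0)\sim\gamma$.

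By construction, the first marginal has law $P_t(n_0,\cdot)$. The second marginal starts from the stationary distribution, so its law is $\pi P_t=\pi$ for all $t\ge0$. For this last identity we use that $\pi$ solves $\pi Q=0$ and that the chain is non-explosive (Theorem~\ref{thm:non-explosive}); hence the minimal semigroup is honest and $\pi$ is invariant for it.

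The law of $(X_t,Y_t)$ is therefore an element of $\Pi(P_t(n_0,\cdot),\pi)$. By the definition \eqref{eq:W1_def} and Lemma~\ref{lem:coupling_drift},
\begin{equation*}
W_1\bigl(P_t(n_0,\cdot),\pi\bigr)\le \mathbb E\|X_t-Y_t\|_1\le \frac{v_{\max}}{v_{\min}}\,\mathbb E\|n_0-Y_0\|_1\,e^{-(\rho-\varepsilon)t}.
\end{equation*}
It remains to bound the initial discrepancy. By the triangle inequality,
\begin{equation*}
\mathbb E\|n_0-Y_0\|_1\le \|n_0\|_1+\mathbb E_\pi\|X\|_1.
\end{equation*}
The last term is finite by \eqref{temp3} of Proposition~\ref{prop:lyapunov_moments} with $p=1$. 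We therefore set
\begin{equation*}
C_1(\varepsilon):=\frac{v_{\max}}{v_{\min}},\qquad C_2(\varepsilon):=\frac{v_{\max}}{v_{\min}}\sum_{n\in E}\|n\|_1\pi(n),
\end{equation*}
and $C(n_0,\varepsilon):=C_1(\varepsilon)\|n_0\|_1+C_2(\varepsilon)$. This gives both the exponential bound and \eqref{eq:C_def_stable}. Both constants depend on $\varepsilon$ only through the weight vector $v$ from \eqref{eq:weight_ineq}, and they are independent of $n_0$.

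The only delicate point is justifying that the second marginal stays distributed as $\pi$. This requires the coupled process to be well-defined with the correct marginals, which was verified in the construction: the marginal rates match, the process stays in $\mathcal B\times\mathcal B$ because $\mathcal B$ is closed, and the coupled process is non-explosive. It also requires stationarity of $\pi$ under the non-explosive semigroup. A secondary check is that Lemma~\ref{lem:coupling_drift} applies to a random initial condition with $\mathbb E\|\Delta_0\|_1<\infty$; its statement covers exactly this case, since finiteness follows from the first moment of $\pi$.
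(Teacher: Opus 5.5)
Your proposal is correct and follows the paper's proof: you start the common-reaction coupling from $\delta_{n_0}\otimes\pi$, use stationarity to keep the second marginal distributed as $\pi$, apply Lemma~\ref{lem:coupling_drift}, and bound $\mathbb E\|n_0-Y_0\|_1\le\|n_0\|_1+\mathbb E_\pi\|X\|_1$ using Proposition~\ref{prop:lyapunov_moments}. Your extra justification that $\pi P_t=\pi$ is harmless. The paper takes it for granted, and it also follows from $\mathcal B$ being an irreducible positive recurrent class.
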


\begin{proof}
Let $X_0=n_0$ and let $Y_0$ have distribution $\pi$. Apply the preceding coupling to $(X_0,Y_0)$. Since $\pi$ is a stationary distribution, $Y_t\sim\pi$ for every $t\ge0$. Lemma~\ref{lem:coupling_drift} and the coupling characterization of $W_1$ therefore give
\begin{equation*}
\begin{aligned}
W_1
\bigl(
P_t(n_0,\cdot),\pi
\bigr)
&\le
\mathbb E\|X_t-Y_t\|_1
\\
&\le
\frac{v_{\max}}{v_{\min}}
\mathbb E_\pi
\|n_0-Y_0\|_1
e^{-(\rho-\varepsilon)t}
\\
&\le
\frac{v_{\max}}{v_{\min}}
\left(
\|n_0\|_1+
\mathbb E_\pi\|Y_0\|_1
\right)
e^{-(\rho-\varepsilon)t}.
\end{aligned}
\end{equation*}
Proposition~\ref{prop:lyapunov_moments} gives $\mathbb E_\pi\|Y_0\|_1<\infty$, so \eqref{eq:C_def_stable} follows.
\end{proof}

The preceding proposition controls the dynamics after the process has entered $\mathcal B$. To extend the estimate to arbitrary initial states, we need the following lemma.

\begin{lemma}\label{lem:weighted_hitting_stable}
For every $\varepsilon\in(0,\rho)$, there exist a function $U:E\to[1,\infty)$ and a constant $c_U>0$ such that
\begin{equation}\label{eq:U_inequality}
U(n)
\ge
c_U(1+\|n\|_1),
\qquad n\in E,
\end{equation}
and, for every $n\in E$ and $t\ge0$,
\begin{equation}\label{eq:stable_weighted_hitting}
\mathbb E_n
\left[
e^{(\rho-\varepsilon)\tau_{\mathcal B}}
U(X_{\tau_{\mathcal B}})
\right]
\le
U(n),
\qquad
\mathbb E_n
\left[
\mathbf1_{\{\tau_{\mathcal B}>t\}}
U(X_t)
\right]
\le
e^{-(\rho-\varepsilon)t}U(n).
\end{equation}
\end{lemma}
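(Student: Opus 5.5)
We take $U$ to be a linear Lyapunov function $U(n)=1+u^{\mathsf T}n$ with $u>0$, chosen so that $QU\le-(\rho-\varepsilon)U$ \emph{off} $\mathcal B$. The result we need is $(QU)(n)=u^{\mathsf T}(An+b)$. For states in $E\setminus\mathcal B$ we have $n_i\ge1$ for some $i\in I_0$. This lets us absorb both the constant term $u^{\mathsf T}b$ and the additive $1$ into the strictly negative drift contributed by the extinct species. The lower bound \eqref{eq:U_inequality} is then immediate with $c_U=\min\{1,\min_i u_i\}$.

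\textbf{Construction of $u$.} First we show that $b_i=0$ for $i\in I_0$, and that $a_{i\ell}=0$ whenever $i\in I_0$ and $\ell\in I_+$. Both follow from $0=(Az^*+b)_i=\sum_{\ell\ne i}a_{i\ell}z^*_\ell+b_i$, since every term on the right is nonnegative. Consequently $A$ is block lower-triangular with respect to $(I_+,I_0)$ in the sense relevant here, and $b$ is supported on $I_+$.

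Choose $v>0$ with $v^{\mathsf T}A\le-\rho' v^{\mathsf T}$ for some $\rho'\in(\rho-\varepsilon,\rho)$, as in \eqref{eq:weight_ineq} applied with a smaller $\varepsilon$. Then set $u_i=v_i$ for $i\in I_+$ and $u_i=Kv_i$ for $i\in I_0$, with $K$ large.

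\textbf{Drift estimate.} Write $n=(n_+,n_0)$. Then
\[
u^{\mathsf T}(An+b)=\sum_\ell n_\ell (u^{\mathsf T}A)_\ell+v^{\mathsf T}b .
\]
For $\ell\in I_+$, column $\ell$ of $A$ vanishes on the $I_0$ rows, so $(u^{\mathsf T}A)_\ell=(v^{\mathsf T}A)_\ell\le-\rho' u_\ell$. For $\ell\in I_0$ we write $u^{\mathsf T}A_{\cdot\ell}=K(v^{\mathsf T}A)_\ell-(K-1)\sum_{i\in I_+}v_ia_{i\ell}$. The last sum is nonnegative because it consists of off-diagonal entries, so the subtracted term only helps. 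Hence $(u^{\mathsf T}A)_\ell\le-\rho' Kv_\ell=-\rho' u_\ell$.

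Therefore $QU(n)\le-\rho'(U(n)-1)+v^{\mathsf T}b$. Off $\mathcal B$ we have $U(n)-1\ge Kv_{\min}$, so
\[
QU\le-(\rho-\varepsilon)U
\quad\text{provided}\quad
(\rho'-\rho+\varepsilon)Kv_{\min}\ge\rho'+v^{\mathsf T}b .
\]
This holds for $K$ large, which fixes the choice of $K$.

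\textbf{Optional stopping.} Apply the stopped Dynkin formula to $e^{(\rho-\varepsilon)t}U(X_t)$ up to $t\wedge\tau_{\mathcal B}\wedge\sigma_m$, exactly as in Proposition~\ref{prop:lyapunov_moments}. Before $\tau_{\mathcal B}$ the generator term is $\le0$, which gives
\[
\mathbb E_n\big[e^{(\rho-\varepsilon)(t\wedge\tau_{\mathcal B}\wedge\sigma_m)}U(X_{t\wedge\tau_{\mathcal B}\wedge\sigma_m})\big]\le U(n).
\]
By non-explosivity (Theorem~\ref{thm:non-explosive}) we have $\sigma_m\uparrow\infty$, and Fatou's lemma then yields
\[
\mathbb E_n\big[e^{(\rho-\varepsilon)(t\wedge\tau_{\mathcal B})}U(X_{t\wedge\tau_{\mathcal B}})\big]\le U(n).
\]
Restricting to the event $\{\tau_{\mathcal B}>t\}$ gives the second inequality in \eqref{eq:stable_weighted_hitting}. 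Letting $t\to\infty$ on the event $\{\tau_{\mathcal B}<\infty\}$, which has probability one by Theorem~\ref{thm:hitting_exp}, and applying Fatou's lemma again gives the first inequality. For $n\in\mathcal B$ both inequalities are trivial.

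\textbf{Main obstacle.} The delicate step is the sign bookkeeping showing that reweighting the $I_0$ coordinates preserves the drift bound on the $I_+$ columns. This relies on the structural fact that no $I_+$ species feeds into $I_0$. We derive that fact from the nonnegativity of the terms in $(Az^*+b)_i=0$, and it is equivalently what makes $\mathcal B$ closed.
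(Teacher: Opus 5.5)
Your proof is correct and is essentially the paper's argument. The paper also takes $U=V+\zeta V_0+1$, with a linear weight $V$ for $A$ and an extra linear weight $V_0$ on the $I_0$-coordinates. It derives $b_{I_0}=0$ and $a_{i\ell}=0$ (for $i\in I_0$, $\ell\in I_+$) from $Az^*+b=0$, and concludes with the stopped Dynkin formula and Fatou's lemma.

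Your construction amounts to $v_0=v|_{I_0}$ and $\zeta=K-1$, and it is slightly leaner. The lattice bound $u^{\mathsf T}n\ge Kv_{\min}$ on $E\setminus\mathcal B$ replaces the paper's level-set split at $V\le M$.

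One small point: you need not cite Theorem~\ref{thm:hitting_exp} for $\tau_{\mathcal B}<\infty$, and doing so could be circular depending on how that theorem is proved. Since $U\ge1$, your stopped bound already gives $\mathbb E_n[e^{(\rho-\varepsilon)(t\wedge\tau_{\mathcal B})}]\le U(n)$, and monotone convergence then yields $\tau_{\mathcal B}<\infty$ almost surely.
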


\begin{proof}
Let
\[
I_0:=\{i:z_i^*=0\},
\qquad
\mathcal T:=E\setminus\mathcal B.
\]
If $I_0=\varnothing$, then $\mathcal B=E$ and $\tau_{\mathcal B}=0$, so the conclusion holds with $U(n)=1+\|n\|_1$. Hence assume $I_0\ne\varnothing$. For $i\in I_0$, the $i$th equilibrium equation gives
\begin{equation*}
0=(Az^*+b)_i=b_i+\sum_{\ell\in I_+}a_{i\ell}z_\ell^*.
\end{equation*}
Since every term on the right-hand side is non-negative and $z_\ell^*>0$ for $\ell\in I_+$, we have $b_i=0$ and $a_{i\ell}=0$ for $i\in I_0$ and $\ell\in I_+$. After ordering the species so that $I_0$ precedes $I_+$,
\begin{equation*}
A=
\begin{pmatrix}
A_{I_0}&0\\
A_{I_+,I_0}&A_{I_+}
\end{pmatrix}.
\end{equation*}
Thus $\sigma(A_{I_0})\subseteq\sigma(A)$, so the spectral bound of $A_{I_0}$ is at most $-\rho$. By the characterization of stable Metzler matrices, there exist strictly positive vectors $\widehat v\in\mathbb R^N$ and $v_0\in\mathbb R^{|I_0|}$ such that
\begin{equation*}
\widehat v^{\mathsf T}A
\le
-\left(\rho-\frac{\varepsilon}{2}\right)\widehat v^{\mathsf T},
\qquad
v_0^{\mathsf T}A_{I_0}
\le
-\left(\rho-\frac{\varepsilon}{2}\right)v_0^{\mathsf T}.
\end{equation*}
Define $V(n):=\widehat v^{\mathsf T}n$ and $V_0(n):=v_0^{\mathsf T}n_{I_0}$. Then $V_0(n)>0$ on $\mathcal T$ and $V_0(n)=0$ on $\mathcal B$, while
\begin{equation*}
QV(n)
\le
-\left(\rho-\frac{\varepsilon}{2}\right)V(n)+\xi,
\qquad
QV_0(n)
\le
-\left(\rho-\frac{\varepsilon}{2}\right)V_0(n),
\end{equation*}
where $\xi:=\widehat v^{\mathsf T}b$.

Choose $M>0$ such that $\frac{\varepsilon}{2}M\ge\xi+\rho-\varepsilon$ and set $\mathcal L_M:=\{n\in E:V(n)\le M\}$. Since $\widehat v>0$, the set $\mathcal L_M$ is finite. If $\mathcal L_M\cap\mathcal T\ne\varnothing$, define
\begin{equation*}
V_{0,\min}
:=
\min_{n\in\mathcal L_M\cap\mathcal T}V_0(n)>0
\end{equation*}
and choose $\zeta>0$ such that $\zeta\frac{\varepsilon}{2}V_{0,\min}\ge\xi+\rho-\varepsilon$; if $\mathcal L_M\cap\mathcal T=\varnothing$, choose any $\zeta>0$. Set
\begin{equation*}
U(n):=V(n)+\zeta V_0(n)+1.
\end{equation*}
For $n\in\mathcal T$,
\begin{equation*}
QU(n)+(\rho-\varepsilon)U(n)
\le
-\frac{\varepsilon}{2}V(n)
-\zeta\frac{\varepsilon}{2}V_0(n)
+\xi+\rho-\varepsilon.
\end{equation*}
If $V(n)>M$, then the first negative term makes the right-hand side non-positive; if $V(n)\le M$, then $n\in\mathcal L_M\cap\mathcal T$ and $V_0(n)\ge V_{0,\min}$, so the second negative term does so. Hence
\begin{equation}\label{eq:U_drift}
QU(n)+(\rho-\varepsilon)U(n)\le0,
\qquad n\in\mathcal T.
\end{equation}
Moreover, if $\widehat v_{\min}:=\min_i\widehat v_i$ and $c_U:=\min\{1,\widehat v_{\min}\}$, then
\begin{equation*}
U(n)
\ge
1+\widehat v_{\min}\|n\|_1
\ge
c_U(1+\|n\|_1),
\end{equation*}
which proves \eqref{eq:U_inequality}.

It remains to derive the hitting-time estimates. Let $\sigma_m:=\inf\{s\ge0:\|X_s\|_1\ge m\}$. Applying the stopped Dynkin formula to the process $e^{(\rho-\varepsilon)s}U(X_s)$ at $\tau_{\mathcal B}\wedge t\wedge\sigma_m$ and using \eqref{eq:U_drift} give
\begin{equation*}
\mathbb E_n
\left[
e^{(\rho-\varepsilon)(\tau_{\mathcal B}\wedge t\wedge\sigma_m)}
U\left(
X_{\tau_{\mathcal B}\wedge t\wedge\sigma_m}
\right)
\right]
\le
U(n).
\end{equation*}
By non-explosivity, $\sigma_m\uparrow\infty$ almost surely, and Fatou's lemma yields
\begin{equation*}
\mathbb E_n
\left[
e^{(\rho-\varepsilon)(\tau_{\mathcal B}\wedge t)}
U\left(
X_{\tau_{\mathcal B}\wedge t}
\right)
\right]
\le
U(n).
\end{equation*}
Since $U\ge1$, letting $t\to\infty$ shows that $\tau_{\mathcal B}<\infty$ almost surely and gives
\begin{equation*}
\mathbb E_n
\left[
e^{(\rho-\varepsilon)\tau_{\mathcal B}}
U(X_{\tau_{\mathcal B}})
\right]
\le
U(n).
\end{equation*}
On the other hand, restricting the preceding stopped estimate to $\{\tau_{\mathcal B}>t\}$ gives
\begin{equation*}
e^{(\rho-\varepsilon)t}
\mathbb E_n
\left[
\mathbf1_{\{\tau_{\mathcal B}>t\}}U(X_t)
\right]
\le
U(n).
\end{equation*}
These are precisely the two estimates in \eqref{eq:stable_weighted_hitting}.
\end{proof}

We are now in a position to prove Theorem~\ref{thm:stable_case}.

\begin{proof}[Proof of Theorem~\ref{thm:stable_case}]
Fix $\varepsilon\in(0,\rho)$. By Proposition~\ref{prop:W1_upper}, there exist $C_1(\varepsilon),C_2(\varepsilon)<\infty$ such that
\begin{equation}\label{eq:W1_on_R}
W_1
\bigl(
P_t(n,\cdot),\pi
\bigr)
\le
\left[
C_1(\varepsilon)\|n\|_1+
C_2(\varepsilon)
\right]
e^{-(\rho-\varepsilon)t}
\end{equation}
for every $n\in\mathcal B$ and $t\ge0$. By the strong Markov property at $\tau_{\mathcal B}$,
\begin{equation*}
\begin{aligned}
P_t(n_0,\cdot)
={}&
\mathbb E_{n_0}
\left[
\mathbf1_{\{\tau_{\mathcal B}\le t\}}
P_{t-\tau_{\mathcal B}}
(X_{\tau_{\mathcal B}},\cdot)
\right]
+
\mathbb E_{n_0}
\left[
\mathbf1_{\{\tau_{\mathcal B}>t\}}
\delta_{X_t}
\right].
\end{aligned}
\end{equation*}
Convexity of $W_1$ under mixtures therefore gives
\begin{equation}\label{eq:split}
\begin{aligned}
W_1
\bigl(
P_t(n_0,\cdot),\pi
\bigr)
\le{}&
\mathbb E_{n_0}
\left[
\mathbf1_{\{\tau_{\mathcal B}\le t\}}
W_1
\left(
P_{t-\tau_{\mathcal B}}
(X_{\tau_{\mathcal B}},\cdot),
\pi
\right)
\right]
\\
&+
\mathbb E_{n_0}
\left[
\mathbf1_{\{\tau_{\mathcal B}>t\}}
W_1(\delta_{X_t},\pi)
\right].
\end{aligned}
\end{equation}

We first prove part~(i). Let $U$ and $c_U$ be given by Lemma~\ref{lem:weighted_hitting_stable}. It follows from \eqref{eq:W1_on_R}, \eqref{eq:U_inequality}, and \eqref{eq:stable_weighted_hitting} that
\begin{equation*}
\begin{aligned}
&\mathbb E_{n_0}
\left[
\mathbf1_{\{\tau_{\mathcal B}\le t\}}
W_1
\left(
P_{t-\tau_{\mathcal B}}
(X_{\tau_{\mathcal B}},\cdot),
\pi
\right)
\right]
\\
&\qquad\le
\frac{
C_1(\varepsilon)+C_2(\varepsilon)
}{c_U}
e^{-(\rho-\varepsilon)t}
\mathbb E_{n_0}
\left[
e^{(\rho-\varepsilon)\tau_{\mathcal B}}
U(X_{\tau_{\mathcal B}})
\right]
\\
&\qquad\le
\frac{
C_1(\varepsilon)+C_2(\varepsilon)
}{c_U}
U(n_0)
e^{-(\rho-\varepsilon)t}.
\end{aligned}
\end{equation*}
By Proposition~\ref{prop:lyapunov_moments}, $m_\pi:=\sum_{y\in E}\|y\|_1\pi(y)<\infty$.
For any $x\in E$,
\[
W_1(\delta_x,\pi)
=
\sum_{y\in E}
\|x-y\|_1\pi(y)
\le
\|x\|_1+m_\pi.
\]
Hence
\begin{equation*}
\begin{aligned}
\mathbb E_{n_0}
\left[
\mathbf1_{\{\tau_{\mathcal B}>t\}}
W_1(\delta_{X_t},\pi)
\right]
\le
\frac{1+m_\pi}{c_U}
\mathbb E_{n_0}
\left[
\mathbf1_{\{\tau_{\mathcal B}>t\}}
U(X_t)
\right]
\le
\frac{1+m_\pi}{c_U}
U(n_0)
e^{-(\rho-\varepsilon)t}.
\end{aligned}
\end{equation*}
Substituting the preceding two estimates into \eqref{eq:split} yields
\[
W_1
\bigl(
P_t(n_0,\cdot),\pi
\bigr)
\le
C(n_0,\varepsilon)
e^{-(\rho-\varepsilon)t},
\qquad t\ge0,
\]
which proves part~(i).

We next prove part~(ii). By the Perron-Frobenius theorem for stable Metzler matrices, there exists $u\ge0$, $u\ne0$, such that $A^{\mathsf T}u = -\rho u$. Proposition~\ref{prop:lyapunov_moments} implies $m_u:=\mathbb E_\pi[u^{\mathsf T}X]<\infty$.
Choose $i$ such that $u_i>0$ and then choose an integer $M$ sufficiently large that $\Delta:=u^{\mathsf T}(Me_i)-m_u>0$.
Set $n_0^*:=Me_i$. Since the mean dynamics satisfy
\[
\frac{\mathrm d}{\mathrm dt}
\mathbb E X_t
=
A\mathbb E X_t+b,
\]
and the stationary mean satisfies the same equation at equilibrium,
\begin{equation*}
\begin{aligned}
\mathbb E_{n_0^*}
[u^{\mathsf T}X_t]
-
m_u
&=
u^{\mathsf T}
e^{tA}
\left(
n_0^*-\mathbb E_\pi X
\right)
=
\Delta e^{-\rho t}.
\end{aligned}
\end{equation*}
The function $f(n):=u^{\mathsf T}n/\|u\|_\infty$ is $1$-Lipschitz with respect to the $\ell^1$ metric. Kantorovich-Rubinstein duality \eqref{eq:KR_duality} therefore gives
\begin{equation*}
\begin{aligned}
W_1
\bigl(
P_t(n_0^*,\cdot),\pi
\bigr)
&\ge
\frac{
\left|
\mathbb E_{n_0^*}
[u^{\mathsf T}X_t]
-
m_u
\right|
}{
\|u\|_\infty
}
=
\frac{\Delta}{\|u\|_\infty}
e^{-\rho t},
\qquad t\ge0.
\end{aligned}
\end{equation*}
This proves part~(ii) and completes the proof.
\end{proof}

\section{Proof of Theorem~\ref{thm:critical_regular}}\label{sec:critical_regular_proof}
Throughout this section, fix $n_0\in E$ and assume that $\lambda_{\max}=0$, conditions (C1)-(C2) hold, and the regularity condition is satisfied for $n_0$. The main difference from the stable case is the presence of conserved zero modes. These modes generate a finite ``source'' component, while all remaining persistent species form an exponentially stable downstream subsystem. The proof is based on this decomposition.

As noted in Remark~\ref{rem:basis_independence}, the partition into sets $\mathcal B_c$ is independent of the particular non-negative basis chosen for $\ker(A^{\mathsf T})$, up to a relabeling of $c$. We thus use throughout this section the adapted non-negative basis $\{\alpha^{(1)},\dots,\alpha^{(m)}\}$ constructed in Supplementary Lemma~S3.2. Define
\begin{equation*}
K(n_0)
:=
\left\{
k:
\alpha^{(k)}\cdot n_0>0
\right\},
\end{equation*}
and, for $k\in K(n_0)$,
\begin{equation*}
Z_k
:=
\left\{
i\in I_+:
\alpha_i^{(k)}>0
\right\}.
\end{equation*}
Set
\begin{equation*}
Z
:=
\bigcup_{k\in K(n_0)}Z_k,
\qquad
D
:=
I_+\setminus Z.
\end{equation*}
We refer to $Z_k$ as the $k$th source block, to $Z$ as the source set, and to $D$ as the downstream set. The structural results in Supplementary Section~S3 show that the source blocks are nonempty and pairwise disjoint and coincide with the activated zero-eigenvalue strongly connected components. Moreover, $\alpha^{(k)}$ is constant and strictly positive on $Z_k$. By rescaling the basis vector, we may normalize this constant to one, so that, on $I_+$,
\begin{equation*}
\alpha_i^{(k)}
=
\mathbf1_{\{i\in Z_k\}},
\qquad
k\in K(n_0).
\end{equation*}
The regularity condition then implies that every reaction of the restricted network preserves the total number of molecules in each source block $Z_k$. In particular, a first-order reaction with reactant in $Z_k$ replaces the reactant by exactly one molecule in the same source block and may additionally produce molecules in $D$. No zero-order reaction and no reaction with reactant in $D$ can produce molecules in $Z$. Thus the source coordinates evolve autonomously and drive, but are not influenced by, the downstream coordinates.

For every $c$ such that $\mathcal B_c\ne\varnothing$ and every $n\in\mathcal B_c$,
\begin{equation*}
\sum_{i\in Z_k}n_i
=
\alpha^{(k)}\cdot n
=
c_k,
\qquad
k\in K(n_0),
\end{equation*}
and hence
\begin{equation*}
\|n_Z\|_1
=
\sum_{k\in K(n_0)}c_k
=:
L_c.
\end{equation*}
Thus $c_k$ is the conserved number of source molecules in $Z_k$, while $L_c$ is the total source population on $\mathcal B_c$. The possible source configurations therefore form the finite set
\begin{equation*}
\mathcal Z_c
:=
\left\{
z\in\mathbb N^{|Z|}:
\sum_{i\in Z_k}z_i=c_k,
\quad
k\in K(n_0)
\right\},
\end{equation*}
with the convention $\mathcal Z_c=\{\mathbf0\}$ when $K(n_0)=\varnothing$.

We first establish the moment bounds needed below.

\begin{proposition}\label{prop:lyapunov_moments_crit}
Let $c\in\mathbb R_{\ge0}^m$ satisfy $\mathcal B_c\ne\varnothing$. Then $\mathcal B_c$ is closed, and the process restricted to the set $\mathcal B_c$ admits a stationary distribution $\pi_c$. Moreover, there exists $\theta>0$ such that, for every $n\in\mathcal B_c$,
\begin{equation}\label{eq:exponential moment}
\sup_{t\ge0}
\mathbb E_n
\left[
e^{\theta\|X_t\|_1}
\right]
<
\infty,
\qquad
\mathbb E_{\pi_c}
\left[
e^{\theta\|X\|_1}
\right]
<
\infty.
\end{equation}
In particular,
\begin{equation*}
\sup_{t\ge0}\mathbb E_n\|X_t\|_1^p<\infty,
\qquad
\mathbb E_{\pi_c}\|X\|_1^p<\infty
\end{equation*}
for every integer $p\ge1$.
\end{proposition}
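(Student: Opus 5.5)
The proof will use the source/downstream decomposition described just before the statement. First, closedness of $\mathcal B_c$. By Theorem~\ref{thm:hitting_critical}, $\mathcal B$ is closed, so from $n\in\mathcal B$ only reactions of the restricted network $\mathcal R'$ fire with positive propensity; reactions with reactant in $I_0$ have propensity zero. Every $\alpha^{(k)}$ vanishes on $I_0$ when evaluated at states of $\mathcal B$. The regularity condition gives $\alpha^{(k)}|_{I_+}\cdot w_R|_{I_+}=0$ for all $R|_{I_+}\in\mathcal R'$, and since $w_R|_{I_0}=0$ for these reactions (as in the proof of Corollary~\ref{cor:fully_conservative}), we get $\alpha^{(k)}\cdot w_R=0$. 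Hence each $\alpha^{(k)}\cdot X_t$ is constant along transitions inside $\mathcal B$, and $\mathcal B_c$ is closed.

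Next, the moment bounds. On $\mathcal B_c$ the source coordinates $X_Z$ take values in the finite set $\mathcal Z_c$, so $\|X_Z\|_1=L_c$ is bounded. The coordinates in $I_0$ are zero, so only the downstream coordinates $X_D$ need control. Their mean dynamics are governed by the principal submatrix $A_D$ together with inputs from $Z$ (bounded by $L_c$) and zero-order reactions. I claim $A_D$ is stable, i.e.\ has spectral bound $<0$. The structural results of Supplementary Section~S3 identify the zero-eigenvalue strongly connected components activated by $n_0$ with the source blocks. Any zero-eigenvalue component not activated has $\alpha^{(k)}\cdot n_0=0$, so it is not in $I_+$. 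Hence the downstream components of $A$ restricted to $D$ all have eigenvalues with negative real parts, and $\lambda_{\max}(A_D)\le-\lambda_{\rm gap}<0$. This spectral claim is the main obstacle. I would establish it by writing $A$ in Frobenius normal form and checking that any class in $D$ with zero spectral bound would carry a nonnegative left null vector of $A$ positive on $D$ (by semisimplicity (C1)), contradicting the definition of $Z$.

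Given stability of $A_D$, choose $v_D>0$ with $v_D^{\mathsf T}A_D\le-\rho_D v_D^{\mathsf T}$, using the stable-Metzler characterization as in \eqref{eq:weight_ineq}. Set $V(n)=v_D^{\mathsf T}n_D$. Then on $\mathcal B_c$,
\begin{equation*}
QV(n)\le-\rho_D V(n)+v_D^{\mathsf T}\bigl(b_D+A_{D,Z}n_Z\bigr)\le-\rho_D V(n)+\xi_c ,
\end{equation*}
with $\xi_c<\infty$ because $\|n_Z\|_1=L_c$. The total jump rate is at most $C_0(1+V(n)+L_c)$. I then repeat verbatim the argument of Proposition~\ref{prop:lyapunov_moments}. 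Take $W=e^{\theta_0V}$ with $\theta_0$ small, use $e^x-1\le x+2x^2$ to obtain $QW\le-\beta W+C$ on $\mathcal B_c$, and apply the stopped Dynkin formula, non-explosivity (Theorem~\ref{thm:non-explosive}) and Fatou's lemma to get $\sup_t\mathbb E_nW(X_t)<\infty$. Since $\|n\|_1\le L_c+V(n)/\min v_D$ on $\mathcal B_c$, this gives the first bound in \eqref{eq:exponential moment} with $\theta=\theta_0\min v_D$.

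Finally, existence of $\pi_c$ and its moments. The drift $QW\le-\beta W+C$, together with finiteness of the sublevel sets $\{W\le r\}\cap\mathcal B_c$ (finite because $X_Z$ ranges over a finite set), is a Foster–Lyapunov condition. It ensures that the chain on $\mathcal B_c$ reaches a finite set with finite expected return times. Hence some closed class in $\mathcal B_c$ is positive recurrent, and a stationary distribution $\pi_c$ exists, for instance as a weak limit of Cesàro averages of $P_t(n,\cdot)$, which are tight by the uniform bound. Integrating the drift inequality against $\pi_c$, or using Fatou on the Cesàro averages, gives $\mathbb E_{\pi_c}W\le C/\beta$. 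The polynomial moments follow from the exponential ones. Uniqueness of $\pi_c$ is not required here and is obtained later via the coupling.
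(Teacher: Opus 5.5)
Your proposal is correct and follows essentially the same route as the paper. Closedness of $\mathcal B_c$ comes from closedness of $\mathcal B$ together with regularity. The moment bounds come from an exponential Lyapunov function $e^{\theta_0 v_D^{\mathsf T}n_D}$ built on the stable downstream block $A_D$, with the source coordinates bounded by $L_c$ on $\mathcal B_c$, after which the argument of Proposition~\ref{prop:lyapunov_moments} is repeated.

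Your Frobenius-normal-form sketch of $\lambda_{\max}(A_D)<0$ is sound. Under (C1), a zero-spectral-bound class inside $D\subseteq I_+$ would carry a nonnegative left null vector of $A$. That would force some $\alpha^{(k)}$ to be positive on the class, so $\alpha^{(k)}\cdot n_0=\alpha^{(k)}\cdot z^*>0$ and $k\in K(n_0)$, contradicting the definition of $D$. This is the structural fact the paper takes from its supplementary lemmas.
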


\begin{proof}
The proof is given in Supplementary Section~S3.
\end{proof}

We next establish a contraction estimate on a fixed nonempty set $\mathcal B_c$. Let $P_t^c$ denote the semigroup of the process restricted to the set $\mathcal B_c$, and write $n=(n_Z,n_D)\in\mathcal Z_c\times\mathbb N^{|D|}$ for $n\in\mathcal B_c$. Fix $\varepsilon\in(0,\lambda_{\mathrm{gap}})$. If $D\ne\varnothing$, order the species as $I_0,Z,D$. The closedness of $\mathcal B$, together with the one-way structure from $Z$ to $D$, gives
\begin{equation*}
A
=
\begin{pmatrix}
A_{I_0}&0&0\\
*&A_Z&0\\
*&*&A_D
\end{pmatrix}.
\end{equation*}
All zero eigenmodes supported in $I_+$ are carried by the source blocks in $Z$. Hence $0\notin\sigma(A_D)$. Since $A_D$ is a diagonal block of the above block-triangular representation, $\sigma(A_D)\subseteq\sigma(A)$ and therefore
\begin{equation*}
\lambda_{\max}(A_D)
\le
-\lambda_{\mathrm{gap}}.
\end{equation*}
Therefore the downstream subsystem is exponentially stable. We now construct a coupling of two copies $X_t=(Z_t,D_t)$ and $Y_t=(Z_t',D_t')$, starting from $n=(n_Z,n_D)$ and $n'=(n_Z',n_D')$ in the same set $\mathcal B_c$.

We first construct a coupling for the source components. For every $k\in K(n_0)$, both source configurations contain exactly $c_k$ particles in $Z_k$. Label these particles by $1,\dots,c_k$ in each copy and pair particles having the same label, so that there are altogether $L_c$ labelled pairs. Consider the motion of a single labelled particle in $Z_k$. For each reaction $R_{ij}$ with $i\in Z_k$, regularity implies that there exists a unique $\chi_{ij}\in Z_k$ such that
\begin{equation*}
R_{ij}:
\qquad
S_i
\longrightarrow
S_{\chi_{ij}}
+
\sum_{h\in D}
\nu_h^{ij}S_h.
\end{equation*}
The source position of the labelled particle hence evolves as a finite irreducible continuous-time Markov chain $(\Xi_t^{(k)})_{t\ge0}$ on $Z_k$. We retain the full reaction-channel marks, including reaction events that leave the source position unchanged but produce downstream molecules, and denote the transition probabilities of the source-position chain by
\begin{equation*}
p_t^{(k)}(i,h)
:=
\mathbb P
\left(
\Xi_t^{(k)}=h
\mid
\Xi_0^{(k)}=i
\right).
\end{equation*}
We couple each labelled pair on successive time intervals $[0,T],[T,2T],\dots$. If the two particles occupy the same source species at a grid time, they are coupled synchronously from that time onward, using the same reaction clocks and channel marks. If instead they occupy distinct states $i,i'\in Z_k$ at time $rT$, then at time $(r+1)T$ their marginal distributions are $p_T^{(k)}(i,\cdot)$ and $p_T^{(k)}(i',\cdot)$, which we couple maximally. Thus there exists a coupling $\mathsf M_{i,i'}^{(k)}$ satisfying
\begin{equation*}
\sum_{h'\in Z_k}
\mathsf M_{i,i'}^{(k)}(h,h')
=
p_T^{(k)}(i,h),
\qquad
\sum_{h\in Z_k}
\mathsf M_{i,i'}^{(k)}(h,h')
=
p_T^{(k)}(i',h'),
\end{equation*}
and
\begin{equation*}
\sum_{h\in Z_k}
\mathsf M_{i,i'}^{(k)}(h,h)
=
1-
\left\|
p_T^{(k)}(i,\cdot)
-
p_T^{(k)}(i',\cdot)
\right\|_{\mathrm{TV}}.
\end{equation*}
For each sampled endpoint pair, the paths over the grid interval are completed using the corresponding marked bridge laws, which preserves the complete marked source-process law in each marginal. The nonzero eigenvalues of the source-position generators are nonzero eigenvalues of $A$, so their spectral gaps are at least $\lambda_{\mathrm{gap}}$. Since there are only finitely many source blocks, finite-state spectral theory allows us to choose $T>0$ so that the probability that an uncoupled labelled pair remains uncoupled contracts uniformly at any prescribed exponential rate below $\lambda_{\mathrm{gap}}$; below we use the rate $\lambda_{\mathrm{gap}}-\varepsilon$. Different labelled pairs are coupled independently until they meet, after which the sticky synchronous coupling described above is used.

We next construct a coupling for the downstream components. Condition on the complete marked source paths. Every source reaction and its downstream product vector are then fixed. Whenever a marked source reaction occurs in one coordinate, its downstream products are added to the corresponding downstream component. Once a source pair has coupled, its marked source paths coincide, so all subsequent downstream products generated by that pair are added simultaneously to both downstream components. Between source-reaction marks, we couple the downstream dynamics exactly as in the stable case. At a state $(d,d')\in\mathbb N^{|D|}\times\mathbb N^{|D|}$, every zero-order reaction $R_{0j}$ induces
\begin{equation*}
(d,d')
\longrightarrow
\bigl(
d+\nu_{0j}|_D,
d'+\nu_{0j}|_D
\bigr)
\qquad
\text{at rate }k_{0j},
\end{equation*}
and every first-order reaction $R_{ij}$ with $i\in D$ induces
\begin{equation*}
\begin{array}{lll}
(d,d')
\longrightarrow
\bigl(
d+w_{ij}|_D,
d'+w_{ij}|_D
\bigr)
&\text{at rate}&
k_{ij}\min\{d_i,d_i'\},
\\[1mm]
(d,d')
\longrightarrow
\bigl(
d+w_{ij}|_D,
d'
\bigr)
&\text{at rate}&
k_{ij}(d_i-d_i')_+,
\\[1mm]
(d,d')
\longrightarrow
\bigl(
d,
d'+w_{ij}|_D
\bigr)
&\text{at rate}&
k_{ij}(d_i'-d_i)_+.
\end{array}
\end{equation*}
The marked source construction preserves the correct source dynamics in each marginal, while the above rates sum to the correct downstream reaction rates. Hence both marginals are distributed according to the original process restricted to the set $\mathcal B_c$. Non-explosivity follows from non-explosivity of the marginals. When $D=\varnothing$, only the source coupling is required.

The resulting coupling yields the following estimate.

\begin{lemma}\label{lem:critical_semigroup_comparison}
For the coupling constructed above, there exists $C_W(\varepsilon)<\infty$, independent of $c$, such that
\begin{equation}\label{eq:W1}
W_1
\bigl(
P_t^c(n,\cdot),
P_t^c(n',\cdot)
\bigr)
\le
C_W(\varepsilon)
\bigl(
1+\|n\|_1+\|n'\|_1
\bigr)
e^{-(\lambda_{\mathrm{gap}}-\varepsilon)t},
\qquad t\ge0.
\end{equation}
\end{lemma}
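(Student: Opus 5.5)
The plan is to bound $W_1$ by $\mathbb E\|X_t-Y_t\|_1$ under the coupling, splitting it into a source part and a downstream part:
\[
\mathbb E\|X_t-Y_t\|_1\le \mathbb E\|Z_t-Z_t'\|_1+\mathbb E\|D_t-D_t'\|_1 .
\]
Set $r:=\lambda_{\mathrm{gap}}-\varepsilon$. Then the argument goes in three steps.

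\textbf{Source term.} Each labelled pair contributes at most $2$ to $\|Z_t-Z_t'\|_1$, and only while it is still uncoupled. By the choice of $T$, the probability that a given pair is uncoupled at time $t$ is at most $C_0e^{-rt}$, with $C_0$ depending only on the finitely many source-block generators (the constant absorbs the loss from $t$ not lying on the grid). Since there are $L_c\le\|n\|_1$ labelled pairs, this gives
\[
\mathbb E\|Z_t-Z_t'\|_1\le 2C_0\|n\|_1e^{-rt}.
\]

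\textbf{Downstream drift.} I choose $v>0$ on $D$ with $v^{\mathsf T}A_D\le -(\lambda_{\mathrm{gap}}-\varepsilon/2)v^{\mathsf T}$; this is possible by the stable-Metzler characterization, since $\lambda_{\max}(A_D)\le-\lambda_{\mathrm{gap}}$. Let $\Psi(d)=\sum_{h\in D}v_h|d_h|$ and $\Psi_t:=\Psi(D_t-D_t')$. Between source marks, the computation in Lemma~\ref{lem:coupling_drift} carries over verbatim, with $A$ replaced by $A_D$. It gives a generator contribution of at most $-(\lambda_{\mathrm{gap}}-\varepsilon/2)\Psi$, because zero-order reactions and matched reactions do not change the difference. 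Source reactions fired by a coupled pair add the same vector to both copies. An uncoupled pair whose particles sit at $i\ne i'$ fires reactions at bounded rates $\sum_j k_{ij}$. Each such firing changes $\Psi$ by at most $\kappa:=\max_{ij}v^{\mathsf T}\nu_{ij}|_D$. So the source forcing adds at most $C_1\cdot\#\{\text{uncoupled pairs at time }s\}$ to the drift. Conditioning on the marked source paths and then averaging, the stopped Dynkin formula (with localization and Fatou, as in Lemma~\ref{lem:coupling_drift}, using non-explosivity) gives
\[
\frac{d}{ds}\mathbb E\Psi_s\le-(\lambda_{\mathrm{gap}}-\varepsilon/2)\mathbb E\Psi_s+C_1C_0\|n\|_1e^{-rs}.
\]

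\textbf{Gr\"onwall.} Since $\lambda_{\mathrm{gap}}-\varepsilon/2>r$, integrating yields
\[
\mathbb E\Psi_t\le \Psi(n_D-n_D')e^{-(\lambda_{\mathrm{gap}}-\varepsilon/2)t}+\frac{2C_1C_0}{\varepsilon}\|n\|_1e^{-rt}.
\]
The bound $\Psi(n_D-n_D')\le v_{\max}(\|n\|_1+\|n'\|_1)$ and the comparison $v_{\min}\|\cdot\|_1\le\Psi$ then give \eqref{eq:W1}. None of the constants depends on $c$. They involve only $v$, the rates, $T$, and the source-block generators, while the dependence on $c$ enters solely through $L_c\le\|n\|_1$.

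The main obstacle is the rigorous justification of the conditioned downstream drift estimate. The downstream coupling is a time-inhomogeneous chain driven by the fixed marked source paths, and the bridge construction on the grid intervals makes the source firing times non-Markovian with respect to the downstream filtration. I would handle this by treating the source forcing pathwise, as a deterministic jump input between Dynkin applications. I would then bound the expected number of unmatched source firings on $[s,s+ds]$ by $C\,\mathbb P(\text{pair uncoupled at the last grid time before }s)$, using that each marginal is a genuine marked Markov chain with bounded firing rates. If a uniform rate bound is awkward through the bridges, a fallback is to bound by the number of firings on a whole grid interval $[rT,(r+1)T]$, which has expectation at most $T\max_i\sum_jk_{ij}$; this costs only a constant factor $e^{rT}$.
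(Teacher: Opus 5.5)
Your proposal is correct and follows essentially the same route the coupling construction is built for: you bound $W_1$ by $\mathbb E\|X_t-Y_t\|_1$, control the source mismatch by $2L_c\le 2\|n\|_1$ times the grid-contracted probability that a labelled pair is not yet synchronized, and control the downstream mismatch with the weighted $\ell^1$ functional for the stable block $A_D$ (stable-case contraction plus forcing from unsynchronized source firings), closed by a Duhamel/Gr\"onwall step at the faster rate $\lambda_{\mathrm{gap}}-\varepsilon/2$. The paper defers this proof to a supplement not reproduced here, so the comparison is with the main-text construction, and your fallback in the last paragraph is the right way to make the forcing term rigorous. There you count unmatched firings per grid interval $[kT,(k+1)T]$, using that, given the state at $kT$, each particle's path is marginally the Markov chain with firing rate at most $\max_i\sum_j k_{ij}$; this requires ``uncoupled'' to mean ``not synchronized at the last grid time'', which also covers pairs momentarily at the same species inside an interval, and not merely ``at distinct positions''.
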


\begin{proof}
The proof is given in Supplementary Section~S3.
\end{proof}

We next pass from pairwise contraction to convergence toward stationarity.

\begin{proposition}\label{prop:W1_upper_crit}
Every nonempty $\mathcal B_c$ admits a unique stationary distribution $\pi_c = (\pi_c(n))_{n\in\mathcal B_c}$. Moreover, there exists a constant $C_0<\infty$, independent of $c$, such that
\begin{equation}\label{eq:stationary_class_moment_crit}
m_c
:=
\sum_{y\in\mathcal B_c}
\|y\|_1\pi_c(y)
\le
C_0(1+L_c).
\end{equation}
For every $\varepsilon\in(0,\lambda_{\mathrm{gap}})$, there exist constants $C_1(\varepsilon),C_2(\varepsilon)<\infty$, independent of $c$, such that
\begin{equation}\label{eq:C_def_critical}
W_1
\bigl(
P_t^c(x,\cdot),\pi_c
\bigr)
\le
\left[
C_1(\varepsilon)\|x\|_1+
C_2(\varepsilon)
\right]
e^{-(\lambda_{\mathrm{gap}}-\varepsilon)t}
\end{equation}
for every $x\in\mathcal B_c$ and $t\ge0$.
\end{proposition}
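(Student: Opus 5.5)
Existence of a stationary distribution $\pi_c$ on $\mathcal B_c$ comes from Proposition~\ref{prop:lyapunov_moments_crit}. The plan is to obtain uniqueness, the moment bound \eqref{eq:stationary_class_moment_crit}, and the convergence estimate \eqref{eq:C_def_critical} from the pairwise contraction of Lemma~\ref{lem:critical_semigroup_comparison}, averaged against stationary distributions.

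\emph{Uniqueness.} Suppose $\pi_c$ and $\pi_c'$ are two stationary distributions on $\mathcal B_c$. By Proposition~\ref{prop:lyapunov_moments_crit} (applied to any stationary law on the closed set $\mathcal B_c$), both have finite first moments. Let $\gamma$ be the product coupling $\pi_c\otimes\pi_c'$. Stationarity and convexity of $W_1$ under mixtures, together with \eqref{eq:W1}, give for every $t\ge0$
\begin{equation*}
W_1(\pi_c,\pi_c')
\le\sum_{n,n'}\gamma(n,n')\,W_1\bigl(P_t^c(n,\cdot),P_t^c(n',\cdot)\bigr)
\le C_W(\varepsilon)\bigl(1+m_c+m_c'\bigr)e^{-(\lambda_{\mathrm{gap}}-\varepsilon)t}.
\end{equation*}
Letting $t\to\infty$ yields $\pi_c=\pi_c'$.

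\emph{Moment bound.} For \eqref{eq:stationary_class_moment_crit} I would use a linear Lyapunov function that does not depend on $c$. The source part is trivial: on $\mathcal B_c$ one has $\|n_Z\|_1=L_c$ exactly. For the downstream part, $\lambda_{\max}(A_D)\le-\lambda_{\mathrm{gap}}<0$, so there is $v_D>0$ with $v_D^{\mathsf T}A_D\le-\kappa v_D^{\mathsf T}$ for some $\kappa>0$. Set $V(n)=v_D^{\mathsf T}n_D$. Using the block structure of $A$ on $\mathcal B$ (where $n_{I_0}=0$),
\begin{equation*}
QV(n)=v_D^{\mathsf T}\bigl(A_{D,Z}n_Z+A_Dn_D+b_D\bigr)\le-\kappa V(n)+C(1+L_c),
\end{equation*}
where $C$ depends only on the network. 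Integrating against $\pi_c$ gives $\mathbb E_{\pi_c}[QV]=0$, hence $\mathbb E_{\pi_c}V\le C(1+L_c)/\kappa$. This integration is justified because $\pi_c$ has finite second moments by Proposition~\ref{prop:lyapunov_moments_crit}, so $QV$ is $\pi_c$-integrable; alternatively one can use a stopped Dynkin argument started from $\pi_c$ and pass to the limit. The bound $m_c\le C_0(1+L_c)$ then follows from $\|n\|_1=L_c+\|n_D\|_1$ on $\mathcal B_c$.

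\emph{Convergence estimate.} Take $Y_0\sim\pi_c$ and apply \eqref{eq:W1}:
\begin{equation*}
W_1\bigl(P_t^c(x,\cdot),\pi_c\bigr)\le C_W(\varepsilon)\bigl(1+\|x\|_1+m_c\bigr)e^{-(\lambda_{\mathrm{gap}}-\varepsilon)t}.
\end{equation*}
Since $x\in\mathcal B_c$, we have $L_c=\|x_Z\|_1\le\|x\|_1$, so $m_c\le C_0(1+\|x\|_1)$. This gives \eqref{eq:C_def_critical} with $C_1=C_W(1+C_0)$ and $C_2=C_W(1+C_0)$, both independent of $c$.

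The main obstacle is making the moment bound uniform in $c$. The constant $\kappa$ and the drift constant $C$ must come only from $A_D$, $A_{D,Z}$ and $b$, not from the exponential moments of Proposition~\ref{prop:lyapunov_moments_crit}, which may depend on $c$. The same care is needed in justifying $\mathbb E_{\pi_c}[QV]=0$ without introducing any $c$-dependence.
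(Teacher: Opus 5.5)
Your proposal is correct and follows essentially the route the paper sets up. Uniqueness and \eqref{eq:C_def_critical} come from integrating the pairwise contraction of Lemma~\ref{lem:critical_semigroup_comparison} against stationary laws, using $L_c=\|x_Z\|_1\le\|x\|_1$ to absorb $m_c$. The $c$-uniform bound \eqref{eq:stationary_class_moment_crit} comes from the exponentially stable downstream block with the source mass frozen at $L_c$.

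One small repair is needed in the uniqueness step. Proposition~\ref{prop:lyapunov_moments_crit} only gives moments for the stationary law it constructs, not for an arbitrary invariant law. You can close this in either of two ways:
\begin{itemize}
\item First apply your truncated Dynkin bound $\mathbb E_n V(X_t)\le e^{-\kappa t}V(n)+C(1+L_c)/\kappa$ to the arbitrary invariant law, which gives it a finite first moment.
\item Argue in total variation instead. Each term $\min\{1,C_W(\varepsilon)(1+\|n\|_1+\|n'\|_1)e^{-(\lambda_{\mathrm{gap}}-\varepsilon)t}\}$ tends to zero and is dominated by $1$, so no moments are needed.
\end{itemize}
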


\begin{proof}
The proof is given in Supplementary Section~S3.
\end{proof}

The preceding proposition controls the dynamics after the process has entered $\mathcal B$. To extend the estimate to the original initial state $n_0$, we use the following weighted hitting-time estimate, which requires only conditions (C1)-(C2) and not regularity.

\begin{lemma}\label{lem:weighted_exp_moment_crit}
Let $\tau_{\mathcal B}$ denote the hitting time of $\mathcal B$. For every $\varepsilon\in(0,\lambda_{\mathrm{gap}})$,
\begin{equation*}
\mathbb E_{n_0}
\left[
(1+\|X_{\tau_{\mathcal B}}\|_1)
e^{(\lambda_{\mathrm{gap}}-\varepsilon)
\tau_{\mathcal B}}
\right]
+
\sup_{t\ge0}
e^{(\lambda_{\mathrm{gap}}-\varepsilon)t}
\mathbb E_{n_0}
\left[
\mathbf1_{\{\tau_{\mathcal B}>t\}}
(1+\|X_t\|_1)
\right]
<
\infty.
\end{equation*}
\end{lemma}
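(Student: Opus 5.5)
The approach mirrors Lemma~\ref{lem:weighted_hitting_stable}: construct a Lyapunov function $U:E\to[1,\infty)$ with $U(n)\ge c_U(1+\|n\|_1)$ such that
\begin{equation*}
QU(n)+(\lambda_{\mathrm{gap}}-\varepsilon)U(n)\le 0,\qquad n\in\mathcal T:=E\setminus\mathcal B.
\end{equation*}
Once $U$ is available, the stopped Dynkin argument from the stable case applies verbatim. We stop $e^{(\lambda_{\mathrm{gap}}-\varepsilon)s}U(X_s)$ at $\tau_{\mathcal B}\wedge t\wedge\sigma_m$, let $m\to\infty$ using Theorem~\ref{thm:non-explosive}, and apply Fatou's lemma. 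This gives $\mathbb E_{n_0}[e^{(\lambda_{\mathrm{gap}}-\varepsilon)\tau_{\mathcal B}}U(X_{\tau_{\mathcal B}})]\le U(n_0)$ and $e^{(\lambda_{\mathrm{gap}}-\varepsilon)t}\mathbb E_{n_0}[\mathbf 1_{\{\tau_{\mathcal B}>t\}}U(X_t)]\le U(n_0)$. The lower bound on $U$ then yields both terms of the lemma.

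The construction of $U$ splits into a part living on $I_0$ and a part controlling total size. As in the stable case, the equilibrium equation $(Az^*+b)_i=0$ for $i\in I_0$ forces $b_i=0$ and $a_{i\ell}=0$ for $\ell\in I_+$. Hence $A$ is block lower triangular with diagonal block $A_{I_0}$, and $\sigma(A_{I_0})\subseteq\sigma(A)$. The key spectral claim is $0\notin\sigma(A_{I_0})$, so that the spectral bound of $A_{I_0}$ is at most $-\lambda_{\mathrm{gap}}$. To prove it, take a nonnegative left Perron vector $u$ of $A_{I_0}$ with eigenvalue $0$. By (C1), the semisimplicity of $0$ in the triangular structure lets us extend $u$ to a left null vector $\alpha$ of $A$ that is supported in a way detected by $n_0$. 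The conservation $\alpha\cdot z(t)=\alpha\cdot n_0$ from Proposition~\ref{prop:critical_affine}, together with $z(t)\to z^*$ and $z^*_{I_0}=0$, should force this mass to remain in $I_+$. I would carry this out using the adapted basis of Supplementary Lemma~S3.2, whose zero modes are carried by strongly connected components disjoint from $I_0$.

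With this in hand, choose $v_0>0$ on $I_0$ with $v_0^{\mathsf T}A_{I_0}\le-(\lambda_{\mathrm{gap}}-\varepsilon/2)v_0^{\mathsf T}$, and set $V_0(n)=v_0^{\mathsf T}n_{I_0}$. Because no zero-order reaction and no reaction with reactant in $I_+$ produces $I_0$ species, $QV_0\le-(\lambda_{\mathrm{gap}}-\varepsilon/2)V_0$ on all of $E$. Also $V_0>0$ exactly on $\mathcal T$.

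The main obstacle is the size part. Unlike the stable case, there is no $\widehat v>0$ with $\widehat v^{\mathsf T}A\le-\delta\widehat v^{\mathsf T}$, since $\lambda_{\max}=0$. The $I_+$ coordinates have no decay and can be fed by $I_0$. First try $V(n)=\widehat v^{\mathsf T}n$ with $\widehat v>0$ and $\widehat v^{\mathsf T}A\le0$; this exists because $0$ is semisimple and dominant (C1), via a strictly positive combination of left null vectors plus a small generalized correction. Then (C2) gives $QV\le \widehat v^{\mathsf T}b=0$. Now set $U=1+V+\zeta V_0$ with $\zeta$ large. On $\mathcal T$ we have $V_0\ge v_{0,\min}>0$, since $n_{I_0}\ne0$ means at least one molecule there. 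The term $-\zeta(\varepsilon/2)V_0$ must then absorb $(\lambda_{\mathrm{gap}}-\varepsilon)(1+V)$, but $V$ is unbounded while $V_0$ may equal $1$, so this fails. The fallback is to drop the exponential weight on $V$ and estimate separately. The plan is:
\begin{itemize}
\item[(a)] Prove $\mathbb E_{n_0}[e^{(\lambda_{\mathrm{gap}}-\varepsilon/2)\tau_{\mathcal B}}]<\infty$ from Theorem~\ref{thm:hitting_critical}, or directly from $V_0$.
\item[(b)] Bound $\sup_t\mathbb E_{n_0}\|X_t\|_1^2$ using $Q(V^2)\le C(1+V)$, which controls polynomial growth: $\mathbb E V(X_t)^2\le C(1+t)$.
\item[(c)] Combine (a) and (b) by Cauchy–Schwarz, $\mathbb E[\mathbf 1_{\{\tau>t\}}\|X_t\|_1]\le \mathbb P(\tau>t)^{1/2}(\mathbb E\|X_t\|_1^2)^{1/2}$. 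The factor of $1/2$ loss in rate is avoided by starting from $\varepsilon$ replaced by $\varepsilon/4$ in a higher moment bound, i.e. using Hölder with exponent close to $1$. For the $\tau_{\mathcal B}$ term, decompose over $\{k\le\tau<k+1\}$ and use $\|X_\tau\|_1\le\sup_{s\le k+1}\|X_s\|_1$ with a maximal inequality.
\end{itemize}
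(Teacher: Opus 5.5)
\textbf{There is a genuine gap.} Your overall architecture is right: a supermartingale $e^{\gamma t}U(X_t)$ on $\mathcal T$, the stopped Dynkin formula, then Fatou. But two structural claims it relies on are false in general, and the fallback is not carried through.

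\emph{First gap: the spectral claim.} $0\notin\sigma(A_{I_0})$ can fail. Take $S_1\rightleftharpoons S_2$, $S_3\to\varnothing$ with $n_0=(0,0,1)$. Then $z^*=0$, $I_0=\{1,2,3\}$, and $A_{I_0}=A$ is singular. Extending a left Perron vector $u$ of $A_{I_0}$ by zero does give an element of $\ker A^{\mathsf T}$. However, conservation together with $z^*_{I_0}=0$ only yields $u\cdot n_0=0$ (the zero mode is unpopulated), not a contradiction. What is true is $s(A_{\mathcal A})\le-\lambda_{\mathrm{gap}}$, where $\mathcal A\subseteq I_0$ is the set of species reachable from $\operatorname{supp}(n_0)$:
\begin{itemize}
\item Nothing outside $I_0$ feeds $I_0$, so species in $I_0\setminus\mathcal A$ are never present.
\item If an irreducible class $C\subseteq\mathcal A$ had $s(A_C)=0$ with Perron vector $u_C>0$, then $u_C^{\mathsf T}z_C(t)$ would be nondecreasing and eventually positive, contradicting $z_C(t)\to0$.
\end{itemize}
So $V_0$ must be built on $\mathcal A$, not on $I_0$.

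\emph{Second gap: the size function.} A strictly positive $\widehat v$ with $\widehat v^{\mathsf T}A\le0$ need not exist under (C1). For $S_1\to S_1+S_2$, $S_2\to\varnothing$ one has $A=\begin{pmatrix}0&0\\1&-1\end{pmatrix}$ and $\widehat v^{\mathsf T}A=(\widehat v_2,-\widehat v_2)$. Moreover, (C2) says nothing about $\widehat v^{\mathsf T}b$, since $\widehat v\notin\ker A^{\mathsf T}$. Only $v>0$ with $v^{\mathsf T}A\le\delta v^{\mathsf T}$ for small $\delta>0$ is available. Consequences:
\begin{itemize}
\item Step (b) as written is unfounded; it yields $e^{2\delta t}$-type growth rather than polynomial growth.
\item Step (c) additionally needs $L^q$ bounds for arbitrarily large $q$, including for $\sup_{s\le k+1}\|X_s\|_1$. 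Step (b) does not supply these, and you only name the maximal inequality.
\item Step (a) via Theorem~\ref{thm:hitting_critical} is fine.
\end{itemize}

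\textbf{The fix.} The fallback is in fact unnecessary. The obstruction you correctly identified for $1+V+\zeta V_0$ disappears if you multiply instead of add. Work on the closed set $\{n:n_{I_0\setminus\mathcal A}=0\}\ni n_0$ and put $U=(K+V)(1+\zeta V_0)$, where
\begin{itemize}
\item $V=v^{\mathsf T}n$ with $v^{\mathsf T}A\le\delta v^{\mathsf T}$ as above;
\item $V_0=v_0^{\mathsf T}n_{\mathcal A}$ with $v_0^{\mathsf T}A_{\mathcal A}\le-(\lambda_{\mathrm{gap}}-\varepsilon/2)v_0^{\mathsf T}$.
\end{itemize}
Only reactions with reactant in $\mathcal A$ change $V_0$, and their total propensity is $O(V_0)$. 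Hence the cross term in $Q(fg)=fQg+gQf+\sum_R f_R\,\Delta_Rf\,\Delta_Rg$ is at most $\zeta C_1V_0$. Use $1\le V_0/v_{0,\min}$ off $\mathcal B$, and write $\gamma=\lambda_{\mathrm{gap}}-\varepsilon$ and $\eta=1/(\zeta v_{0,\min})$. Then on $\mathcal T$,
\begin{equation*}
QU+\gamma U\le\zeta V_0\Bigl[(1+\eta)\bigl(\delta V+v^{\mathsf T}b\bigr)+\Bigl(\eta\gamma-\frac{\varepsilon}{2}\Bigr)(K+V)+C_1\Bigr].
\end{equation*}
This is $\le0$ once $\zeta$ is large (so that $\eta\le1$ and $\eta\gamma\le\varepsilon/8$), $\delta\le\varepsilon/16$, and $K$ is large. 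Since $U\ge K+V\ge c(1+\|n\|_1)$ and $U\ge1$, your stopped-Dynkin paragraph then gives both terms of the lemma, and $\tau_{\mathcal B}<\infty$, directly. No H\"older step, maximal inequality, or appeal to Theorem~\ref{thm:hitting_critical} is needed.
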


\begin{proof}
The proof is given in Supplementary Section~S3.
\end{proof}

We are now in a position to prove Theorem~\ref{thm:critical_regular}.

\begin{proof}[Proof of Theorem~\ref{thm:critical_regular}]
By Proposition~\ref{prop:W1_upper_crit}, every nonempty $\mathcal B_c$ admits a unique stationary distribution $\pi_c$, while Lemma~\ref{lem:weighted_exp_moment_crit} implies that $\tau_{\mathcal B}<\infty$ almost surely. Define
\begin{equation*}
c(X_{\tau_{\mathcal B}})
:=
\bigl(
\alpha^{(1)}\cdot X_{\tau_{\mathcal B}},
\dots,
\alpha^{(m)}\cdot X_{\tau_{\mathcal B}}
\bigr)
\end{equation*}
and
\begin{equation*}
\pi_{n_0}
:=
\mathbb E_{n_0}
\left[
\pi_{c(X_{\tau_{\mathcal B}})}
\right]
=
\sum_{\mathcal B_c\ne\varnothing}
\mathbb P_{n_0}
\left(
c(X_{\tau_{\mathcal B}})=c
\right)
\pi_c.
\end{equation*}
Therefore the mixing coefficients are precisely the probabilities with which the process enters the corresponding closed sets $\mathcal B_c$. Since
\begin{equation*}
L_{c(X_{\tau_{\mathcal B}})}
=
\|(X_{\tau_{\mathcal B}})_Z\|_1
\le
\|X_{\tau_{\mathcal B}}\|_1,
\end{equation*}
it follows from \eqref{eq:stationary_class_moment_crit} that
\begin{equation}\label{eq:mc_bound}
m_{c(X_{\tau_{\mathcal B}})}
\le
C_0
\left(
1+\|X_{\tau_{\mathcal B}}\|_1
\right).
\end{equation}
Together with Lemma~\ref{lem:weighted_exp_moment_crit}, this yields
\begin{equation*}
\sum_{y\in E}
\|y\|_1\pi_{n_0}(y)
=
\mathbb E_{n_0}
\left[
m_{c(X_{\tau_{\mathcal B}})}
\right]
<
\infty.
\end{equation*}

We now use the strong Markov property at the hitting time $\tau_{\mathcal B}$ to combine the classwise estimates. On $\{\tau_{\mathcal B}\le t\}$, condition on $\bigl(\tau_{\mathcal B},X_{\tau_{\mathcal B}},c(X_{\tau_{\mathcal B}})\bigr)$ and couple $P_{t-\tau_{\mathcal B}}^{c(X_{\tau_{\mathcal B}})}(X_{\tau_{\mathcal B}},\cdot)$ with $\pi_{c(X_{\tau_{\mathcal B}})}$. On $\{\tau_{\mathcal B}>t\}$, condition on $c(X_{\tau_{\mathcal B}})$ and couple $X_t$ with an independent sample from $\pi_{c(X_{\tau_{\mathcal B}})}$. This construction is only needed at the level of terminal distributions and need not be adapted to the original filtration. By the coupling characterization of $W_1$ and convexity under mixtures,
\begin{equation*}
\begin{aligned}
W_1
\bigl(
P_t(n_0,\cdot),\pi_{n_0}
\bigr)
\le{}&
\mathbb E_{n_0}
\Bigg[
\mathbf1_{\{\tau_{\mathcal B}\le t\}}
W_1
\left(
P_{t-\tau_{\mathcal B}}^{c(X_{\tau_{\mathcal B}})}
(X_{\tau_{\mathcal B}},\cdot),
\pi_{c(X_{\tau_{\mathcal B}})}
\right)
\Bigg]
\\
&+
\mathbb E_{n_0}
\left[
\mathbf1_{\{\tau_{\mathcal B}>t\}}
\left(
\|X_t\|_1+
m_{c(X_{\tau_{\mathcal B}})}
\right)
\right].
\end{aligned}
\end{equation*}
For the first term, Proposition~\ref{prop:W1_upper_crit} and Lemma~\ref{lem:weighted_exp_moment_crit} give
\begin{equation*}
\begin{aligned}
&\mathbb E_{n_0}
\Bigg[
\mathbf1_{\{\tau_{\mathcal B}\le t\}}
W_1
\left(
P_{t-\tau_{\mathcal B}}^{c(X_{\tau_{\mathcal B}})}
(X_{\tau_{\mathcal B}},\cdot),
\pi_{c(X_{\tau_{\mathcal B}})}
\right)
\Bigg]
\\
&\qquad\le
e^{-(\lambda_{\mathrm{gap}}-\varepsilon)t}
\mathbb E_{n_0}
\left[
\left(
C_1(\varepsilon)\|X_{\tau_{\mathcal B}}\|_1+
C_2(\varepsilon)
\right)
e^{(\lambda_{\mathrm{gap}}-\varepsilon)\tau_{\mathcal B}}
\right]
\\
&\qquad\le
C(n_0,\varepsilon)
e^{-(\lambda_{\mathrm{gap}}-\varepsilon)t}.
\end{aligned}
\end{equation*}
For the second term, \eqref{eq:mc_bound} yields
\begin{equation*}
\begin{aligned}
&
\mathbb E_{n_0}
\left[
\mathbf1_{\{\tau_{\mathcal B}>t\}}
\left(
\|X_t\|_1+
m_{c(X_{\tau_{\mathcal B}})}
\right)
\right]
\\
&\qquad\le
C\,
\mathbb E_{n_0}
\left[
\mathbf1_{\{\tau_{\mathcal B}>t\}}
(1+\|X_t\|_1)
\right]
+
C\,
\mathbb E_{n_0}
\left[
\mathbf1_{\{\tau_{\mathcal B}>t\}}
(1+\|X_{\tau_{\mathcal B}}\|_1)
\right].
\end{aligned}
\end{equation*}
The first expectation is bounded by $C(n_0,\varepsilon)e^{-(\lambda_{\mathrm{gap}}-\varepsilon)t}$ by Lemma~\ref{lem:weighted_exp_moment_crit}. Moreover, on $\{\tau_{\mathcal B}>t\}$,
\begin{equation*}
e^{(\lambda_{\mathrm{gap}}-\varepsilon)t}
\le
e^{(\lambda_{\mathrm{gap}}-\varepsilon)\tau_{\mathcal B}},
\end{equation*}
so that
\begin{equation*}
\begin{aligned}
&
\mathbb E_{n_0}
\left[
\mathbf1_{\{\tau_{\mathcal B}>t\}}
(1+\|X_{\tau_{\mathcal B}}\|_1)
\right]
\\
&\qquad\le
e^{-(\lambda_{\mathrm{gap}}-\varepsilon)t}
\mathbb E_{n_0}
\left[
(1+\|X_{\tau_{\mathcal B}}\|_1)
e^{(\lambda_{\mathrm{gap}}-\varepsilon)\tau_{\mathcal B}}
\right]
\\
&\qquad\le
C(n_0,\varepsilon)
e^{-(\lambda_{\mathrm{gap}}-\varepsilon)t}.
\end{aligned}
\end{equation*}
Combining the above estimates gives
\begin{equation*}
W_1
\bigl(
P_t(n_0,\cdot),\pi_{n_0}
\bigr)
\le
C(n_0,\varepsilon)
e^{-(\lambda_{\mathrm{gap}}-\varepsilon)t},
\qquad t\ge0.
\end{equation*}
Each $\pi_c$ is stationary on the closed set $\mathcal B_c$, so their mixture $\pi_{n_0}$ is stationary. Finally, since the $\ell^1$ distance between two distinct states in $E=\mathbb N^N$ is at least one,
\begin{equation*}
\left\|
P_t(n_0,\cdot)-\pi_{n_0}
\right\|_{\mathrm{TV}}
\le
W_1
\bigl(
P_t(n_0,\cdot),\pi_{n_0}
\bigr).
\end{equation*}
This proves the asserted exponential convergence in both distances.
\end{proof}

\section{Proof of Theorem~\ref{thm:critical_irregular}}\label{sec:critical_irregular_proof}
Throughout this section, assume that $\lambda_{\max}=0$ and that conditions (C1)-(C2) hold. Fix $n_0\in E$ and suppose that the regularity condition is violated for this initial state. We again use the adapted non-negative basis $\{\alpha^{(1)},\dots,\alpha^{(m)}\}$ introduced in Section~\ref{sec:critical_regular_proof}. By the characterization proved in Supplementary Section~S3, failure of regularity is detected by at least one basis vector. Define
\begin{equation*}
K_{\mathrm{irr}}
:=
\left\{
k:
\alpha^{(k)}|_{I_+}
\cdot
(w_R|_{I_+})
\ne0
\text{ for some reaction }R
\text{ of the restricted network}
\right\}.
\end{equation*}
Then $K_{\mathrm{irr}}\ne\varnothing$. After relabeling the basis vectors, we may write $K_{\mathrm{irr}}=\{1,\dots,l\}$ for some $1\le l\le m$, and we refer to $\alpha^{(1)},\dots,\alpha^{(l)}$ as the irregular basis vectors. The key mechanism is that each irregular basis vector generates a non-negative martingale that eventually becomes extinct almost surely, but with a heavy-tailed extinction time. Once all irregular modes have disappeared, the process reduces to a stable or regular critical subsystem. This yields convergence in total variation, whereas conservation of the martingale means prevents convergence in $W_1$ and the heavy extinction tail rules out exponential convergence in total variation.

We first record the martingale property.

\begin{proposition}\label{prop:martingale}
Let $\alpha\in\ker(A^{\mathsf T})$ be non-negative and satisfy $\alpha\cdot b=0$. Then for every initial state $n_0\in E$, $M_t:=\alpha\cdot X_t$ is a non-negative martingale satisfying
\begin{equation*}
\mathbb E_{n_0}M_t
=
M_0
=
\alpha\cdot n_0,
\qquad t\ge0.
\end{equation*}
If $\alpha$ is a conservation law, then $M_t=M_0$ almost surely for every $t\ge0$.
\end{proposition}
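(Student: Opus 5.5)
The plan is to compute the generator of the linear function $h(n):=\alpha\cdot n$, derive a local martingale from Dynkin's formula, and then upgrade it to a true martingale using moment bounds. Linearity of the network gives, for every $n\in E$,
\begin{equation*}
(Qh)(n)=\sum_{R\in\mathcal R}f_R(n)\,\alpha\cdot w_R=\alpha\cdot\Bigl(\sum_{j=1}^{r_0}k_{0j}\nu_{0j}+\sum_{i=1}^N\sum_{j=1}^{r_i}k_{ij}n_i(\nu_{ij}-e_i)\Bigr)=\alpha\cdot(An+b).
\end{equation*}
Since $\alpha^{\mathsf T}A=0$ and $\alpha\cdot b=0$, this vanishes identically. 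Non-negativity of $M_t$ is immediate from $\alpha\ge0$ and $X_t\in\mathbb N^N$.

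Next I will localize. Set $\sigma_m:=\inf\{t\ge0:\|X_t\|_1\ge m\}$. Before $\sigma_m$ the process lives in a finite set, so Dynkin's formula applies to $h$ and shows that $M_{t\wedge\sigma_m}$ is a martingale. Theorem~\ref{thm:non-explosive} gives $\sigma_m\uparrow\infty$ almost surely, hence $M_t$ is a non-negative local martingale. Fatou's lemma then makes it a supermartingale with $\mathbb E_{n_0}M_t\le\alpha\cdot n_0$.

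The main obstacle is upgrading to a true martingale, i.e.\ excluding mass loss in the limit $m\to\infty$. I plan to prove a uniform-in-$m$ bound on $\mathbb E_{n_0}[\sup_{s\le t}\|X_{s\wedge\sigma_m}\|_1]$, or at least on $\mathbb E_{n_0}\|X_{t\wedge\sigma_m}\|_1^2$, for each fixed $t$. For this I will apply Dynkin's formula to $g(n):=1+\|n\|_1$ and to $g^2$. Both satisfy $Qg\le C g$ and $Qg^2\le Cg^2$, because the jump rates grow affinely and the jump sizes are bounded. Gronwall's inequality then gives $\mathbb E_{n_0}g(X_{t\wedge\sigma_m})^2\le g(n_0)^2e^{Ct}$ uniformly in $m$. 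This makes $\{M_{t\wedge\sigma_m}\}_m$ uniformly integrable, so letting $m\to\infty$ yields $\mathbb E[M_t\mid\mathcal F_s]=M_s$. An equivalent route is to use the mean equation \eqref{eq:mean}: its derivation needs exactly this moment control, and it gives $\frac{d}{dt}\mathbb E\,\alpha\cdot X_t=\alpha\cdot(A\,\mathbb E X_t+b)=0$. The Markov property extends the unconditional identity to conditional expectations, and in particular $\mathbb E_{n_0}M_t=\alpha\cdot n_0$.

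Finally, suppose $\alpha$ is a conservation law. Then $\alpha\cdot w_R=0$ for every reaction, so each jump of $X_t$ leaves $\alpha\cdot X_t$ unchanged. Since non-explosivity means only finitely many jumps occur on $[0,t]$ almost surely, $M_t=M_0$ almost surely.
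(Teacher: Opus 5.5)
Your proposal is correct and follows the paper's proof essentially step for step: the generator identity $Q(\alpha\cdot n)=\alpha\cdot(An+b)=0$, localization to a non-negative local martingale and hence a supermartingale, an upgrade to a true martingale, and invariance of $\alpha\cdot X_t$ under every jump when $\alpha$ is a conservation law. The only difference is in the upgrade: the paper invokes the mean equation \eqref{eq:mean} to get $\mathbb E_{n_0}M_t=\alpha\cdot n_0$ and then uses that a supermartingale with constant expectation is a martingale. Your Gronwall bound on $\mathbb E_{n_0}(1+\|X_{t\wedge\sigma_m}\|_1)^2$, uniform in $m$, gives uniform integrability directly, and it also supplies the moment control that the formal derivation of \eqref{eq:mean} tacitly requires.
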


\begin{proof}
Since $\alpha\in\ker(A^{\mathsf T})$ and $\alpha\cdot b=0$,
\begin{equation*}
Q(\alpha\cdot n)
=
\alpha\cdot(An+b)
=
0,
\qquad n\in E.
\end{equation*}
Hence $M_t$ is a local martingale \cite{norris1998markov}. Since $M_t\ge0$, it is also a supermartingale. On the other hand, the mean equation gives
\begin{equation*}
\begin{aligned}
\mathbb E_{n_0}M_t
&=
\alpha^{\mathsf T}\mathbb E_{n_0}X_t
\\
&=
\alpha^{\mathsf T}n_0
+
\int_0^t
\alpha^{\mathsf T}
\left(
A\mathbb E_{n_0}X_s+b
\right)
\,\mathrm ds
=
\alpha^{\mathsf T}n_0
=
M_0.
\end{aligned}
\end{equation*}
Thus the non-negative supermartingale $M_t$ has constant expectation and is therefore a martingale. If $\alpha$ is a conservation law, then $\alpha\cdot w_R=0$ for every reaction $R$, so every jump leaves $\alpha\cdot X_t$ unchanged and hence $M_t=M_0$ almost surely for all $t\ge0$.
\end{proof}

For the irregular basis vectors, define their joint extinction time by
\begin{equation*}
\tau_{\mathrm{irr}}
:=
\inf
\left\{
t\ge0:
\alpha^{(i)}\cdot X_t=0
\text{ for all }
i=1,\dots,l
\right\}.
\end{equation*}

\begin{proposition}\label{prop:joint_irregular_extinction}
For every $i=1,\dots,l$, we have $\alpha^{(i)}\cdot n_0>0$. Moreover, $\tau_{\mathrm{irr}}<\infty$ almost surely, and there exists a constant $c_0>0$ such that
\begin{equation}\label{eq:tau_irr}
\mathbb P_{n_0}
(\tau_{\mathrm{irr}}>t)
\ge
\frac{c_0}{1+t},
\qquad t\ge0.
\end{equation}
Consequently, $\mathbb E_{n_0}\tau_{\mathrm{irr}}=\infty$, and $\tau_{\mathrm{irr}}$ admits no finite positive exponential moment, i.e.
\[
\mathbb E_{n_0}
\left[
e^{\lambda\tau_{\mathrm{irr}}}
\right]
=
\infty
\qquad
\text{for every } \lambda>0.
\]
\end{proposition}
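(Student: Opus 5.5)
We prove the three claims in turn: positivity of the initial values, almost-sure extinction, and the heavy tail \eqref{eq:tau_irr}. For each irregular index $i\le l$, Proposition~\ref{prop:martingale} applies to $\alpha^{(i)}$, since (C2) gives $\alpha^{(i)}\cdot b=0$. So $M^{(i)}_t:=\alpha^{(i)}\cdot X_t$ is a non-negative martingale with mean $\alpha^{(i)}\cdot n_0$.

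\textbf{Positivity.} Suppose $\alpha^{(i)}\cdot n_0=0$. A non-negative martingale started at $0$ is identically $0$, so $\alpha^{(i)}\cdot X_t=0$ for all $t$. The deterministic mean is $z(t)=\mathbb E X_t$, so the limit satisfies $z^*_j=0$ for all $j$ with $\alpha^{(i)}_j>0$. By the adapted-basis structure (Supplementary Lemma~S3.2), the positive support of $\alpha^{(i)}$ on $I_+$ would then be empty. Hence $\alpha^{(i)}|_{I_+}=0$, and $\alpha^{(i)}$ cannot detect irregularity, contradicting $i\in K_{\mathrm{irr}}$. Therefore $\alpha^{(i)}\cdot n_0>0$.

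\textbf{Almost-sure extinction.} By Theorem~\ref{thm:hitting_critical}, $\tau_{\mathcal B}<\infty$ a.s., and after $\tau_{\mathcal B}$ only restricted reactions act. Each $M^{(i)}$ converges a.s. by the martingale convergence theorem, and it is integer-valued up to the scaling of $\alpha^{(i)}$, whose jumps take finitely many values. Hence $M^{(i)}$ is eventually constant a.s. Irregularity provides a restricted reaction $R$ with $\alpha^{(i)}\cdot w_R\neq0$, and its reactant lies in $I_+$ (or it is zero-order). Whenever $M^{(i)}_t>0$ in $\mathcal B$, such a reaction remains available at a rate bounded below. I will justify this using the adapted basis: the support of $\alpha^{(i)}$ in $I_+$ is a source block from which the irregular reaction fires. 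By a conditional Borel--Cantelli argument on the embedded chain, $M^{(i)}$ cannot stay at a positive constant forever, so its a.s. limit must be $0$. Since $0$ is absorbing for a non-negative martingale, each $M^{(i)}$ is absorbed at $0$ at a finite time, and $\tau_{\mathrm{irr}}$, the maximum of these times, is finite a.s.

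\textbf{Heavy tail.} For large $t$, optional stopping at $\tau_{\mathrm{irr}}\wedge t$ gives
\[\alpha^{(1)}\cdot n_0=\mathbb E_{n_0}\bigl[M^{(1)}_t\mathbf 1_{\{\tau_{\mathrm{irr}}>t\}}\bigr]\le \mathbb P(\tau_{\mathrm{irr}}>t)^{1/2}\bigl(\mathbb E (M^{(1)}_t)^2\bigr)^{1/2}.\]
Here I used that $M^{(1)}_t=0$ on $\{\tau_{\mathrm{irr}}\le t\}$, because $0$ is absorbing. It remains to show $\mathbb E(M^{(1)}_t)^2\le C(1+t)$. The predictable quadratic variation has rate $\sum_R f_R(X_s)(\alpha^{(1)}\cdot w_R)^2$. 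The reactions with $\alpha^{(1)}\cdot w_R\neq0$ should have reactants in the support region of $\alpha^{(1)}$, or in $I_0$ where mass decays exponentially. Their propensities are then bounded by $C(M^{(1)}_s+\|X_s|_{I_0}\|_1)$, whose expectation is bounded uniformly in $s$ by the martingale property and the exponential decay from Lemma~\ref{lem:weighted_exp_moment_crit}. Integrating gives $\mathbb E(M^{(1)}_t)^2\le (M_0)^2+Ct$, and hence $\mathbb P(\tau_{\mathrm{irr}}>t)\ge c/(1+t)$. Since $\int_0^\infty(1+t)^{-1}\,\mathrm dt=\infty$, $\mathbb E\tau_{\mathrm{irr}}=\infty$. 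Every exponential moment is then infinite, because $e^{\lambda s}\ge \lambda s$.

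The main obstacle is the propensity bound in the quadratic-variation estimate. It requires, from the Supplementary structural lemmas, that every reaction changing $\alpha^{(1)}\cdot X$ has its reactant weighted by $\alpha^{(1)}$ or lying in $I_0$. If this fails, I would instead use the linear moment bound $\sup_s\mathbb E\|X_s\|_1<\infty$ along the critical-case mean dynamics, which holds under (C1)--(C2) because $z(t)$ converges. That bound alone gives the same linear growth of the second moment.
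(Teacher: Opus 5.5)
Your positivity argument and your tail estimate are essentially right, but the almost-sure extinction step has a genuine gap. The claim that whenever $M^{(i)}_t>0$ in $\mathcal B$ a reaction with $\alpha^{(i)}\cdot w_R\neq0$ ``remains available at a rate bounded below'' is false in general. In $\mathcal B$, $M^{(i)}_t>0$ only says that some molecule sits somewhere in the source block $Z_k=\operatorname{supp}\alpha^{(i)}\cap I_+$. The irregular reaction, however, has a specific reactant $j\in Z_k$. If all source molecules currently sit at other species, and no reaction from those species changes $M^{(i)}$, then every $M^{(i)}$-changing rate is zero.

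A conditional Borel--Cantelli argument on the embedded chain does not by itself give the needed bound either. The probability that the next jump is a source reaction is diluted by the downstream propensities, which are unbounded. Two ingredients are missing: (a) autonomy of the source configuration, and (b) strong connectivity of the source block.

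For (a): if $\alpha_\ell=0$, then $0=(\alpha^{\mathsf T}A)_\ell=\sum_j k_{\ell j}\,\alpha\cdot\nu_{\ell j}$, a sum of non-negative terms. Likewise (C2) gives $\alpha\cdot\nu_{0j}=0$ for every zero-order reaction. So no zero-order reaction and no reaction with reactant outside $\operatorname{supp}\alpha$ changes $\alpha\cdot X$ or produces any species in $\operatorname{supp}\alpha$. In particular, the irregular reactant $j$ lies in $Z_k$. Moreover, $Y_t:=X_t|_{\operatorname{supp}\alpha}$ is an autonomous Markov chain whose level sets $F_c=\{y:\alpha\cdot y=c\}$ are finite.

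For (b): $Z_k$ is a zero-eigenvalue strongly connected component (Supplementary Section~S3). So from any nonzero configuration in $\mathcal B$, some finite sequence of positive-rate reactions moves a molecule along a path to $j$ and then fires the irregular reaction. $M^{(i)}$ must change somewhere along this sequence. Hence, for $c>0$, $\delta_c:=\min\{\mathbb P_y(Y\text{ leaves }F_c\text{ before time }1): y\in F_c,\ y\text{ vanishes on }I_0\}>0$.

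Apply the Markov property at times $T,T+1,\dots$. On $\{\tau_{\mathcal B}\le T,\ M^{(i)}_T=c\}$, $M^{(i)}$ stays equal to $c$ on $[T,T+n]$ with conditional probability at most $(1-\delta_c)^n$. A union over rational $T$ and the countably many attainable $c$ rules out freezing at a positive level. Note that this uses that $\operatorname{supp}\alpha^{(i)}\cap I_+$ is a single block. If it also contained a regular block, $M^{(i)}$ could freeze at a positive value.

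The rest needs only minor adjustments. The positivity argument does not need Lemma~S3.2: if $\alpha^{(i)}\cdot n_0=0$, then $X^j_t\equiv0$, hence $z_j\equiv0$, for every $j\in\operatorname{supp}\alpha^{(i)}$. So the support lies in $I_0$.

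For eventual constancy, the valid reason is your second one: nonzero jumps of $M^{(i)}$ have size bounded away from zero. Integer-valuedness can fail when $\alpha^{(i)}$ has incommensurable entries on $I_0$.

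The ``main obstacle'' you flag in the tail estimate is not one. By (a), every reaction with $\alpha\cdot w_R\neq0$ has its reactant in $\operatorname{supp}\alpha$. So the rate $\sum_R f_R(X_s)(\alpha\cdot w_R)^2$ is at most $CM_s$. Localization plus Fatou then give $\mathbb E M_t^2\le M_0^2+CM_0t$. Your fallback via $\sup_s\mathbb E\|X_s\|_1=\sup_s\|z(s)\|_1<\infty$ also works. Either way, $\mathbb P(\tau_{\mathrm{irr}}>t)\ge\mathbb P(M_t>0)\ge M_0^2/(M_0^2+Ct)$ for all $t\ge0$. This yields the $c_0/(1+t)$ bound and its consequences.
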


\begin{proof}
The proof is given in Supplementary Section~S3.
\end{proof}

Thus all irregular modes disappear in finite time almost surely, although their joint extinction time has a polynomially decaying tail. To describe the dynamics after extinction, set
\begin{equation*}
U_{\mathrm{irr}}
:=
\bigcup_{i=1}^l
\operatorname{supp}\alpha^{(i)},
\qquad
E_0
:=
\left\{
n\in E:
\alpha^{(i)}\cdot n=0,
\quad
i=1,\dots,l
\right\},
\end{equation*}
and define $J:=I_+\setminus U_{\mathrm{irr}}$. Since each $\alpha^{(i)}$ is strictly positive on its support,
\begin{equation*}
E_0
=
\left\{
n\in E:
n_{U_{\mathrm{irr}}}=0
\right\}.
\end{equation*}
Hence, on $\mathcal B\cap E_0$, all species in $I_0\cup U_{\mathrm{irr}}$ are absent, and only the species in $J$ remain dynamically relevant.

\begin{proposition}\label{prop:irregular_reduced_dynamics}
The set $\mathcal B\cap E_0$ is closed. The projection of the process on $\mathcal B\cap E_0$ onto the species in $J$ evolves autonomously as a linear reaction network. The reduced network is either stable or satisfies conditions (C1)-(C2) in the critical regime; in the latter case, every left null vector of its first-order matrix is a conservation law.

For $c_{\mathrm{irr}}=(c_{l+1},\dots,c_m)$, define
\begin{equation*}
\mathcal B'_{c_{\mathrm{irr}}}
:=
\left\{
n\in\mathcal B\cap E_0:
\alpha^{(k)}\cdot n=c_k,
\quad
k=l+1,\dots,m
\right\},
\end{equation*}
with the empty label understood when $l=m$. Every nonempty $\mathcal B'_{c_{\mathrm{irr}}}$ is a closed set and admits a unique stationary distribution $\pi_{c_{\mathrm{irr}}}$. Moreover, for every $\varepsilon\in(0,\lambda_{\mathrm{gap}})$ and $x\in\mathcal B'_{c_{\mathrm{irr}}}$, there exists $C(x,\varepsilon)<\infty$ such that
\begin{equation*}
\left\|
P_t(x,\cdot)
-
\pi_{c_{\mathrm{irr}}}
\right\|_{\mathrm{TV}}
\le
W_1
\bigl(
P_t(x,\cdot),
\pi_{c_{\mathrm{irr}}}
\bigr)
\le
C(x,\varepsilon)
e^{-(\lambda_{\mathrm{gap}}-\varepsilon)t},
\qquad t\ge0.
\end{equation*}
There also exists a constant $C<\infty$, independent of $c_{\mathrm{irr}}$, such that
\begin{equation}\label{eq:irregular_stationary_moment}
\sum_{n\in E}
\|n\|_1
\pi_{c_{\mathrm{irr}}}(n)
\le
C
\left(
1+
\sum_{k=l+1}^m c_k
\right).
\end{equation}
\end{proposition}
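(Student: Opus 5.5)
The plan is to reduce everything to results already proved: Theorem~\ref{thm:stable_case} for a stable reduced network, and Theorem~\ref{thm:critical_regular} together with Corollary~\ref{cor:fully_conservative} for a critical one.

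\textbf{Closedness and autonomous reduced dynamics.} By Theorem~\ref{thm:hitting_critical}, $\mathcal B$ is closed. Next we show that $E_0$ is closed inside $\mathcal B$. Fix $n\in\mathcal B\cap E_0$. Any reaction with positive propensity at $n$ is either zero-order or first-order with reactant in $J$. For each $i\le l$ we have $Q(\alpha^{(i)}\cdot n)=\alpha^{(i)}\cdot(An+b)=0$. Since $\alpha^{(i)}\cdot n=0$, the value $\alpha^{(i)}\cdot X$ cannot decrease below zero. Hence every jump with positive rate has $\alpha^{(i)}\cdot w_R\ge 0$, and these increments sum to zero when weighted by the rates. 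So each such jump leaves $\alpha^{(i)}\cdot X=0$, which gives closedness.

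On $\mathcal B\cap E_0$ only species in $J$ are present. The active reactions are exactly the zero-order ones and the first-order ones with reactant in $J$, and by closedness their products lie in $J$. The $J$-coordinates therefore form the linear network restricted to $J$, in the sense of Definition~\ref{def:restriction_subnetwork}. Its first-order matrix is the diagonal block $A_J$. Ordering the species as $I_0$, $U_{\mathrm{irr}}$, $J$ (with $U_{\mathrm{irr}}\setminus I_+$ absorbed into $I_0$), closedness of $\mathcal B\cap E_0$ makes $A$ block lower-triangular with $A_J$ in the bottom-right corner. Its zero-order vector is $b_J=b$, since $b$ vanishes off $J$ by closedness. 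Consequently $\sigma(A_J)\subseteq\sigma(A)$, so either $\lambda_{\max}(A_J)<0$ or $\lambda_{\max}(A_J)=0$ with the zero eigenvalue semisimple. Semisimplicity passes to the diagonal block of a block-triangular matrix with semisimple zero eigenvalue: a Jordan chain for $A_J$ lifts to one for $A$ through the zero upper-right blocks.

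\textbf{Conditions (C1)--(C2) and conservation laws for the reduced network; the main obstacle.} The expected difficulty is identifying $\ker(A_J^{\mathsf T})$. The plan is to use the structural results of Supplementary Section~S3 for the adapted basis, namely that zero modes correspond to activated zero-eigenvalue strongly connected components. With these, every left null vector of $A_J$ extends by zero to a left null vector of $A$ (again using the block-triangular form). It is then a combination of the $\alpha^{(k)}$ with $k>l$ restricted to $J$, since the irregular vectors vanish on $J$. Orthogonality to $b$ follows from (C2). For $k>l$ we have $k\notin K_{\mathrm{irr}}$, so $\alpha^{(k)}|_{I_+}\cdot w_R|_{I_+}=0$ for all restricted reactions. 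The support of $\alpha^{(k)}$ restricted to $J$ then makes it a conservation law of the $J$-network, because the reactions of that network are restricted reactions whose $w_R$ vanishes off $J$. If this identification fails directly, the fallback is to argue via the source-block decomposition of Section~\ref{sec:critical_regular_proof}.

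\textbf{Convergence and moment bounds.} In the stable case, Theorem~\ref{thm:stable_case} applies on the unique closed class of the $J$-network. Since $\lambda_{\max}(A_J)\le-\lambda_{\mathrm{gap}}$, the exponential rate is at least $\lambda_{\mathrm{gap}}-\varepsilon$. The label $c_{\mathrm{irr}}$ is then trivial or determined, because $\alpha^{(k)}$ vanishes on $J$ for the relevant $k$.

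In the critical case, Corollary~\ref{cor:fully_conservative} applies. In fact, Proposition~\ref{prop:W1_upper_crit} applied to the $J$-network gives a unique $\pi_{c_{\mathrm{irr}}}$ on each nonempty $\mathcal B'_{c_{\mathrm{irr}}}$, because these sets are precisely its classes $\mathcal B_c$. The same proposition gives the $W_1$ bound at rate $\lambda_{\mathrm{gap}}(A_J)-\varepsilon\ge\lambda_{\mathrm{gap}}-\varepsilon$, the TV bound via \eqref{eq:TV_vs_W1}, and the uniform moment bound \eqref{eq:stationary_class_moment_crit}. That bound reads $C_0(1+L_c)$ with $L_c\le C\sum_{k>l}c_k$ after normalization, which yields \eqref{eq:irregular_stationary_moment}. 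In the stable case the stationary mean is a fixed finite constant, by Proposition~\ref{prop:lyapunov_moments}.
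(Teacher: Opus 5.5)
Most of your skeleton is sound. The closedness argument works: at a state with $\alpha^{(i)}\cdot n=0$, every rate-weighted increment $\alpha^{(i)}\cdot w_R$ is non-negative and they sum to $\alpha^{(i)}\cdot(An+b)=0$. The block form with $A_{J^c,J}=0$, the inheritance of semisimplicity via right Jordan chains, the conservation-law check for $k>l$, and the reduction to Theorems~\ref{thm:stable_case} and~\ref{thm:critical_regular} also work.

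\textbf{First gap: the lifting step.} The step you flag as the main obstacle is argued incorrectly. The zero block lies \emph{above} $A_J$, so zero extension preserves right null vectors of $A_J$ and left null vectors of $A_{J^c}$, but not left null vectors of $A_J$. If $\beta\in\ker(A_J^{\mathsf T})$ and $\tilde\beta$ is its zero extension, then $\tilde\beta^{\mathsf T}A=(\beta^{\mathsf T}A_{J,J^c},0)$, which need not vanish.

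For example, take $S_0\xrightarrow{r}S_3$, $S_3\rightleftharpoons S_4$, and an independent critical branching species ($S_1\to2S_1$, $S_1\to\varnothing$ at equal rates), started with $S_0,S_1,S_3$ present. Then (C1)--(C2) hold, $I_0=\{S_0\}$, $\alpha^{(1)}=e_1$ is irregular, $\alpha^{(2)}=e_0+e_3+e_4$, $J=\{S_3,S_4\}$, and $\ker(A_J^{\mathsf T})$ is spanned by $(1,1)$. Yet $e_3+e_4\notin\ker(A^{\mathsf T})$, since it fails in the $S_0$ column, and every lift (e.g.\ $\alpha^{(2)}$) charges $I_0$.

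Both (C2) for the reduced network and the conservation-law claim rest on this lifting, so replace the step by a dimension count:
\begin{itemize}
\item $\alpha\mapsto\alpha|_J$ maps $\ker(A^{\mathsf T})$ into $\ker(A_J^{\mathsf T})$, because $A_{J^c,J}=0$.
\item Its kernel is the zero extension of $\ker(A_{J^c}^{\mathsf T})$.
\item Zero is semisimple for both diagonal blocks; left Jordan chains of $A_{J^c}$ do extend by zero.
\item Hence $\dim\ker(A^{\mathsf T})$, the algebraic multiplicity of $0$ in $A$, equals $\dim\ker(A_{J^c}^{\mathsf T})+\dim\ker(A_J^{\mathsf T})$, and the map is onto.
\end{itemize}
Then every left null vector of $A_J$ is a combination of the $\alpha^{(k)}|_J$ with $k>l$ and is orthogonal to $b_J$, and your conservation-law argument goes through.

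\textbf{Second gap: identifying $\mathcal B'_{c_{\mathrm{irr}}}$ with the reduced classes.} The sets $\mathcal B'_{c_{\mathrm{irr}}}$ are not in general the classes $\mathcal B_c$ of the reduced network, because they can contain states that are transient for it. Take critical branching $S_1$, add $S_1\to S_1+S_2$ and $S_2\to\varnothing$, and add an independent occupied pair $S_3\rightleftharpoons S_4$. Then $S_2\in J$, but $S_2$ is unfed in the reduced network, whose absorbing set is $\{n_2=0\}$. Meanwhile $\mathcal B'_{c_{\mathrm{irr}}}=\{n_1=0,\ n_3+n_4=c\}$ allows any $n_2$. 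So Proposition~\ref{prop:W1_upper_crit}, which starts from points of $\mathcal B_c$, gives neither the estimate for every $x\in\mathcal B'_{c_{\mathrm{irr}}}$ nor uniqueness of the stationary distribution there.

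To repair this:
\begin{itemize}
\item Apply Theorem~\ref{thm:critical_regular} (via Corollary~\ref{cor:fully_conservative}) from an arbitrary $x$. Labels are conserved from time zero, so the limit is a single $\pi_c$.
\item Show that this limit depends on $x$ only through $c_{\mathrm{irr}}$. With $0$ semisimple for $A_J$ and (C2), the reduced deterministic limits from $x$ and $x'$ differ by the spectral projection of $x-x'$ onto $\ker A_J$. This vanishes when $x-x'\perp\ker(A_J^{\mathsf T})$, and equal labels give this exactly because the $\alpha^{(k)}|_J$, $k>l$, span $\ker(A_J^{\mathsf T})$, i.e.\ by the corrected lifting.
\item Uniqueness then follows, since any stationary law $\mu$ on $\mathcal B'_{c_{\mathrm{irr}}}$ satisfies $\mu=P_t(\mu,\cdot)\to\pi_c$.
\end{itemize}

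\textbf{Minor point: uniform constants.} The constant in \eqref{eq:stationary_class_moment_crit} is uniform only for a fixed pattern of activated source blocks, and this pattern varies with $c_{\mathrm{irr}}$. There are finitely many patterns, so take the maximum.
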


\begin{proof}
The proof is given in Supplementary Section~S3.
\end{proof}

We are now in a position to prove Theorem~\ref{thm:critical_irregular}.

\begin{proof}[Proof of Theorem~\ref{thm:critical_irregular}]
Let $\tau_{\mathrm{irr}}$ be defined as above and set $\tau:=\tau_{\mathrm{irr}}\vee\tau_{\mathcal B}$. By Theorem~\ref{thm:hitting_critical} and Proposition~\ref{prop:joint_irregular_extinction}, $\tau<\infty$ almost surely. Since both $\mathcal B$ and $E_0$ are closed, $X_t\in\mathcal B\cap E_0$ for every $t\ge\tau$. Define
\begin{equation*}
c_{\mathrm{irr}}(X_\tau)
:=
\bigl(
\alpha^{(l+1)}\cdot X_\tau,
\dots,
\alpha^{(m)}\cdot X_\tau
\bigr),
\end{equation*}
with the empty label understood when $l=m$. Then $X_\tau\in\mathcal B'_{c_{\mathrm{irr}}(X_\tau)}$, and Proposition~\ref{prop:irregular_reduced_dynamics} implies that the process remains in this closed set thereafter. Define
\begin{equation*}
\pi_{n_0}
:=
\sum_{c_{\mathrm{irr}}}
\mathbb P_{n_0}
\left(
c_{\mathrm{irr}}(X_\tau)
=
c_{\mathrm{irr}}
\right)
\pi_{c_{\mathrm{irr}}},
\end{equation*}
where the sum extends over the values attained by $c_{\mathrm{irr}}(X_\tau)$. Since every $\mathcal B'_{c_{\mathrm{irr}}}$ is closed, each $\pi_{c_{\mathrm{irr}}}$ is stationary for the original process. Hence $\pi_{n_0}$ is stationary and supported on $\mathcal B\cap E_0$.

We first prove convergence in total variation. Conditioning on $\bigl(\tau,X_\tau,c_{\mathrm{irr}}(X_\tau)\bigr)$ and using the strong Markov property together with convexity of total variation under mixtures give
\begin{equation*}
\begin{aligned}
\left\|
P_t(n_0,\cdot)-\pi_{n_0}
\right\|_{\mathrm{TV}}
\le
\mathbb E_{n_0}
\Bigg[
&
\mathbf1_{\{\tau\le t\}}
\left\|
P_{t-\tau}(X_\tau,\cdot)
-
\pi_{c_{\mathrm{irr}}(X_\tau)}
\right\|_{\mathrm{TV}}
+
\mathbf1_{\{\tau>t\}}
\Bigg].
\end{aligned}
\end{equation*}
Since $\tau<\infty$ almost surely, Proposition~\ref{prop:irregular_reduced_dynamics} implies
\begin{equation*}
\left\|
P_{t-\tau}(X_\tau,\cdot)
-
\pi_{c_{\mathrm{irr}}(X_\tau)}
\right\|_{\mathrm{TV}}
\longrightarrow0
\qquad
\text{as }t\to\infty
\end{equation*}
for almost every sample path. The integrand is bounded by one, so dominated convergence gives
\begin{equation*}
\left\|
P_t(n_0,\cdot)-\pi_{n_0}
\right\|_{\mathrm{TV}}
\longrightarrow0,
\qquad
t\to\infty.
\end{equation*}

It remains to show that convergence fails in $W_1$ and that the total variation convergence cannot be exponential. The mean equation gives $\mathbb E_{n_0}\|X_t\|_1<\infty$ for every finite $t$. We first verify that $\pi_{n_0}$ also has a finite first moment. By \eqref{eq:irregular_stationary_moment},
\begin{equation*}
\begin{aligned}
\sum_{n\in E}
\|n\|_1
\pi_{n_0}(n)
&=
\mathbb E_{n_0}
\left[
\sum_{n\in E}
\|n\|_1
\pi_{c_{\mathrm{irr}}(X_\tau)}(n)
\right]
\\
&\le
C
\left(
1+
\sum_{k=l+1}^m
\mathbb E_{n_0}
\left[
\alpha^{(k)}\cdot X_\tau
\right]
\right).
\end{aligned}
\end{equation*}
For every $k=l+1,\dots,m$, Proposition~\ref{prop:martingale} shows that $\alpha^{(k)}\cdot X_t$ is a non-negative martingale. Applying the optional sampling theorem at $\tau\wedge r$ gives
\begin{equation*}
\mathbb E_{n_0}
\left[
\alpha^{(k)}\cdot X_{\tau\wedge r}
\right]
=
\alpha^{(k)}\cdot n_0.
\end{equation*}
Since $\tau<\infty$ almost surely, Fatou's lemma yields
\begin{equation*}
\mathbb E_{n_0}
\left[
\alpha^{(k)}\cdot X_\tau
\right]
\le
\alpha^{(k)}\cdot n_0.
\end{equation*}
Consequently,
\begin{equation*}
\sum_{n\in E}
\|n\|_1
\pi_{n_0}(n)
\le
C
\left(
1+
\sum_{k=l+1}^m
\alpha^{(k)}\cdot n_0
\right)
<
\infty,
\end{equation*}
so $W_1(P_t(n_0,\cdot),\pi_{n_0})$ is well defined.

Fix $\alpha:=\alpha^{(1)}$. By Proposition~\ref{prop:joint_irregular_extinction}, $\alpha\cdot n_0>0$. Since $\pi_{n_0}$ is supported on $E_0$, $\alpha\cdot y=0$ for $\pi_{n_0}$-almost every $y$. Therefore, by Kantorovich-Rubinstein duality applied to the $1$-Lipschitz function $f(n):=\alpha\cdot n/\|\alpha\|_\infty$,
\begin{equation*}
\begin{aligned}
W_1
\bigl(
P_t(n_0,\cdot),\pi_{n_0}
\bigr)
&\ge
\frac{
\left|
\mathbb E_{n_0}
[\alpha\cdot X_t]
-
\mathbb E_{\pi_{n_0}}
[\alpha\cdot X]
\right|
}{
\|\alpha\|_\infty
}
\\
&=
\frac{
\alpha\cdot n_0
}{
\|\alpha\|_\infty
}
>
0,
\qquad t\ge0,
\end{aligned}
\end{equation*}
where we used Proposition~\ref{prop:martingale} in the last equality. Hence convergence does not hold in $W_1$.

Finally, since $E_0$ is closed and
\begin{equation*}
\tau_{\mathrm{irr}}
=
\inf\{t\ge0:X_t\in E_0\},
\end{equation*}
we have
\begin{equation*}
P_t
\bigl(
n_0,E\setminus E_0
\bigr)
=
\mathbb P_{n_0}
(\tau_{\mathrm{irr}}>t),
\end{equation*}
whereas $\pi_{n_0}(E\setminus E_0)=0$. Therefore, by \eqref{eq:tau_irr},
\begin{equation*}
\begin{aligned}
\left\|
P_t(n_0,\cdot)-\pi_{n_0}
\right\|_{\mathrm{TV}}
&\ge
P_t
\bigl(
n_0,E\setminus E_0
\bigr)
\\
&=
\mathbb P_{n_0}
(\tau_{\mathrm{irr}}>t)
\ge
\frac{c_0}{1+t},
\qquad t\ge0.
\end{aligned}
\end{equation*}
Hence the convergence in total variation cannot be exponential. Together with the failure of $W_1$ convergence proved above, this completes the proof.
\end{proof}

\section{Conclusions and discussion}\label{sec:conclusion}
In the present paper, we characterize the long-time behavior of reducible stochastic linear reaction networks under both the $L^1$-Wasserstein and total variation distances, without requiring irreducibility or any additional structural assumptions. Our results show that the asymptotic behavior of the stochastic dynamics is governed by three structural features of the underlying reaction network: (i) the spectrum of the first-order influx matrix $A$, (ii) the position of the zero-order influx vector $b$ relative to the left nullspace of $A$, and (iii) whether every left null vector of $A$ corresponds to a conservation law when the reaction network is restricted to the persistent species. Based on these features, we identify the conditions under which the time-dependent distribution converges exponentially fast to a stationary limiting distribution, converges only at a non-exponential rate, or fails to exhibit positive recurrence in the interior of the state space. Furthermore, we prove that the stochastic dynamics of every linear reaction network is non-explosive.

In the stable regime $\lambda_{\max}<0$, the deterministic and stochastic dynamics exhibit closely parallel long-time behaviors. The deterministic system converges exponentially fast to its globally attracting equilibrium, whereas the stochastic system enters an absorbing set in finite time. This absorbing set is the unique closed irreducible positive recurrent class and supports the unique stationary distribution of the system. By constructing suitable couplings, we prove that the stochastic process converges exponentially fast to its stationary distribution under both the $L^1$-Wasserstein and total variation distances, with the optimal exponential convergence rate given by $-\lambda_{\max}$. Therefore, the dominant spectral mode of the deterministic system also determines the asymptotic relaxation rate of the stochastic dynamics.

The critical regime $\lambda_{\max}=0$ exhibits a substantially richer structure. When the zero eigenvalue of $A$ is semisimple and $b$ is orthogonal to the left nullspace of $A$, the deterministic system converges exponentially fast to an equilibrium that may depend on the initial condition. In contrast, for the stochastic system, these two spectral conditions alone are not sufficient to ensure exponential convergence. An additional decisive requirement is the regularity condition introduced in Definition~\ref{regular}, which requires that the left null vectors of $A$ correspond to conservation laws of the reaction network restricted to the persistent species. When the regularity condition is satisfied, the absorbing set decomposes
into multiple closed classes, each of which supports a stationary distribution. By constructing suitable couplings, we prove that the stochastic process converges exponentially fast to an initial-state-dependent mixture of these stationary distributions.
When the regularity condition holds for every initial state, the spectral gap $\lambda_{\rm gap}$ is the optimal uniform exponential convergence rate. If regularity fails for some initial state, exponential convergence fails for that state.

When the regularity condition is violated, the distinction between the total variation and $L^1$-Wasserstein distances becomes essential. Using martingale convergence arguments, we prove that the stochastic process still converges in total variation to a stationary limiting distribution, although this convergence is no longer exponential. In contrast, convergence in the $L^1$-Wasserstein distance fails in general. This phenomenon reflects the different sensitivities of the two metrics on an unbounded state space: the total variation distance measures discrepancies in probability mass, whereas the Wasserstein distance additionally captures the spatial cost of transporting that mass. Consequently, a small amount of probability mass escaping toward increasingly remote states may become negligible in total variation distance while remaining significant in the Wasserstein distance. Therefore, the regularity condition represents a genuine qualitative boundary separating distinct stochastic relaxation mechanisms, rather than merely a technical assumption.

We have also characterized the stochastic dynamics in the two regimes where the corresponding deterministic trajectories diverge using Lyapunov function techniques. When $\lambda_{\max}>0$, every interior state of the stochastic system is transient. Starting from an interior state, the process either escapes to infinity or, when the boundary is accessible, is eventually absorbed at the boundary of the state space. When $\lambda_{\max}=0$ but the zero eigenvalue of $A$ is not semisimple or $b$ is not orthogonal to the left nullspace of $A$, every interior state is either transient or null recurrent. In both cases, no closed irreducible positive recurrent class can contain an interior state, and any stationary distribution, if it exists, must be supported on the boundary. These results demonstrate that deterministic divergence does not necessarily imply stochastic divergence: extinction and boundary absorption may instead dominate the long-time behavior of the stochastic system.

Several important questions remain open. First, in the irregular critical regime, it would be desirable to establish quantitative subexponential convergence rates in total variation and to identify structural criteria that determine these rates. Second, in the divergent regimes, a more refined analysis of the boundary dynamics could characterize the probabilities of escape and absorption, classify the closed classes contained in the boundary, and determine the conditions under which the time-dependent distribution converges to a boundary-supported limiting distribution. Third, although the present classification applies to linear reaction networks, many reaction systems are only asymptotically linear or involve nonlinear reactions. Extending the spectral and coupling framework developed here to broader classes of stochastic reaction networks would provide a natural direction for future research.

\section*{Acknowledgements}
B. W. acknowledges support from the National Natural Science Foundation of China (NSFC) under grant No. 12601957. C. J. acknowledges support from NSFC under grant Nos.~12526215, T2541031, and 12271020. During the preparation of this manuscript, the authors used ChatGPT to improve the exposition and presentation and to assist in reviewing the internal consistency of selected proofs. The authors independently checked the mathematical arguments and reviewed all AI-assisted revisions. The authors assume responsibility for all content.

%
%
%
%
%
%

\setlength{\bibsep}{5pt}
\footnotesize\bibliographystyle{nature}
\bibliography{myref}

\end{document}